\theoremstyle{plain}
\newtheorem{theorem}{Theorem}[section]
\newtheorem{proposition}[theorem]{Proposition}
\newtheorem{lemma}[theorem]{Lemma}
\newtheorem{remark}[theorem]{Remark}
\newtheorem{corollary}[theorem]{Corollary}
\theoremstyle{definition}
\newtheorem{definition}[theorem]{Definition}
\newcommand{\nc}{\newcommand}
\nc{\dmo}{\DeclareMathOperator}
\nc{\Q}{\mathbb{Q}}
\nc{\R}{\mathbb{R}}
\nc{\RP}{\mathbb{RP}^1}
\nc{\Z}{\mathbb{Z}}
\nc{\ZZ}{\mathbb{Z}}
\nc{\C}{\mathbb{C}}
\nc{\cS}{\mathcal{S}}
\nc{\iso}{\cong}
\dmo{\Mod}{Mod}
\dmo{\Ig}{\mathcal{I}_g}
\dmo{\Span}{span}
\dmo{\Diff}{Diff}
\dmo{\Homeo}{Homeo}
\dmo{\dist}{dist}
\dmo\BDiff{BDiff}
\dmo\SO{SO}
\dmo\slide{sl}
\dmo\im{im}
\dmo\id{id}
\dmo\Fix{Fix}
\dmo\Stab{Stab}
\dmo\Mcg{Mcg}
\dmo{\Hg}{\mathcal{H}_g}
\dmo{\Tg}{\mathcal{T}_g}
\renewcommand{\epsilon}{\varepsilon}
\nc{\coloneq}{\mathrel{\mathop:}\mkern-1.2mu=}
\nc{\margin}[1]{\marginpar{\scriptsize #1}}
\nc{\para}[1]{\bigskip\noindent\textbf{#1}}
\begin{document}
\title{Rigidity and Flexibility for Handlebody Groups}
\author{Sebastian Hensel}
\date{\today}
\begin{abstract}
  We show that finite index subgroups of the handlebody group are rigid
  in their ambient mapping class group: 
  any injective map of a finite index subgroup of the genus
  $g$ handlebody group into the genus $g$ mapping class group is conjugation
  by a mapping class group element. 

  On the other hand, we construct an injection of the genus $g$
  handlebody group into a genus $h>g$ mapping class group which is not
  conjugate into a handlebody group.
\end{abstract}
\maketitle

\section{Introduction}
\label{sec:intro}

Homomorphisms, and in particular injections, between mapping class
groups have received considerable attention over the last years. See
\cite{AS} for a survey, and e.g.  \cite{ALS, AS2, HK, I, IMcC, K,
  McC2} for examples of results. A guiding theme in this subject is to
try and imitate (super)rigidity results from the theory of
lattices in Lie groups.  For example, under suitable complexity
bounds, the only injections between mapping class groups arise from 
``obvious'' topological operations on surfaces.

\smallskip In this article we investigate rigidity phenomena from a
slightly different point of view. Namely, we let the mapping class group
play the role of the ``ambient Lie group'', and show rigidity of
subgroups. To be precise, by rigidity we here mean the following.
\begin{definition}
  Let $\Gamma$ be a subgroup of the mapping class group $\mathrm{Mcg}(\Sigma_g)$ of a closed genus $g$ surface $\Sigma_g$. We say that
  $\Gamma$ is \emph{rigid in $\mathrm{Mcg}(\Sigma_g)$} if every
  injective map $f: \Gamma \to \mathrm{Mcg}(\Sigma_g)$ is (the
  restriction of) an inner automorphism of $\mathrm{Mcg}(\Sigma_g)$.
\end{definition}
We focus on an important, topologically motivated subgroup of $\mathrm{Mcg}(\Sigma_g)$, namely the the \emph{handlebody group}
$\mathcal{H}_g<\mathrm{Mcg}(\Sigma_g)$. It consists of all those mapping
classes which extend to a given handlebody $V$ with boundary
$\Sigma_g$.  We show.
\begin{theorem}[Rigidity]\label{thm:rigidity-handlebody-main}
  Suppose that $\Gamma < \mathcal{H}_g$ is a finite index subgroup. Then 
  $\Gamma$ is rigid in $\mathrm{Mcg}(\Sigma_g)$.
\end{theorem}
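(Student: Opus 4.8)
The plan is to run Ivanov's paradigm for mapping class group rigidity, but with the handlebody group supplying only the Dehn twists about meridians (curves bounding disks in $V$) rather than all curves. Fix the injection $f:\Gamma\to\Mcg(\Sigma_g)$. Since $\Gamma$ has finite index, there is a uniform $N$ with $T_c^N\in\Gamma$ for every meridian $c$. The goal is to produce $\phi\in\Mcg(\Sigma_g)$ with $f(\gamma)=\phi\gamma\phi^{-1}$ for all $\gamma\in\Gamma$, and the strategy is: (i) show $f$ carries each meridian twist power to a power of a Dehn twist; (ii) assemble the resulting assignment $c\mapsto c'$ into a superinjective map of combinatorial curve data; (iii) rigidify this to a homeomorphism $\phi$; and (iv) bootstrap from twists to all of $\Gamma$.

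For step (i) I would characterize powers of Dehn twists algebraically in a way preserved by injections into $\Mcg$. Commutation of $T_a^N, T_b^N$ detects disjointness of meridians $a,b$, so $f$ sends the free abelian twist group of a disjoint meridian system to a free abelian group of equal rank, whose generators are reducible (their canonical reduction systems are controlled by Birman--Lubotzky--McCarthy). The point is to show each $f(T_c^N)$ is a power of a single Dehn twist rather than a higher multitwist or a partial pseudo-Anosov: a genuine multitwist about $k\ge2$ curves has a strictly larger ``central twisting rank'' in its centralizer than a single twist, and this refinement is invisible for $f(T_c^N)$ because $c$ contributes a single commutation direction. Controlling roots here --- we only see $T_c^N$, not $T_c$ --- is the delicate part, and I would handle it through the centralizer and reduction-system calculus rather than by bare hands.

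Step (ii) is then formal: commutation-preservation gives that disjoint meridians map to disjoint curves, and tracking the braid (one--intersection) relations shows the pattern $i(c,d)=1$ is preserved, so $c\mapsto c'$ is a superinjective simplicial map $\Phi$ from the disk complex $\mathcal{D}(V)$ into the curve complex $\mathcal{C}(\Sigma_g)$. For step (iii) I would upgrade $\Phi$ to a homeomorphism: because every essential curve and every elementary move on $\Sigma_g$ is expressible through disjointness relations to meridians, $\Phi$ extends uniquely to a superinjective self-map of $\mathcal{C}(\Sigma_g)$, and the rigidity of the curve complex (superinjective maps are geometric) produces $\phi\in\Mcg^{\pm}(\Sigma_g)$ realizing $\Phi$; that $f$ sends positive twists to positive twist powers fixes the sign.

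Finally, replacing $f$ by $\gamma\mapsto\phi^{-1}f(\gamma)\phi$, I may assume $f(T_c^N)=T_c^N$ for every meridian $c$. For $\gamma\in\Gamma<\mathcal{H}_g$ the element $\gamma$ permutes meridians, so $\gamma(c)$ is again a meridian and $f(\gamma)T_c^Nf(\gamma)^{-1}=T_{\gamma(c)}^N=\gamma T_c^N\gamma^{-1}$; hence $\gamma^{-1}f(\gamma)$ commutes with $T_c^N$, i.e.\ fixes every meridian $c$. Since meridians fill $\Sigma_g$, the kernel of the action on $\mathcal{C}(\Sigma_g)$ is central, so $\gamma^{-1}f(\gamma)$ is trivial (accounting separately for the hyperelliptic center in low genus), giving that $f$ is conjugation by $\phi$, as desired. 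I expect the main obstacle to be steps (i) and (iii) together: the handlebody group only ``sees'' the meridians, a proper and twist-incomplete subset of all curves, so one must both extract honest twist powers from $f$ and show that the meridian data alone rigidifies the whole surface.
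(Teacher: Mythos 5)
Your skeleton (twist powers map to twist powers; assemble a superinjective map on meridian data; rigidify; bootstrap) is exactly the scheme the paper follows, and your step (iv) essentially matches the paper's final computation --- except that your closing appeal to ``meridians fill $\Sigma_g$, so the kernel of the action is central'' is the wrong statement: your element $\gamma^{-1}f(\gamma)$ is only known to fix meridians, not all curves, and fixing a filling collection does not by itself force triviality. The repair is the paper's: since $f(\gamma)$ agrees with $\gamma$ on all meridians, it preserves the set of meridians and hence lies in $\Hg$, and an element of $\Hg$ fixing every meridian is trivial (Lemma~\ref{lem:meridian-action}, proved via the complement of a reduced disk system). The serious gap, however, is step (iii). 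You assert that $\Phi\colon \mathcal{D}(V)\to\mathcal{C}(\Sigma_g)$ ``extends uniquely to a superinjective self-map of $\mathcal{C}(\Sigma_g)$'' because every curve is expressible through disjointness relations to meridians. This is unjustified, and no such canonical extension is available: a non-meridian is not determined by its disjointness pattern with meridians, and your data simply does not define $\Phi$ off the disk graph, so curve-complex rigidity cannot be invoked. This extension problem is precisely the new content of the paper: Theorem~\ref{thm:diskgraph-rigidity} proves directly that a superinjective $\mathcal{D}(V)\to\mathcal{C}(\Sigma)$ is induced by a mapping class, via a staged argument (reduced disk systems map to cut systems by a link-dimension count; images are normalized to reduced disk systems and propagated by connectivity of the reduced disk system graph; meridians map to meridians; local surjectivity via Bell--Margalit rigidity of $\mathcal{C}(\Sigma_{0,2g})$ on links), and then crucially invokes the theorem of \cite{KS} that simplicial automorphisms of the disk graph are induced by $\Hg$. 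Without something playing the role of \cite{KS}, your step (iii) does not go through.

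Step (i) is also a plan rather than a proof at exactly the point you flag as delicate. The centralizer-rank characterization of twist powers is calibrated for finite index subgroups of $\Mcg(\Sigma)$; here $\Gamma$ is of infinite index, and you can only probe centralizers with twist powers about meridians. Ruling out that $f(T_c^N)$ is a multitwist on several curves or has a pseudo-Anosov piece requires a genuine argument, and the paper devotes Section~\ref{sec:abundant} to it: fullness and abundance of $\Stab_\Gamma(S)$ on subsurfaces bounded by meridians (Lemma~\ref{lem:fi-implies-fullish}, using a meridian pants decomposition), Theorem~\ref{thm:complexity-reduction-kernel} locating a complementary component of the canonical reduction system carrying an injection, and an abelian-rank count forcing that component to be the complement of a single nonseparating curve (Corollary~\ref{cor:nonsep-twists-to-nonsep-twists}). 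Note this yields the conclusion only for nonseparating meridians; separating meridians need a separate step, which your sketch does not address --- the paper first sends a disjoint reduced disk system to a cut system (Corollary~\ref{cor:cut-to-cut}), obtains an injection into $\Mcg(\Sigma_{0,2g})$, and cites \cite{Amarayona-Souto-powers} for rigidity of groups generated by twist powers there. So: same paradigm as the paper, but both load-bearing steps are missing their substance, and the extension claim in (iii) is false as stated.
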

As a consequence we also obtain the following, which improves the main theorem of \cite{KS}.
\begin{corollary}\label{cor:comm-han}
  The abstract commensurator of $\mathcal{H}_g$ is equal to  $\mathcal{H}_g$.
\end{corollary}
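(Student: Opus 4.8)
The plan is to derive the Corollary from the Rigidity Theorem together with the fact that $\Hg$ is its own commensurator inside $\mathrm{Mcg}(\Sigma_g)$. Recall that there is a natural homomorphism $\iota\colon \Hg \to \mathrm{Comm}(\Hg)$ sending $h$ to the class of the conjugation automorphism $\gamma \mapsto h\gamma h^{-1}$, and that proving $\mathrm{Comm}(\Hg)=\Hg$ amounts to showing $\iota$ is an isomorphism. For injectivity, suppose $h\in\Hg$ represents the trivial class, so that conjugation by $h$ is the identity on some finite index subgroup $\Gamma<\Hg$. Then $h$ centralizes $\Gamma$. Since such a $\Gamma$ contains twists about a collection of meridians that fill $\Sigma_g$, its centralizer in $\mathrm{Mcg}(\Sigma_g)$ is trivial, and hence $h=\id$. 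Thus $\iota$ is injective, and the work is in proving surjectivity.

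For surjectivity, let a class in $\mathrm{Comm}(\Hg)$ be represented by an isomorphism $\phi\colon \Gamma_1 \to \Gamma_2$ between finite index subgroups $\Gamma_1,\Gamma_2<\Hg$. Composing with the inclusions $\Gamma_2 \hookrightarrow \Hg \hookrightarrow \mathrm{Mcg}(\Sigma_g)$, I regard $\phi$ as an injective homomorphism $\Gamma_1 \to \mathrm{Mcg}(\Sigma_g)$. Because $\Gamma_1$ is a finite index subgroup of $\Hg$, Theorem~\ref{thm:rigidity-handlebody-main} applies and exhibits $\phi$ as the restriction of an inner automorphism of $\mathrm{Mcg}(\Sigma_g)$: there is $h\in\mathrm{Mcg}(\Sigma_g)$ with $\phi(\gamma)=h\gamma h^{-1}$ for all $\gamma\in\Gamma_1$. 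Once I know in addition that $h\in\Hg$, the class of $\phi$ equals $\iota(h)$, which proves surjectivity; combined with injectivity this yields the Corollary.

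It therefore remains to promote $h\in\mathrm{Mcg}(\Sigma_g)$ to $h\in\Hg$, and this is the crux of the argument. I would use that $\Hg$ is precisely the stabilizer in $\mathrm{Mcg}(\Sigma_g)$ of the disk set $\mathcal{D}\subset\mathcal{C}(\Sigma_g)$ of meridians of $V$. By construction $\Gamma_2=h\Gamma_1 h^{-1}$. Since $\Gamma_1<\Hg$ preserves $\mathcal{D}$, its conjugate $\Gamma_2$ preserves $h\mathcal{D}$; on the other hand $\Gamma_2<\Hg$ preserves $\mathcal{D}$ itself. Hence the finite index subgroup $\Gamma_2$ stabilizes both $\mathcal{D}$ and $h\mathcal{D}$. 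The decisive input is the uniqueness of the invariant disk set: a finite index subgroup of $\Hg$ admits $\mathcal{D}$ as its \emph{only} invariant disk set. Applying this to $\Gamma_2$ forces $h\mathcal{D}=\mathcal{D}$, so that $h$ stabilizes $\mathcal{D}$ and therefore $h\in\Hg$, as required.

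I expect the genuine obstacle to be exactly this final uniqueness step, namely ruling out that conjugation transports the canonical disk set of $\Gamma_1$ to a second, distinct $\Gamma_2$-invariant disk set. This is a self-commensurating statement for $\Hg$ inside $\mathrm{Mcg}(\Sigma_g)$, and I anticipate that it is supplied by the same combinatorial machinery that characterizes the disk set group-theoretically in the proof of Theorem~\ref{thm:rigidity-handlebody-main}; everything else in the argument is formal once Rigidity and this self-commensurating property are available.
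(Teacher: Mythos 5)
Your architecture is the same as the paper's: apply the Rigidity Theorem to view the commensuration $\phi\colon\Gamma_1\to\Gamma_2$ as conjugation by some $h\in\mathrm{Mcg}(\Sigma_g)$, then promote $h$ to an element of $\Hg$. The genuine gap is the final ``uniqueness of the invariant disk set'' step, which you state but do not prove --- and which is not an auxiliary fact but is \emph{equivalent} to the assertion you are trying to establish. Indeed, since the stabilizer of $h\mathcal{D}$ in $\mathrm{Mcg}(\Sigma_g)$ is $h\Hg h^{-1}$, saying that a finite index subgroup $\Gamma_2<\Hg$ preserves no disk set other than $\mathcal{D}$ is a reformulation of: whenever $g^{-1}\Gamma_2 g<\Hg$ for $\Gamma_2<\Hg$ of finite index, then $g\in\Hg$. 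This is exactly the claim the paper isolates as the only thing left to prove after Theorem~\ref{thm:rigidity-main}, so your reduction defers precisely the crux. Moreover, your anticipation about where the missing proof comes from points at the wrong machinery: it is not the combinatorial rigidity of the disk graph (Theorem~\ref{thm:diskgraph-rigidity}), but Oertel's characterization of multitwists in the handlebody group.

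Here is how the paper closes the gap, in two lines. Given any meridian $\delta$, finite index of $\Gamma_1$ in $\Hg$ provides $n>0$ with $T_\delta^n\in\Gamma_1$; then $hT_\delta^n h^{-1}=T_{h(\delta)}^n\in\Gamma_2<\Hg$, and Corollary~\ref{cor:no-left-multitwists} (a product of left twists, here a single twist power, lies in $\Hg$ only if every twisting curve is a meridian) forces $h(\delta)$ to be a meridian. Thus $h$ maps the set of meridians into itself; in particular it carries a reduced disk system to a reduced disk system, and hence $h\in\Hg$ by the unnumbered lemma in Section~\ref{sec:handlebody-prereq}. This argument is exactly the proof of your uniqueness claim, so your proposal becomes complete once you substitute it; note that it needs only Theorem~\ref{thm:characterize-multitwists} and no part of the disk-graph rigidity apparatus. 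The rest of your write-up (injectivity of $\iota$ via triviality of centralizers of filling twist collections, and the application of Theorem~\ref{thm:rigidity-handlebody-main}) is correct and matches the paper, which simply compresses the formal commensurator bookkeeping into the single displayed claim.
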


The mapping class group itself, and its finite index subgroups are
rigid in all but a few exceptional low-complexity cases. These results
have a long history, starting with Ivanov's study of the automorphism group and commensurator
of the mapping class group \cite{Ivanov-Commensurator, Korkmaz-Commensurator}, whose methods
were later greatly extended (see \cite{Shackleton, IMcC,
  Bell-Margalit, Behrstock-Margalit} and the references therein).
Rigidity is also known for the group generated by powers of Dehn
twists \cite{Amarayona-Souto-powers}. In \cite{BM, Ki} it is shown
that the Johnson kernel is rigid inside the Torelli subgroup of the
mapping class group.

\bigskip 
We next study injections of $\mathcal{H}_g$ into higher genus mapping class groups.
Here, the situation is drastically different.
\begin{theorem}[Flexibility]\label{thm:flexibility-han}
  There is a finite index subgroup $\Gamma < \mathcal{H}_g$ and an injection
  $f:\Gamma \to \mathrm{Mcg}(\Sigma_h), h > g$, so that the image is
  not conjugate into $\mathcal{H}_h$. 
\end{theorem}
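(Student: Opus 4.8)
The plan is to realize $f$ as a lift through a finite cover and to obstruct conjugacy into $\mathcal{H}_h$ by a twisted homological computation. First I would fix a finite regular cover $p\colon \Sigma_h \to \Sigma_g$ with deck group $G$, defined by a surjection $\rho\colon \pi_1(\Sigma_g)\to G$, and set $\Gamma$ to be the stabilizer in $\mathcal{H}_g$ of the conjugacy class of $\ker\rho$ under $\mathcal{H}_g\to\mathrm{Out}(\pi_1)$; since there are only finitely many index-$|G|$ subgroups, $\Gamma$ is finite index, and every element of $\Gamma$ lifts. By Birman--Hilden theory (fixing a lift of a marked point, and passing to a further finite-index subgroup to kill the deck-group ambiguity) lifting defines an injective homomorphism $f\colon\Gamma\to\mathrm{Mcg}(\Sigma_h)$; injectivity here is routine. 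Writing $L_h=\ker\!\big(H_1(\Sigma_h;\Q)\to H_1(V_h;\Q)\big)$ for the handlebody Lagrangian, the crucial reduction is that if $c\,f(\Gamma)\,c^{-1}<\mathcal{H}_h$ then the symplectic image $\overline{f(\Gamma)}$ preserves the Lagrangian $c^{-1}L_h$, because handlebody classes preserve $L_h$. So it suffices to choose $p$ so that $\overline{f(\Gamma)}$ preserves \emph{no} Lagrangian of $H_1(\Sigma_h;\Q)$.

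The tempting single--Dehn-twist argument fails and points to the right mechanism. If a meridian $m$ has connected preimage it lifts to a single twist $T_{\tilde m}$, but then $[\tilde m]=p^![m]$ is a transfer class and the projection formula gives $\langle[\tilde m],[\tilde m']\rangle = d\,\langle[m],[m']\rangle = 0$ for any two meridians (all meridian classes lie in $L_g$). Instead I would use \emph{disconnected} lifts: a meridian $m$ with $\rho(m)$ trivial lifts to a $G$-orbit of disjoint curves $g^s\tilde m_0$, and $f$ sends a suitable power of $T_m$ to the symmetric multitwist $\prod_s T_{g^s\tilde m_0}$. Restricted to a nontrivial isotypic piece $V_\chi\subset H_1(\Sigma_h;\mathbb{C})$ this multitwist acts as a single transvection with direction $w_\chi(m)=\sum_s \overline{\chi(g)}^{\,s}[g^s\tilde m_0]\in V_\chi$.

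The heart of the matter is then that for two meridians $m,m'$ crossing on $\Sigma_g$ the pairing $\langle w_\chi(m), w_{\bar\chi}(m')\rangle$ is a \emph{twisted} intersection number, which can be made nonzero by choosing $\rho$ even though $\langle[m],[m']\rangle=0$; I would start from intersecting meridians (e.g.\ $m_1$ and a handlebody translate of $m_2$ pushed across $m_1$) and tune the cover on the crossing data. When this pairing is nonzero, the two transvections $T_m,T_{m'}$ restricted to $V_\chi\oplus V_{\bar\chi}$ have distinct invariant lines inside a symplectic plane, hence no common invariant Lagrangian there. The main obstacle is upgrading this to the global statement that $\overline{f(\Gamma)}$ stabilizes no Lagrangian of all of $H_1(\Sigma_h;\Q)$: one must control how a hypothetical invariant Lagrangian meets the isotypic decomposition, which I expect to require either irreducibility of $V_\chi$ as a $\Gamma$-module or producing enough lifted meridian multitwists that their joint action leaves no isotropic subspace invariant. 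This representation-theoretic step, together with the explicit cyclic-cover computation certifying a nonzero twisted intersection, is where the real work lies; the construction of $f$ and its injectivity are comparatively routine.
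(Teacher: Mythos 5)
Your route is genuinely different from the paper's, so let me first situate it. The paper obstructs conjugacy at the level of the multitwists themselves: by Oertel's theorem (Theorem~\ref{thm:characterize-multitwists} and Corollary~\ref{cor:no-left-multitwists}), a product of \emph{left} Dehn twists lies in a handlebody group only if the twisting curves are meridians or pair up as boundaries of embedded annuli with opposite handedness; for an explicit degree-$3$ cyclic cover the contradiction then comes from parity (three lifts, so annulus pairs cannot account for all curves) together with the fact that two meridians can never intersect exactly once (a consequence of Lemma~\ref{lem:wave}). You instead propose the weaker but still valid necessary condition that the symplectic image preserve a Lagrangian. Several of your local computations are correct: connected preimages are indeed useless by the projection formula; a lifted twist does act on each isotypic piece $V_\chi$ as a single skew-transvection in the direction $w_\chi(m)$; and the twisted pairing $\langle w_\chi(m),w_{\bar\chi}(m')\rangle$ \emph{can} be made nonzero --- in fact the paper's mod-$3$ cover already does it: there each lift of $\delta_1$ meets exactly two lifts of $\delta_2$ once each with opposite signs (row sums vanish since $p_*a_j=[\delta_1]$ pairs trivially with $[\delta_2]$), so the pairing matrix is the circulant $\pm(I-P)$, invertible on every nontrivial character.

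That said, the writeup has two genuine gaps, the second of which you flag but which is worse than "remaining work." First, the nonvanishing of the twisted intersection number is only asserted ("tune the cover"): no cover is exhibited and no computation is done, and this is the engine of the argument. Second, the bridge from "no common invariant Lagrangian in $V_\chi\oplus V_{\bar\chi}$" to "no invariant Lagrangian in $H_1(\Sigma_h;\Q)$" fails as stated: a Lagrangian of the full space may meet the symplectic subspace $V_\chi\oplus V_{\bar\chi}$ trivially, so the two-transvection analysis there constrains nothing; moreover, since you deliberately passed to a subgroup avoiding deck transformations, $\overline{f(\Gamma)}$ only \emph{normalizes} the $G$-action, so a hypothetical invariant Lagrangian need not be compatible with the isotypic decomposition at all. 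On the full rational homology the lifted twist is a multitransvection $x\mapsto x+\sum_s\langle x,a_s\rangle a_s$, and invariance of $L$ under it does not split into "each $a_s\in L$ or $L\subset a_s^{\perp}$." I believe this is repairable without isotypic pieces: writing $N_i=\tilde T_i-\mathrm{id}$, invariance gives $N_iL\subset L$ with image in the isotropic span of the lifts; because all twists are left-handed, $\langle v,N_2v\rangle=\sum_k\langle v,b_k\rangle^2$ is a sum of rational squares, which must vanish for $v\in L$, and combined with invertibility of the pairing matrix off the trivial character this should force $L$ into a common perp whose maximal isotropics are too small --- but note that this repair secretly reuses exactly the handedness and nondegeneracy inputs that drive the paper's parity argument, and none of it is in your proposal. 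As written, the construction of $f$ is fine but the decisive obstruction is not established.
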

The example from Theorem~\ref{thm:flexibility-han} comes from a covering construction,
and we can completely characterize rigidity and flexibility for such injections.
\begin{theorem}[Covers]\label{thm:characterize-covers}
  Suppose that $\Sigma' \to \Sigma$ is a finite normal cover of a
  surface of genus $g\geq 3$. Let $\Gamma < \mathcal{H}_g$ be a finite
  index subgroup of mapping classes which lift to $\Sigma'$. Denote by
  $\Gamma'$ a finite index subgroup of the lifts of elements in
  $\Gamma$.

  Then $\Gamma'$ is conjugate into a handlebody group of $\Sigma'$ if and only
  if $\Sigma' \to \Sigma$ can be extended to a cover of handlebodies.
\end{theorem}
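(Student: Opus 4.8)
The plan is to translate ``extends to a cover of handlebodies'' into a single algebraic condition and then reduce everything to \emph{detecting meridians by Dehn twists}. Write $K=\ker(\pi_1\Sigma\to\pi_1 V)$ for the disk subgroup of $V$ (the normal closure of the boundaries of a cut system of meridians), let $N=p_*\pi_1\Sigma'\triangleleft\pi_1\Sigma$ be the subgroup defining the cover, and $G=\pi_1\Sigma/N$ the deck group. A finite cover $V'\to V$ restricting to $\Sigma'\to\Sigma$ corresponds to a subgroup of $\pi_1 V=\pi_1\Sigma/K$ whose preimage in $\pi_1\Sigma$ is $N$, and such a $V'$ exists (and is automatically a handlebody, as finite covers of handlebodies are) precisely when $K\subseteq N$; equivalently, when every meridian of $V$ lies in $N$, i.e.\ lifts to $|G|$ disjoint closed curves. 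I will use throughout the criterion that a power $T_c^k$ ($k\neq 0$) lies in a handlebody group $\mathcal{H}(W)$ if and only if $c$ bounds a disk in $W$, and more generally that a nonzero multitwist lies in $\mathcal{H}(W)$ iff each of its curves is a meridian of $W$; thus meridians are exactly the curves detected by twist powers in the group.

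For the ``if'' direction I assume $K\subseteq N$, set $H=N/K<\pi_1 V$, and let $V'\to V$ be the associated cover, which restricts to $\Sigma'\to\Sigma$ on the boundary. Any homeomorphism of $V$ extending an element $\gamma\in\mathcal{H}_g$ preserves $K$, and if $\gamma$ lifts to $\Sigma'$ (so $\gamma_*N=N$) it therefore preserves $H=N/K$ and lifts to $V'$; the deck transformations of $\Sigma'\to\Sigma$ likewise extend over $V'$. Hence every lift of every element of $\Gamma$ extends over $V'$, so the full group $\tilde{\Gamma}$ of lifts, and in particular $\Gamma'$, lies in $\mathcal{H}(V')$. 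This direction is routine.

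For the ``only if'' direction, conjugate so that $\Gamma'\subset\mathcal{H}(W)$ for some handlebody $W$ with $\partial W=\Sigma'$. Given a meridian $c$ of $V$ of order $d$ in $G$, the twist $T_c^{d}$ lifts to the multitwist along the components of $p^{-1}(c)$; a suitable power lies in $\Gamma'\subset\mathcal{H}(W)$, so by the multitwist criterion every component of $p^{-1}(c)$ is a meridian of $W$. Next, let $\Gamma_0\triangleleft\tilde{\Gamma}$ be the (finite-index) normal core of $\Gamma'$, so $\Gamma_0\subset\mathcal{H}(W)$. Since $G$ normalizes $\tilde{\Gamma}$ and preserves $\Gamma_0$, we get $\Gamma_0\subset\mathcal{H}(g\cdot W)$ for every $g\in G$; because $\Gamma_0$ contains multitwists along the preimages of a cut system of $V$ (a filling multicurve on $\Sigma'$), it lies in a \emph{unique} handlebody group, forcing $g\cdot\mathcal{D}(W)=\mathcal{D}(W)$. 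Thus the disk set of $W$ is $G$-invariant, $G\subset\mathcal{H}(W)=\Mcg(W)$, and the disk subgroup $K_W$ is normal in all of $\pi_1\Sigma$, not merely in $\pi_1\Sigma'$.

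It remains to realize $G$ geometrically: I would apply three-dimensional Nielsen realization together with the equivariant loop/Dehn theorem of Meeks--Yau to promote the finite subgroup $G<\Mcg(W)$ to an honest finite action on $W$, whose boundary restriction is (after conjugacy) the free deck action. The main obstacle is precisely to show that this action is free on all of $W$ --- that there are no interior singular circles, which is exactly what separates a genuine cover from a branched one; here one cuts $W$ along a $G$-invariant family of disks bounded by the filling meridian system found above and rules out interior fixed loci. Granting freeness, $V''=W/G$ is a handlebody with $\partial V''=\Sigma$ and $W\to V''$ extends $\Sigma'\to\Sigma$. Finally, each $\gamma\in\Gamma$ extends $G$-equivariantly over $W$, hence descends over $V''$, so $\Gamma\subset\mathcal{H}(V'')$; as the liftable classes form a finite-index subgroup of $\mathcal{H}_g$, the group $\Gamma$ has finite index in $\mathcal{H}_g=\mathcal{H}(V)$, and the twist criterion shows every $V$-meridian is a $V''$-meridian. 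A $V$-cut system is then a $V''$-cut system, so $V''=V$, the cover $W\to V$ is a cover of handlebodies, and $K\subseteq N$, as desired.
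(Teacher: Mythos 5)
Your easy direction and your first reduction agree with the paper: the paper also observes that a homeomorphism of $V$ whose boundary restriction lifts extends over any handlebody cover, and it also uses lifts of twist powers to show that if $\Gamma'$ sits in a handlebody group $\mathcal{H}(W)$, then every elevation of every $V$-meridian is a $W$-meridian. One caution already here: your stated criterion that ``a nonzero multitwist lies in $\mathcal{H}(W)$ iff each of its curves is a meridian'' is false in general --- by Oertel's theorem (Theorem~\ref{thm:characterize-multitwists}) a left/right twist pair about the boundary of a properly embedded annulus also lies in $\mathcal{H}(W)$, and this is exactly what drives the paper's flexibility example. The criterion is only valid for \emph{same-signed} multitwists (Corollary~\ref{cor:no-left-multitwists}); your application is to lifts of $T_c^d$, which are same-signed, so the step survives, but the criterion must be stated correctly. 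Similarly, the preimage of a cut system does \emph{not} fill $\Sigma'$ (a cut system does not even fill $\Sigma$); for your uniqueness-of-$W$ argument you need to take a larger finite family of meridians that fills, which is available since $\Gamma$ contains twist powers about all meridians.

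The genuine gap is at the decisive step, and you flag it yourself with ``Granting freeness.'' After arranging that the deck group $G$ preserves the disk set of $W$ and invoking equivariant realization to get an honest $G$-action on $W$ extending the deck action on $\partial W$, everything hinges on showing this action is free in the interior. That is not a technical remainder: it is the entire content of the hard direction. Boundary-freeness does not imply interior freeness --- rotate $S^1\times D^2$ by $2\pi/n$ in the disk factor: the boundary action is free, the core circle is fixed, and the quotient map is a branched (not honest) cover of solid tori. Your proposed remedy (``cut along a $G$-invariant disk family and rule out interior fixed loci'') does not obviously work: cutting along an equivariant filling disk system leaves balls, and a ball with nontrivial stabilizer always has interior fixed points, so freeness amounts to showing all ball stabilizers are trivial --- which is essentially equivalent to the paper's condition that every meridian lifts with degree $1$. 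The paper proves precisely this implication (elevations of meridians being meridians forces degree-$1$ lifting) by a completely different, two-dimensional argument: waves between meridian systems (Lemma~\ref{lem:wave}) rule out bad configurations (Lemma~\ref{lem:recur-contradiction}, where the hypothesis $g\geq 3$ is used), the kernel $K$ is shown to have cyclic image $\langle m\rangle$ in $G$, and a homological intersection-number induction on the intermediate cover $\Sigma'/\langle m\rangle$ (Lemmas~\ref{lem:sign-swaps} and~\ref{lem:induct-intersect}, plus Corollary~\ref{cor:one-is-enough}) produces a degree-$1$ lifting meridian. Note that the genus hypothesis is invisible in your sketch --- if your outline were complete it would apparently prove the statement for $g=2$ as well, which the paper does not claim; this is a further sign that the missing freeness argument is where the real difficulty, and the genus assumption, must enter.
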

The genus restriction in this theorem is likely not required, and an artifact of our proof.

In the course of the proof of
Theorem~\ref{thm:rigidity-handlebody-main} we show rigidity for a
different group. The \emph{twist group} $\mathcal{T}_g<\mathcal{H}_g$
is the subgroup generated by Dehn twists about meridians of a
handlebody. It is known to be of infinite index, not finitely
generatable, and with infinite rank first homology \cite{McC}.
Nevertheless, rigidity holds:
\begin{theorem}
  Suppose that $\Gamma < \mathcal{T}_g$ is a finite index subgroup. Then 
  $\Gamma$ is rigid in $\mathrm{Mcg}(\Sigma_g)$. The commensurator of $\Tg$
  is the handlebody group $\Hg$.
\end{theorem}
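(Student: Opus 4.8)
The plan is to run an Ivanov-style rigidity argument, but driven by the \emph{disk graph} $\mathcal{D}(V)$ — whose vertices are isotopy classes of meridians and whose edges record disjointness — in place of the full curve graph. Given an injection $f:\Gamma\to\Mcg(\Sigma_g)$, the first step is to recognize meridian twists algebraically. Since $[\Tg:\Gamma]<\infty$, for every meridian $a$ some power $T_a^{k_a}$ lies in $\Gamma$, and I want a purely group-theoretic description of these powers that $f$ is forced to respect. Following the standard recognition principles for Dehn twists in the mapping class group (via the structure of centralizers and the lattice of commuting root-free elements), I would characterize the $T_a^{k_a}$ inside $\Gamma$ and deduce that each $f(T_a^{k_a})$ is again a power of a Dehn twist, about some curve $c_a$ on $\Sigma_g$.

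The second step is combinatorial. Injectivity of $f$ forces it to preserve both commutation and non-commutation of meridian twists, and two meridian twists commute exactly when the meridians can be realized disjointly. Hence $a\mapsto c_a$ defines a \emph{superinjective} simplicial map $\Phi:\mathcal{D}(V)\to\mathcal{C}(\Sigma_g)$. The crux is then a rigidity statement for the disk graph: every superinjective map $\mathcal{D}(V)\to\mathcal{C}(\Sigma_g)$ is the restriction of a homeomorphism of $\Sigma_g$; concretely, there is $\psi\in\Mcg(\Sigma_g)$ with $c_a=\psi(a)$ for all meridians $a$, and in particular each $c_a$ is itself a meridian. Replacing $f$ by $\psi^{-1}f(\,\cdot\,)\psi$, I may assume $\Phi=\id$, i.e. $f(T_a^{k_a})=T_a^{m_a}$ for each $a$. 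A routine argument using the commutation and braid relations among meridian twists, together with injectivity, then pins down the exponents and forces $f$ to be the identity, so that the original $f$ is conjugation by $\psi$. This establishes rigidity.

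For the commensurator statement, note first that $\Hg$ normalizes $\Tg$: a handlebody homeomorphism carries meridians to meridians and hence conjugates meridian twists to meridian twists, so $\Hg\subseteq\mathrm{Comm}(\Tg)$. Conversely, any commensuration is an isomorphism between finite-index subgroups $\Gamma_1,\Gamma_2<\Tg$, which by the rigidity part is conjugation by some $\psi\in\Mcg(\Sigma_g)$. Since $\psi T_a^{k}\psi^{-1}=T_{\psi(a)}^{k}\in\Tg$, and the only Dehn twist powers contained in $\Tg$ are those about meridians (a fact I would extract from the structure of $\Tg$), the element $\psi$ preserves the set of isotopy classes of meridians; as $\Hg$ is exactly the stabilizer of the meridian set in $\Mcg(\Sigma_g)$, this forces $\psi\in\Hg$. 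Hence $\mathrm{Comm}(\Tg)=\Hg$.

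The main obstacle is concentrated in the first two steps. Because $\Tg$ is not finitely generated, has infinite-rank first homology, and sits inside $\Hg$ with infinite index, the usual centralizer characterizations of Dehn twists must be adapted with care to $\Gamma$ before one can conclude that $f(T_a^{k_a})$ is a twist power at all; the absence of finite generation is where I expect the recognition step to be most delicate. The second essential input is the disk-graph rigidity used in realizing $\Phi$ by a mapping class — in particular the guarantee that the image curves $c_a$ are genuinely meridians and not arbitrary curves, so that $\Phi$ cannot escape the combinatorial pattern of a disk graph. Securing that geometric statement is the other point I expect to require real work.
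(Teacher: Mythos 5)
Your skeleton coincides with the paper's: twist powers map to twist powers, disjointness/commutation gives a superinjective map $\Phi:\mathcal{D}(V)\to\mathcal{C}(\Sigma_g)$, disk-graph rigidity produces the candidate conjugation, and the commensurator statement follows from meridian preservation (your last paragraph is essentially the paper's proof, with the needed fact that a twist power in $\Hg$ forces the curve to be a meridian supplied by Corollary~\ref{cor:no-left-multitwists}). The genuine gap is in your first step, and it is not a matter of adapting the standard recognition lemmas ``with care'': the centralizer characterizations of twist powers (Ivanov, Bell--Margalit) characterize twists \emph{inside finite index subgroups of $\Mcg(\Sigma_g)$}, where the centralizer of $T_a^{k}$ taken in the subgroup is commensurable to the ambient centralizer and contains maximal-rank twist flats. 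Your $\Gamma$ sits in $\Tg$, which has infinite index in $\Hg$ (let alone $\Mcg(\Sigma_g)$), is not finitely generated, and contains no twists about non-meridians; centralizers computed inside $\Gamma$ therefore do not have the algebraic profile those lemmas detect, and no such characterization is available. The paper's Section~\ref{sec:abundant} exists precisely to bypass this: it never identifies twists algebraically, but instead analyzes the canonical reduction system of the \emph{image} $f(\Stab_\Gamma(S))$ for $S$ the complement of a nonseparating meridian, using \emph{abundance} --- a rank-$(\xi(\Sigma_g)-1)$ free abelian group of twists about a pants decomposition of $S$ by meridians, which exists because $S$ is bounded by meridians (Lemma~\ref{lem:fi-implies-fullish}) --- and a complexity count to force the image reduction system to be a single curve, whence $f(T_\gamma^n)$ is a twist power (Theorem~\ref{thm:complexity-reduction-kernel}, Corollary~\ref{cor:nonsep-twists-to-nonsep-twists}). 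Note moreover that this only treats nonseparating meridians; separating meridians require a separate argument (cut along a reduced disk system via Corollary~\ref{cor:cut-to-cut} and invoke Aramayona--Souto \cite{Amarayona-Souto-powers} in $\Mcg(S_{0,2g})$), a case your sketch does not distinguish.

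Two further points. Your endgame ``using the commutation and braid relations among meridian twists'' cannot run as stated: the $\Z/2$--homology classes of meridians lie in the kernel of $H_1(\Sigma_g;\Z/2)\to H_1(V;\Z/2)$, which is isotropic for the intersection pairing, so any two meridians intersect in an \emph{even} number of points and no two meridian twists satisfy a braid relation; besides, relations among the recognized twist powers cannot control $f$ on a general $\phi\in\Gamma$, since such $\phi$ are not words in these twists in any controlled way. The paper instead applies $f$ to $\phi T_\delta^{n}\phi^{-1}=T_{\phi(\delta)}^{n}$, deduces $f(\phi)(\delta)=\phi(\delta)$ for every meridian $\delta$ via Lemma~\ref{lem:dehn-equal}, and concludes $f(\phi)=\phi$ because only the identity fixes every meridian (Lemma~\ref{lem:meridian-action}); no pinning down of exponents is needed. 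Finally, you correctly isolate disk-graph rigidity (that $\Phi$ is induced by a mapping class, so the $c_a$ are genuinely meridians), but this is itself a substantial theorem --- the paper's Theorem~\ref{thm:diskgraph-rigidity}, proved by showing reduced disk systems map to cut systems and invoking the main result of \cite{KS} --- and in your proposal it remains an unproved black box.
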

The flexibility exhibited in Theorem~\ref{thm:flexibility-han}, and restrictions for covering
constructions as in Theorem~\ref{thm:characterize-covers} is also already true for
$\mathcal{T}_g$.

\subsection*{Methods of Proof}
The argument which is used to show rigidity results on injections $f$
between subgroups of mapping class groups goes back to Ivanov.
It has by now become somewhat standard, and consists of three main steps. 
First, one shows that powers of Dehn twists
map under $f$ to (roots of) multitwists. In this way one obtains a
map between curve graphs (or related objects). Then, one uses rigidity
results for maps between curve graphs to find a candidate conjugation
map which $f$ will be equal to. Checking this equality in a third step is then
usually straightforward.

For finite index subgroups of $\mathrm{Mcg}(\Sigma)$, the first step is well-known and due to
Ivanov (see \cite{Bell-Margalit, Irmak-I} for well-written
modern treatments of this argument). A key ingredient in his proof is that one can characterize (powers of)
Dehn twists in the mapping class group via ranks of maximal Abelian subgroups of
centralizers and centralizers of centralizers. 

In Section~\ref{sec:abundant}, we develop a variant of Ivanov's
argument, which may be of interest in studying the rigidity of other
subgroups of the mapping class group. It bypasses an explicit
identification of Dehn twists via their centralizers and also tries to
avoid using maximal Abelian subgroups as much as possible. A
reader experienced with arguments of this type who is only interested in the
handlebody group may skip directly to Section~\ref{sec:disk-rigidity}.

We want to emphasize that there is an alternative approach to this first step due to 
Aramayona--Souto \cite{AS} which would work (with minor modifications) also for the 
handlebody group (but, to the knowledge of the author, not the twist group, 
since it has infinite rank first rational cohomology
\cite{McC}).

\smallskip 
The second and third steps of the proof require new arguments in the case of
the handlebody group. In Section~\ref{sec:disk-rigidity}, we show that the disk graph of
a handlebody is rigid inside the curve graph of the surface (compare also
\cite{Amarayona-Souto-powers} for rigidity of subgraphs of the curve graph). 
This is then used to 
find the candidate conjugation, relying on the main result of \cite{KS}.

\smallskip
In Section~\ref{sec:flexible}, we prove the Flexibility and Covering theorems~\ref{thm:flexibility-han} and~\ref{thm:characterize-covers}. The proofs rely on two main ingredients: on the 
one hand, a theorem of Oertel \cite{O} characterizes which multitwists on the boundary of 
a handlebody extend to homeomorphisms of that handlebody. This allows to translate
the condition of lifts being conjugate into $\Hg$ into a condition on lifts of meridians.
Careful analysis of how intersection patterns between meridians behave under lifting
is then used to show the results.

\subsection*{Acknowledgments} The author would like Juan Souto and Dan Margalit for
enlightening discussions on rigidity of subgroups of mapping class groups. Furthermore, we would like to thank Harry Baik for
interest (and patience) during numerous discussions.

\section{Preliminaries}
\label{sec:prereqs}
In this section we collect some well-known facts that we will use
throughout.  A few conventions: all \emph{curves} will be simple,
closed and essential. When not explicitly stated otherwise, we will
identify curves with their isotopy classes. By \emph{disjointness} of
two curves we always mean disjointness up to
homotopy. \emph{Multicurves} are collections of disjoint curves, no two
of which are freely homotopic.

\subsection{Canonical Reduction Systems and Centralizers}
\label{sec:reduction}
Let $\Sigma$ be a surface of finite type, possibly with boundary
and/or marked points. We let $\mathrm{Mcg}(\Sigma)$ denote the mapping class
group, i.e. the group of homeomorphisms of $\Sigma$ up to isotopy.  
Given a mapping class $\phi \in
\mathrm{Mcg}(\Sigma)$ we say that $\phi$ is \emph{reducible} if there
is some multicurve on $\Sigma$ which is (set-wise) preserved by
$\phi$. The mapping class $\phi$ is \emph{pure} if there is a
multicurve $C$ so that $\phi$ preserves every component of $C$, and
induces on each component of $S-C$ either the identity, or a
pseudo-Anosov map.  If $\phi$ is pure, then the \emph{canonical
  reduction system} $C(\phi)$ is the (unique) smallest such
multicurve. If $\phi$ is pseudo-Anosov, we set $C(\phi) = \emptyset$.

The following is due to Ivanov (compare e.g. \cite[Theorem~1.2]{I}).
\begin{proposition}
  There is a finite index subgroup $\Gamma_p < \mathrm{Mcg}(\Sigma)$
  so that every reducible element in $\Gamma_p$ is pure.
\end{proposition}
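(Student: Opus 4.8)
The plan is to realize $\Gamma_p$ as a level-$m$ congruence subgroup and to verify purity by homological bookkeeping. Fix an integer $m\geq 3$ and let $\Gamma_p$ be the kernel of the natural action of $\mathrm{Mcg}(\Sigma)$ on $H_1(\Sigma;\Z/m)$, where, if $\Sigma$ has marked points or boundary, I additionally record the induced permutation of the marked points (and cap off boundary so that the coefficient module is finite). Since $\mathrm{Aut}\big(H_1(\Sigma;\Z/m)\big)$ and the symmetric group on the marked points are both finite, $\Gamma_p$ has finite index. It then suffices to show that every $\phi\in\Gamma_p$ is pure, i.e. that $\phi$ fixes each component of its canonical reduction system $C(\phi)$ and induces the identity or a pseudo-Anosov map on every piece of the complement.

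First I would dispose of the periodic phenomena. A Minkowski--Serre type lemma shows that a finite-order automorphism of a finitely generated free abelian group congruent to the identity modulo $m\geq 3$ is already the identity; applied to the action on integral homology this shows that a torsion element of $\Gamma_p$ acts trivially on $H_1(\Sigma;\Z)$, and since a nontrivial finite-order mapping class acts nontrivially on (rational) homology — seen via Nielsen realization and Riemann--Hurwitz, equivalently the fact that the Torelli group is torsion-free — we conclude that $\Gamma_p$ is torsion-free. Next I would show that $\phi$ fixes each component of $C(\phi)$ rather than permuting them nontrivially: a nontrivial orbit of reduction curves should be detectable on homology and would force a nontrivial action on $H_1(\Sigma;\Z/m)$. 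The subtle point is that reduction curves may be separating, hence homologically trivial, so a permutation need not be visible directly on $H_1(\Sigma)$; this is where one must instead work with the homology of the complementary subsurfaces (equivalently, cut along $C(\phi)$ and cap off, or pass to the double) and track how $\phi$ permutes the resulting pieces and their fundamental classes.

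With the reduction curves individually fixed, it remains to analyze the map $\phi_i$ induced on each complementary piece $S_i$. By definition of the canonical reduction system each $\phi_i$ is periodic or pseudo-Anosov, so I only need to exclude a nontrivial periodic $\phi_i$. For this I would argue that triviality of $\phi$ on $H_1(\Sigma;\Z/m)$ descends to triviality of $\phi_i$ on the mod-$m$ homology of $S_i$ (again after capping the boundary curves, which are fixed by the previous step), and then reapply the Minkowski--Serre argument on the piece. I expect the main obstacle to be precisely this passage between global and local homological information: cutting along separating or non-primitive reduction curves does not obviously preserve the ``level-$m$'' condition, so making the restriction rigorous requires a careful choice of coefficients and capping (or a Mayer--Vietoris accounting) ensuring that no homological information is lost when $\Sigma$ is decomposed along $C(\phi)$.
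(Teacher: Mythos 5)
Your choice of $\Gamma_p$ is exactly the right one, and in fact coincides with the source: the paper does not prove this proposition at all but quotes it from Ivanov, and Ivanov's proof is precisely via the level-$m$ congruence subgroup, $m \geq 3$. Your torsion-freeness step is also correct and standard (Serre's lemma to get trivial action on integral homology, then torsion-freeness of the Torelli group). The difficulty is that everything after that point in your writeup is a program rather than an argument, and the two steps you defer — with phrases like ``I would argue'' and ``I expect the main obstacle to be'' — are exactly where the content of the theorem lies.

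Concretely, two gaps. First, your claim that a nontrivial orbit of reduction curves ``should be detectable on homology'' fails as stated even in the nonseparating case, not just the separating one you flag: disjoint nonseparating curves in a single $\phi$-orbit can all represent the same class in $H_1(\Sigma;\Z/m)$, so the induced permutation of homology classes can be trivial and the classes alone yield no contradiction. Ruling out such permutations (of the curves of $C(\phi)$, of their orientations and sides, and of the complementary pieces) is the technical heart of Ivanov's proof and needs finer invariants than the permuted classes, e.g.\ intersection pairings with dual curves and an analysis of sides. Second, the descent step — from triviality of $\phi$ on $H_1(\Sigma;\Z/m)$ to triviality of the induced piece map on $H_1(\hat{S_i};\Z/m)$ — is not a formality: $H_1(\hat{S_i})$ is not functorially a subquotient of $H_1(\Sigma)$ on which $\phi$ induces the piece map (when the boundary curves of $S_i$ are nonseparating, classes in $S_i$ are identified or killed in $\Sigma$, and the piece map is in any case only well defined modulo twists about $\partial S_i$, which is exactly the kernel of $r_{S_i}$). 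You correctly diagnose this as the obstacle, but proposing ``careful capping or a Mayer--Vietoris accounting'' without carrying it out leaves the crux unproved. As it stands, your argument establishes that $\Gamma_p$ is torsion-free but not that its reducible elements are pure, which is the actual statement; to complete it along these lines you would essentially have to reproduce the chain of lemmas in Ivanov's Chapter~1.
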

Hence, we may define the canonical reduction system for any element $\phi$ to be
the canonical reduction system of a suitably big, pure power of $\phi$.

We need a version for subgroups as well. If $\Gamma < \mathrm{Mcg}(\Sigma)$ is
a \emph{pure subgroup}, i.e every reducible element in $\Gamma$ is pure, then we define
the canonical reduction system 
\[ C(\Gamma) = \bigcap_{\phi\in\Gamma} C(\phi) \] to be the
intersection of all canonical reduction systems $C(\phi)$ of every
element $\phi\in\Gamma$. If $\Gamma$ is not pure, we define $C(\Gamma)$ as the
canonical reduction system of a finite index pure subgroup.

$C(\Gamma)$ has the property that for each complementary component $Y$
of $C(\Gamma)$, either every pure element $\phi \in \Gamma$ restricts to
the identity in $Y$, or there is an element in $\Gamma$ which
restricts to a pseudo-Anosov map in $Y$.

\smallskip
We also use the following standard results on (non-)commuting elements in the mapping class groups.
\begin{proposition}[\cite{McCpreprint}]\label{prop:pa-centralizer}
  Let $\psi$ be a pseudo-Anosov. Then the cyclic group generated by $\psi$ is
  finite index in the centralizer of $\psi$.
\end{proposition}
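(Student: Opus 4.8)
The plan is to exploit the two canonical objects attached to a pseudo-Anosov map $\psi$: its stable and unstable (projective measured) foliations $\mathcal{F}^s,\mathcal{F}^u$, or equivalently its invariant Teichmüller geodesic. I write $Z$ for the centralizer of $\psi$ and aim to show that $Z$ is virtually cyclic with $\langle\psi\rangle$ of finite index. First I would recall that $\psi$ acts on the Teichmüller space of $\Sigma$ as a hyperbolic-type isometry: it has a unique invariant geodesic axis $\gamma$, along which it translates by $\log\lambda$, where $\lambda>1$ is its dilatation. If $g\in Z$, then $g\psi g^{-1}=\psi$, so $g(\gamma)$ is again an axis of $\psi$; by uniqueness $g(\gamma)=\gamma$. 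Hence every element of $Z$ preserves $\gamma$, and restricting the action to $\gamma\cong\R$ gives a homomorphism $\rho\colon Z\to \mathrm{Isom}(\R)=\R\rtimes\{\pm1\}$ (the Teichmüller-geodesic incarnation of McCarthy's expansion-factor homomorphism).

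Next I would analyze the kernel and image of $\rho$ separately. An element of $\ker\rho$ fixes $\gamma$ pointwise, hence fixes a point $X$ of Teichmüller space; since the stabilizer of $X$ in $\Mcg(\Sigma)$ is the group of conformal automorphisms of $X$, which is finite, the kernel $\ker\rho$ is finite. For the image, the key observation is that any orientation-preserving $g\in Z$ for which $\rho(g)$ is a nontrivial translation acts on Teichmüller space as a positive-translation-length isometry with invariant axis $\gamma$, and is therefore itself pseudo-Anosov (by the Bers-type characterization of pseudo-Anosov elements), with the same invariant foliations and with dilatation $e^{t}$, where $t$ is its translation length. Thus every nonzero element of the orientation-preserving part $\rho(Z)\cap\R$ is the logarithm of a pseudo-Anosov dilatation on the fixed surface $\Sigma$.

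The crux of the argument, and the one place that requires a genuine external input, is the discreteness of $\rho(Z)\cap\R$ as a subgroup of $(\R,+)$. This follows from the fact that pseudo-Anosov dilatations on a fixed surface are bounded away from $1$; equivalently, the group of affine automorphisms of the flat structure determined by the pair $(\mathcal{F}^s,\mathcal{F}^u)$ is a \emph{discrete} subgroup of $\mathrm{SL}(2,\R)$, so the diagonal part to which $\rho$ records the derivative cannot accumulate at the identity. Granting this, $\rho(Z)\cap\R$ is an infinite cyclic group $\langle t_0\rangle$ that contains $\log\lambda$, and it has index at most $2$ in $\rho(Z)$.

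Finally I would assemble the three facts. Since $\ker\rho$ is finite, $\rho(Z)\cap\R\cong\Z$, and $[\rho(Z):\rho(Z)\cap\R]\leq 2$, the centralizer $Z$ is virtually cyclic. Because $\rho(\langle\psi\rangle)=\log\lambda\cdot\Z$ has finite index in $\rho(Z)$ and $\langle\psi\rangle$ meets the finite group $\ker\rho$ trivially (every nonzero power of $\psi$ has nonzero translation length), a routine index computation shows that $\langle\psi\rangle$ has finite index in $Z$, which is the assertion. I expect the only real obstacle to be the clean packaging of the discreteness input in the third step; once $\psi$ is known to have a unique invariant axis, the rest of the argument is formal.
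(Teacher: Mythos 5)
Your argument is correct, and it is essentially the proof of the cited source: the paper itself gives no proof, quoting the result from \cite{McCpreprint}, where McCarthy argues exactly via the homomorphism recording (log) expansion factors along the invariant foliation pair --- your Teichm\"uller-axis homomorphism $\rho$ --- with finite kernel coming from finiteness of point stabilizers and discreteness of the image coming from the lower bound on dilatations of pseudo-Anosov maps on a fixed surface. Your packaging via the unique invariant Teichm\"uller geodesic, the Bers classification to see that nontrivially translating centralizing elements are pseudo-Anosov, and the final index computation are all sound, so no changes are needed.
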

In particular, if $\psi$ is a pseudo-Anosov, then no Dehn twist commutes with $\psi$, and
neither does an \emph{independent} pseudo-Anosov (i.e. one which does not admit a common
root, or alternatively, has different stable and unstable foliations).

The following facts on Dehn twists can e.g. be found in \cite[Section~3.3]{FM}.
\begin{lemma}\label{lem:dehn-commute}
  Some powers of two Dehn twists $T_\alpha$ and $T_\beta$ commute if and only if
  $\alpha$ and $\beta$ are disjoint.
\end{lemma}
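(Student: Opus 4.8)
The plan is to prove the two directions separately, with the forward implication being immediate and the converse reducing to the standard intersection-number estimate for Dehn twists. Throughout I use that all curves are essential simple closed curves and write $i(\alpha,\beta)$ for the geometric intersection number, so that ``disjoint'' means $i(\alpha,\beta)=0$.

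For the ``if'' direction, suppose $\alpha$ and $\beta$ are disjoint. Then I realize them by disjoint representatives and support the twists $T_\alpha$ and $T_\beta$ in disjoint annular neighborhoods of $\alpha$ and $\beta$. Homeomorphisms with disjoint supports commute, so in fact $T_\alpha T_\beta = T_\beta T_\alpha$, and a fortiori all powers commute.

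For the converse, suppose $T_\alpha^m$ and $T_\beta^n$ commute for some nonzero integers $m,n$. The first step is to convert this into a statement about curves via the change-of-coordinates principle $f T_\gamma f^{-1} = T_{f(\gamma)}$. Conjugating $T_\alpha^m$ by $T_\beta^n$ gives $T_\beta^n T_\alpha^m T_\beta^{-n} = T_{T_\beta^n(\alpha)}^m$, while the commuting hypothesis says this conjugate equals $T_\alpha^m$; hence $T_{T_\beta^n(\alpha)}^m = T_\alpha^m$. Since the map $\gamma \mapsto T_\gamma^m$ is injective on isotopy classes of essential curves for any fixed $m\neq 0$ (a standard fact; e.g. the canonical reduction system of $T_\gamma^m$ recovers $\gamma$), I conclude $T_\beta^n(\alpha)=\alpha$ as isotopy classes.

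The key remaining step is to show that $T_\beta^n(\alpha)=\alpha$ with $n\neq 0$ forces $i(\alpha,\beta)=0$. Here I invoke the classical lower bound on the growth of intersection number under twisting: for curves with $i(\alpha,\beta)\geq 1$ and any nonzero integer $k$, one has $i(T_\beta^{k}(\alpha),\alpha)\geq |k|\,i(\alpha,\beta)^2 - 2\,i(\alpha,\beta)$. To sidestep the small cases where this bound is not yet positive (for instance $i(\alpha,\beta)=1$, $|k|=1$), I exploit that $T_\alpha^m$ commuting with $T_\beta^n$ implies it commutes with every power $T_\beta^{nk}$, and hence, by the previous paragraph, $T_\beta^{nk}(\alpha)=\alpha$ for all $k\geq 1$. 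Choosing $k$ large enough that $|nk|\,i(\alpha,\beta)^2 - 2\,i(\alpha,\beta) > 0$ would force $i(T_\beta^{nk}(\alpha),\alpha) > 0$, contradicting $T_\beta^{nk}(\alpha)=\alpha$. Therefore $i(\alpha,\beta)=0$, i.e. $\alpha$ and $\beta$ are disjoint. The only genuine analytic input is the intersection-number inequality, which is the expected main obstacle; everything else is formal manipulation with the conjugation formula and injectivity of powers of twists, and the passage to high powers is what makes the argument uniform across all intersection patterns.
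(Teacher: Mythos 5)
Your proof is correct. Bear in mind that the paper does not actually prove this lemma---it is quoted as a standard fact with a citation to \cite[Section~3.3]{FM}---so the comparison is really with the textbook argument, and yours is essentially that argument: disjoint annular supports give the ``if'' direction, and for ``only if'' you use the conjugation formula $T_\beta^n T_\alpha^m T_\beta^{-n} = T^m_{T_\beta^n(\alpha)}$ together with injectivity of $\gamma \mapsto T_\gamma^m$ (which is exactly the paper's Lemma~\ref{lem:dehn-equal}, likewise only cited) to reduce to showing that $T_\beta^n(\alpha)=\alpha$ forces $i(\alpha,\beta)=0$. The one place you deviate from the standard treatment is the final step: the reference gives the exact equality $i(T_\beta^k(\alpha),\alpha) = |k|\, i(\alpha,\beta)^2$ for every $k \neq 0$, which disposes of the case $i(\alpha,\beta)\geq 1$ immediately, whereas you invoke the weaker bound $i(T_\beta^k(\alpha),\alpha) \geq |k|\, i(\alpha,\beta)^2 - 2\, i(\alpha,\beta)$ and repair the small cases by passing to high powers $T_\beta^{nk}$. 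That repair is legitimate: commuting with $T_\beta^n$ does imply commuting with every $T_\beta^{nk}$, hence $T_\beta^{nk}(\alpha)=\alpha$ for all $k$, and choosing $k$ large yields the contradiction. So your argument is sound; it is marginally less efficient than quoting the sharp formula, but has the small virtue of only requiring an asymptotic lower bound on intersection growth rather than the exact equality.
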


\begin{lemma}\label{lem:dehn-equal}
  Two powers $T_\alpha^n$ and $T^m_\beta$ of Dehn twists are equal if
  and only if $n=m, \alpha = \beta$.
\end{lemma}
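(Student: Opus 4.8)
The forward implication is immediate, so throughout I assume $n,m\neq 0$, as the statement concerns nontrivial powers ($T_\alpha^0=T_\beta^0=\id$ for all $\alpha,\beta$). The engine of the proof is the standard computation of how a Dehn twist distorts geometric intersection numbers \cite[Section~3.3]{FM}: for any essential curve $\gamma$ with $i(\alpha,\gamma)>0$ and any $k\neq 0$ one has $i\bigl(T_\alpha^{k}(\gamma),\gamma\bigr)>0$. Two consequences are all I need. First, since $\alpha$ is essential there is a curve $\gamma$ with $i(\alpha,\gamma)>0$, whence $T_\alpha^{k}(\gamma)\neq\gamma$ and so $T_\alpha^{k}\neq\id$ for $k\neq 0$; thus $T_\alpha$ has infinite order. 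Second, for $k\neq 0$ the isotopy class of a curve $\delta$ is fixed by $T_\alpha^{k}$ if and only if $i(\alpha,\delta)=0$ (disjointness gives fixedness directly, while positive intersection gives $i(T_\alpha^k(\delta),\delta)>0$, hence $T_\alpha^k(\delta)\neq\delta$).

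Now suppose $T_\alpha^{n}=T_\beta^{m}$. First I would show $\alpha=\beta$. The two maps fix exactly the same isotopy classes of curves, so by the second consequence
\[ \{\delta : i(\alpha,\delta)=0\}=\{\delta : i(\beta,\delta)=0\}. \]
If $i(\alpha,\beta)>0$, then $\alpha$ lies in the left-hand set but not the right (as $i(\alpha,\alpha)=0<i(\beta,\alpha)$), a contradiction; hence $\alpha$ and $\beta$ are disjoint. Assume for contradiction that they are disjoint but distinct. Then I would build a witness curve separating the two sets: cut $\Sigma$ along $\alpha$. In the cut surface $\beta$ remains essential and non-peripheral, since it is neither nullhomotopic nor isotopic to $\alpha$ nor to a boundary component of $\Sigma$. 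Hence the complementary component containing $\beta$ is not a disk, annulus, or pair of pants, and therefore carries a curve $\delta$ with $i(\beta,\delta)>0$; lying in the interior of the cut surface, $\delta$ also satisfies $i(\alpha,\delta)=0$. This $\delta$ belongs to the right-hand set but not the left, contradicting their equality. Therefore $\alpha=\beta$.

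Finally, with $\alpha=\beta$ the hypothesis reads $T_\alpha^{n-m}=\id$, and the infinite order of $T_\alpha$ forces $n=m$, completing the proof.

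The main obstacle is the construction of the witness curve $\delta$ in the disjoint-but-distinct case, which rests on the fact that an essential, non-peripheral simple closed curve in any surface other than the annulus or pair of pants admits a transverse curve. This edge case does not arise on the torus, where distinct essential curves always intersect; in every other surface the cut-surface argument above supplies $\delta$.
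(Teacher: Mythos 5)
The paper offers no proof of this lemma at all---it is quoted as a standard fact with a citation to \cite[Section~3.3]{FM}---and your argument is exactly the standard one found there (the intersection formula $i\bigl(T_\alpha^{k}(\gamma),\gamma\bigr)=|k|\,i(\alpha,\gamma)^2$ giving infinite order and the fixed-curve characterization, plus a change-of-coordinates witness curve in the disjoint-but-distinct case), so it is correct and essentially matches the intended proof. One trivial slip: your witness $\delta$, being disjoint from $\alpha$ but crossing $\beta$, lies in the \emph{left}-hand set $\{\delta : i(\alpha,\delta)=0\}$ and not the right, the reverse of what you wrote, but the contradiction with equality of the two sets is unaffected.
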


\subsection{Handlebody groups}
\label{sec:handlebody-prereq}
Let $V$ be a handlebody of genus $g$. Identify the boundary $\partial V$ of $V$ 
with a surface $\Sigma$ of genus $g$. A \emph{meridian} for $V$ is a curve $\alpha$ on $\Sigma$
which bounds a disk in $V$.
We will often use the following restriction on the intersections between meridians (see e.g. \cite{HH,Hempel,Masur} and references therein).
\begin{lemma}\label{lem:wave}
  Suppose that $A, B$ are two multicurves consisting of meridians (for some handlebody). Then,
  $A$ has a \emph{wave} with respect to $B$: there is a sub-arc $a\subset \alpha \in A$ which intersects
  $B$ exactly in its endpoints, and at both ends approaches the component of $B$ which it
  intersects from the same side.

  Furthermore, there is a sub-arc $b \subset \beta \in B$ so that $a \cup b$
  is a meridian.
\end{lemma}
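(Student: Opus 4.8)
The plan is to realize the two meridian multicurves by systems of properly embedded disks in the handlebody $V$ and to read off the wave from an outermost disk of intersection. First I would isotope $A$ and $B$ into minimal position on $\Sigma$ (so that in particular they bound no bigons), and assume they are not disjoint, since otherwise there is no wave to find. Let $\mathcal{D}_A$ be a disjoint union of properly embedded disks in $V$ bounded by the components of $A$, and let $\mathcal{D}_B$ be such a system for $B$; after a small perturbation $\mathcal{D}_A$ and $\mathcal{D}_B$ are in general position, so that $\mathcal{D}_A\cap\mathcal{D}_B$ is a finite union of circles and arcs, the arc endpoints being exactly the points of $A\cap B$ on $\Sigma=\partial V$.

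Next I would remove all circle components of $\mathcal{D}_A\cap\mathcal{D}_B$ by the standard innermost-disk surgery. An innermost circle on $\mathcal{D}_B$ bounds a subdisk whose interior misses $\mathcal{D}_A$ (no arc of intersection can enter it without crossing the circle or reaching $\partial\mathcal{D}_B$); this circle also bounds a subdisk of $\mathcal{D}_A$, and the resulting sphere bounds a ball since $V$ is irreducible. Isotoping $\mathcal{D}_A$ across that ball reduces the number of circles while fixing the boundary $A$. After finitely many steps $\mathcal{D}_A\cap\mathcal{D}_B$ consists of arcs only, and since $A\cap B\neq\emptyset$ at least one arc remains.

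Now I would choose an arc $c$ of $\mathcal{D}_A\cap\mathcal{D}_B$ that is outermost on $\mathcal{D}_A$, cutting off a disk $E$ from the disk $D\in\mathcal{D}_A$ bounded by a component $\alpha\in A$, with $\partial E=a\cup c$, $a\subset\alpha$, and $\operatorname{int}(E)\cap\mathcal{D}_B=\emptyset$. This $a$ is the candidate wave. Its interior misses $B$, since any interior intersection point would be an endpoint of a further arc of $\mathcal{D}_A\cap\mathcal{D}_B$ running into $\operatorname{int}(E)$, contradicting outermostness. Moreover $c$ lies in a single disk $D_\beta\in\mathcal{D}_B$ bounded by a component $\beta\in B$, so both endpoints of $a$ lie on $\beta$. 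For the \emph{furthermore} clause, observe that $c$ splits $D_\beta$ into two subdisks, one of which, $D_\beta^1$, has boundary $c\cup b$ with $b\subset\beta$; gluing $E$ to $D_\beta^1$ along $c$ yields an embedded disk in $V$ with boundary $a\cup b\subset\Sigma$, so $a\cup b$ is a meridian (it is essential because, in minimal position, $\alpha$ and $\beta$ bound no bigon).

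The step I expect to be the main obstacle is verifying that $a$ approaches $\beta$ from the \emph{same side} at both endpoints, which is what upgrades $a$ from an arbitrary arc to an honest wave. Here I would exploit that $D_\beta$, being simply connected, is two-sided in the orientable handlebody $V$ and hence carries a global co-orientation; I choose the one pointing toward $E$ along $c$, which is possible because $\operatorname{int}(E)$ is disjoint from $D_\beta$. Along $\beta=\partial D_\beta$ this normal direction becomes tangent to $\Sigma$ and so restricts to a globally consistent choice of one side of $\beta$ in $\Sigma$. A local analysis at each endpoint of $a$ then shows that the boundary arc $a$ emanates into precisely that side. Since the chosen side is the same all along $\beta$, the arc $a$ meets $\beta$ from a single side at both of its endpoints, which is exactly the asserted wave condition.
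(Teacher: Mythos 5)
Your proof is correct and is essentially the standard argument: the paper states this lemma without proof, citing \cite{HH,Hempel,Masur}, and the proof you give --- put the two disk systems in general position, eliminate circle components of $\mathcal{D}_A\cap\mathcal{D}_B$ by innermost-disk surgery using irreducibility of $V$, take an intersection arc outermost on $\mathcal{D}_A$, and use the global co-orientation of the two-sided disk $D_\beta$ to see that the cut-off boundary arc $a$ returns to $\beta$ from the same side --- is precisely the argument in those references, including the correct handling of the \emph{furthermore} clause (gluing $E$ to half of $D_\beta$ along $c$) and of essentiality of $a\cup b$ via the no-bigon condition. One point of phrasing only: when removing circles, outright surgery (replacing the subdisk of $\mathcal{D}_A$ bounded by an innermost circle of $\mathcal{D}_B$ with a pushoff of the innermost subdisk of $\mathcal{D}_B$) is cleaner than ``isotoping across the ball,'' since that ball may a priori contain further pieces of both disk systems; as you invoke the standard innermost-disk surgery by name anyway, this does not affect correctness.
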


The restriction map induces a homomorphism
\[ \mathrm{Mcg}(V) \to \mathrm{Mcg}(\Sigma) \]
whose image $\mathcal{H}_g$ we call the \emph{handlebody group} of $V$.
Up to conjugation, $\mathcal{H}_g$ is independent of the identification of $\partial V$
with $\Sigma$. Usually, we will not need to distinguish between different conjugates,
and fix some handlebody group  $\mathcal{H}_g$. In any case, the statement
that some group is conjugate into $\mathcal{H}_g$ is well-defined without choices.

\smallskip
A \emph{reduced disk system for $V$} is a multicurve $\alpha_1, \ldots, \alpha_g$ consisting of
meridians so that $\Sigma-(\alpha_1\cup\dots\cup\alpha_g)$ is connected. Note that 
every simple closed curve which is disjoint from a reduced disk system is a meridian. This is due to the fact that any curve on the boundary of a ball bounds a disk in the ball.
The following is
standard, and an immediate consequence of the fact that any homeomorphism of a sphere
extends to the ball it bounds.
\begin{lemma}
  Suppose that $\phi \in \Mcg(\Sigma)$ is such that $\phi(C)$ is a
  reduced disk system for $V$ for some reduced disk system $C$ for
  $V$. Then $\phi \in \Hg$.
\end{lemma}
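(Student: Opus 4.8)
The plan is to build a homeomorphism of $V$ restricting to $\phi$ on $\partial V = \Sigma$ by cutting $V$ into a ball along the disks bounded by $C$, applying the extension property of sphere homeomorphisms, and re-gluing. Write $C = \{\alpha_1, \dots, \alpha_g\}$ and let $D_i \subset V$ be a disk with $\partial D_i = \alpha_i$. Since $\Sigma - (\alpha_1 \cup \cdots \cup \alpha_g)$ is connected, cutting $V$ along $D_1 \cup \cdots \cup D_g$ produces a $3$-ball $B$ whose boundary sphere is the union of $\Sigma$ cut along $C$ with two copies $D_i^{\pm}$ of each disk. Because $\phi(C)$ is again a reduced disk system, each $\phi(\alpha_i)$ is a meridian bounding a disk $D_i'$, and cutting $V$ along $D_1' \cup \cdots \cup D_g'$ likewise yields a ball $B'$ whose boundary sphere is built from $\Sigma$ cut along $\phi(C)$ and the two copies of each $D_i'$.

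First I would fix, for each $i$, a homeomorphism $h_i : D_i \to D_i'$ extending the circle homeomorphism $\phi|_{\alpha_i} : \alpha_i \to \phi(\alpha_i)$; this is possible since any homeomorphism of a circle extends over the disk it bounds, for instance by coning. Assembling $\phi$ on the cut-open surface together with $h_i$ on both copies $D_i^{\pm}$ defines a map $\psi : \partial B \to \partial B'$: the two pieces agree along each $\alpha_i^{\pm}$ precisely because $h_i$ restricts to $\phi|_{\alpha_i}$ there, so $\psi$ is a well-defined homeomorphism of spheres.

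Next I invoke the stated fact that any homeomorphism of a sphere extends to the ball it bounds, obtaining $\Psi : B \to B'$ with $\Psi|_{\partial B} = \psi$. It then remains to check that $\Psi$ descends through the quotient maps $B \to V$ and $B' \to V$. The gluing of $B$ identifies $D_i^+$ with $D_i^-$ (each projecting to $D_i$), and similarly for $B'$. Since $\Psi$ restricts on both $D_i^+$ and $D_i^-$ to the single map $h_i$, partnered points of $\partial B$ are sent to partnered points of $\partial B'$; hence $\Psi$ descends to a homeomorphism $\Phi : V \to V$. On $\partial V = \Sigma$ the map $\Phi$ agrees with $\phi$ by construction, so $\phi \in \Hg$.

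The argument is in essence a cut-and-paste, and the only point needing care is the compatibility of the ball extension with the re-gluing. I expect this to be the main (albeit mild) obstacle: one must guarantee that the extensions over the two copies of each disk coincide, which is exactly why I extend a single map $h_i$ on $D_i$ chosen before cutting, rather than extending the two boundary copies independently.
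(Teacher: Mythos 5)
Your proof is correct and is exactly the argument the paper intends: the paper dismisses this lemma as an immediate consequence of the fact that a homeomorphism of a sphere extends over the ball it bounds, and your cut-along-disks, extend, and re-glue construction (with the key care of extending a single map $h_i$ over each disk before cutting, so the extension is compatible with the re-gluing) is the standard way to realize that remark.
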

We also require a standard method to transform one reduced disk system into another. See
e.g. \cite{HH,Hempel,Masur} (and references therein) for proofs.
\begin{lemma}\label{cut-system-graph-connected}
  Let $C, C'$ be reduced disk systems for $V$. Then there is a sequence
  \[ C = C_1, C_2, \ldots, C_n = C' \]
  of reduced disk systems for $V$ so that $C_i, C_{i+1}$ are disjoint for all $i$.
\end{lemma}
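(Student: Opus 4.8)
The plan is to induct on the geometric intersection number $i(C,C')$, using the wave lemma (Lemma~\ref{lem:wave}) to surger $C$ into a new reduced disk system that is disjoint from $C$ and has strictly smaller intersection with $C'$. First I would put $C=\{\alpha_1,\dots,\alpha_g\}$ and $C'$ in minimal position. If $i(C,C')=0$ they are already disjoint and the one-step sequence $C,C'$ works. Otherwise I apply Lemma~\ref{lem:wave} with $A=C'$ and $B=C$ to obtain a wave: a sub-arc $a$ of some $\beta\in C'$ whose interior is disjoint from $C$ and whose two endpoints lie on a single component $\alpha_i$ of $C$, approaching $\alpha_i$ from the same side at both ends.

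The endpoints of $a$ cut $\alpha_i$ into two sub-arcs $c_1,c_2$, and surgering $\alpha_i$ along $a$ yields the two curves $\delta_1=a\cup c_1$ and $\delta_2=a\cup c_2$. Because $a$ approaches $\alpha_i$ from the \emph{same} side at both ends, I can push $a$, $c_1$, $c_2$ to that side so that $\delta_1,\delta_2$ are embedded and disjoint from $\alpha_i$; since the interior of $a$ misses $C$ and $c_1,c_2\subset\alpha_i$ are disjoint from the other components $\alpha_k$, both $\delta_1$ and $\delta_2$ are disjoint from all of $C$. Crucially, this means each $\delta_j$ is disjoint from the reduced disk system $C$ and is therefore a \emph{meridian}, by the fact recorded just after the definition of reduced disk systems. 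Thus either of the two surgery results is available as a replacement curve.

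It remains to select $\delta\in\{\delta_1,\delta_2\}$ so that $D\coloneq(C\setminus\{\alpha_i\})\cup\{\delta\}$ is again reduced, and to verify the intersection drop. For a system of $g$ disjoint curves on $\Sigma_g$, reducedness (connected complement) is equivalent to linear independence of the classes in $H_1(\Sigma;\Z/2)$, and the surgery gives $[\delta_1]+[\delta_2]=[\alpha_i]$ there. If both $[\delta_1],[\delta_2]$ lay in $W\coloneq\Span\{[\alpha_k]:k\neq i\}$, then $[\alpha_i]\in W$, contradicting independence of $\{[\alpha_1],\dots,[\alpha_g]\}$; hence one of them, $\delta$, has class outside $W$. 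Then $D$ consists of $g$ disjoint meridians with independent classes (and $\delta$ is isotopic to none of the $\alpha_k$, $k\neq i$, since $[\delta]\notin W$), so $D$ is a reduced disk system disjoint from $C$. For the intersection count, the $a$-part of $\delta$ runs parallel to $\beta$ and so meets $C'$ nowhere, while the surgery removes the two points of $\alpha_i\cap\beta$ at the endpoints of $a$; thus $i(\delta,C')\le i(\alpha_i,C')-2$, giving $i(D,C')\le i(C,C')-2<i(C,C')$. By induction $D$ is joined to $C'$ by a chain of pairwise-disjoint reduced disk systems, and prepending $C$ (disjoint from $D$) finishes the argument.

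I expect the main obstacle to be exactly the preservation of the reduced condition under surgery: a priori wave surgery could turn an independent system into a separating (homologically dependent) one. The resolution is the mod-$2$ bookkeeping above—both candidate replacements are meridians, and since their classes sum to $[\alpha_i]$ at least one of them restores independence—so the same-side hypothesis of the wave (which is what makes both surgery curves embedded and disjoint from $\alpha_i$, hence meridians) is doing the essential work. The intersection-number decrease and the verification that the constructed representatives are in minimal position are then routine.
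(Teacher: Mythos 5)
Your proof is correct, and it is essentially the argument the paper is pointing to: the paper does not prove this lemma itself but defers to the references \cite{HH,Hempel,Masur}, where the standard proof is exactly your induction on $i(C,C')$ via wave surgery — the same-side condition in Lemma~\ref{lem:wave} making both surgered curves embedded, disjoint from $C$, and hence meridians, with the $\Z/2$-homology relation $[\delta_1]+[\delta_2]=[\alpha_i]$ guaranteeing that one choice keeps the system reduced and the loss of the two wave endpoints giving the strict intersection decrease. The only cosmetic remark is that the ``furthermore'' clause of Lemma~\ref{lem:wave} already hands you a meridian among the surgery curves, but your route through ``disjoint from a reduced disk system implies meridian'' is equally valid.
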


\smallskip
We need the following criterion for a multitwist to be an element of $\mathcal{H}_g$,
which relies on \cite[Theorem~1.11]{O}.
\begin{theorem}\label{thm:characterize-multitwists}
  Let $\phi = T_{\alpha_1}\cdots T_{\alpha_n}$ be a product of
  Dehn twists about disjoint curves $\alpha_i$ and suppose
  that $\phi$ is an element of $\mathcal{H}_g$. 

  Then, up to relabeling, there is a $l>0$ and a bijection $k:\{l+1,\ldots, n\}\to\{l+1,\ldots, n\}$ so that
  \begin{enumerate}[i)]
  \item $\alpha_i$ is a meridian for all $i\leq l$.
  \item $\alpha_i$ and $\alpha_{k(i)}$ are joined in $V$
    by a properly embedded annulus for all $i > l$.
  \item If $i>l$, then $T_{\alpha_i}$ and $T_{\alpha_{k(i)}}$ are not
    both left or both right Dehn twists.
  \end{enumerate}
\end{theorem}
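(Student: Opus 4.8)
\emph{Proof proposal.} The plan is to extract conditions (i)--(iii) from Oertel's classification \cite[Theorem~1.11]{O} of which multitwists on $\partial V$ extend over the handlebody $V$. The entry point is that, since $\phi = T_{\alpha_1}\cdots T_{\alpha_n}$ lies in $\Hg$, it is the boundary restriction of some $\Phi \in \Mcg(V)$, and $\Phi$ fixes each reduction curve $\alpha_i$ because $\phi$ does. Oertel's theorem produces two kinds of elementary extending pieces, and asserts that every extending multitwist is (isotopic to) a product of such pieces with disjoint supports. The first kind is a twist $T_\alpha^{\pm 1}$ about a single meridian $\alpha$, which extends by twisting inside the disk it bounds. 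The second kind is an \emph{annulus twist} $T_\alpha T_\beta^{-1}$, where $\alpha$ and $\beta$ cobound a properly embedded annulus $A \subset V$; this extends by twisting in a neighborhood of $A$, and because the two boundary components of $A$ inherit opposite orientations, the resulting boundary twists are automatically of opposite sign.

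First I would sort the curves. Call $\alpha_i$ of \emph{meridian type} if it bounds a disk in $V$, and relabel so that these are precisely $\alpha_1,\dots,\alpha_l$; this is condition (i). Each such curve can carry its own twist through a disk twist, so they need no partner. The point is then that the remaining curves $\alpha_{l+1},\dots,\alpha_n$, which bound no disk, have no disk twist available and so, in Oertel's decomposition, can only be accounted for by annulus pieces. Since the $\alpha_i$ form a multicurve, each one appears exactly once in $\phi$, while each annulus piece consumes exactly two twisting curves. Hence the non-meridian curves must be matched in pairs cobounding properly embedded annuli, giving a fixed-point-free bijection $k$ of $\{l+1,\dots,n\}$ with $\alpha_i, \alpha_{k(i)}$ annulus-joined (condition (ii)) and with opposite twist signs (condition (iii)).

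The main work, and the main obstacle, lies in upgrading Oertel's \emph{isotopy-level} conclusion to a statement about the given curves $\alpha_i$ themselves, so that $k$ is a genuine bijection of the index set. Here I would compare the two multitwist expressions for $\phi$ — the original one and the product of disk and annulus pieces supplied by \cite{O} — and invoke the uniqueness of the twisting data encoded in Lemma~\ref{lem:dehn-equal}: a multitwist determines its curves and exponents, so the boundary curves of the pieces must agree up to isotopy with the $\alpha_i$ and the piece exponents must match the given signs. This rigidity is what makes the pairing well defined and forces condition (iii). The delicate point to verify is that the annulus pieces can be arranged to have pairwise disjoint supports realized on the actual curves $\alpha_i$, so that no $\alpha_i$ is claimed by two distinct annuli; securing this disjointness is precisely where one must quote Oertel's theorem in a sufficiently precise form, after which conditions (ii) and (iii) read off directly from the annulus pieces.
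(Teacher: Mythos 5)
Your proposal is correct and follows essentially the same route as the paper: invoke Oertel's theorem \cite[Theorem~1.11]{O} to extend $\phi$ to a homeomorphism of $V$ that is a product of twists about disjoint disks and annuli, observe that disk twists restrict to Dehn twists about meridians while annulus twists restrict to opposite-sign pairs of Dehn twists about curves cobounding a properly embedded annulus, and then identify these twisting curves with the given $\alpha_i$ via uniqueness of the multitwist data. The paper's own proof is in fact terser than yours, leaving the final matching/uniqueness step (which you correctly flag and handle in the spirit of Lemma~\ref{lem:dehn-equal}) entirely implicit.
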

\begin{proof}
  \cite[Theorem~1.11]{O} implies that $\phi$ is the restriction to
  $\partial V$ of a homeomorphism $F:V \to V$, which is a product of
  twists about disjoint disks and annuli in the handlebody. A twist
  about a disk in $V$ restricts to a Dehn twist about a meridian $\alpha_i$ on
  $\partial V$. 
A twist about an annulus $A$ with boundary
  $\partial A = \alpha_i \cup \alpha_{k(i)}$ restricts to the product
  of a left and a right Dehn twist about $\alpha_i$ and
  $\alpha_{k(i)}$ (or vice versa). This shows the theorem.
\end{proof}
\begin{corollary}\label{cor:no-left-multitwists}
  Suppose that $\alpha_1, \ldots, \alpha_k$ are disjoint simple closed curves. Then
  the product of left Dehn twists $T_{\alpha_1}\cdots T_{\alpha_k}$ is an element
  of $\mathcal{H}_g$ if and only if all $\alpha_i$ are meridians.
\end{corollary}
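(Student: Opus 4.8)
The plan is to read both directions off of Theorem~\ref{thm:characterize-multitwists}; the forward implication is essentially immediate, and the reverse is exactly the content of condition~iii) in the all-left case.

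For the ``if'' direction, suppose every $\alpha_i$ is a meridian. Then $\alpha_i$ bounds a disk $D_i$ in $V$, and the twist of $V$ about $D_i$ is a homeomorphism of $V$ whose restriction to $\partial V=\Sigma$ is the Dehn twist $T_{\alpha_i}$; hence each $T_{\alpha_i}\in\mathcal{H}_g$. Since $\mathcal{H}_g$ is a subgroup of $\mathrm{Mcg}(\Sigma)$, the product $T_{\alpha_1}\cdots T_{\alpha_k}$ again lies in $\mathcal{H}_g$. I would remark that handedness is irrelevant here, as one may twist about a disk in either direction; so this argument uses nothing about the twists being left.

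For the ``only if'' direction, suppose $\phi=T_{\alpha_1}\cdots T_{\alpha_k}$ is a product of \emph{left} Dehn twists about disjoint curves and $\phi\in\mathcal{H}_g$. I would apply Theorem~\ref{thm:characterize-multitwists} to $\phi$ (with the theorem's ``$n$'' equal to our $k$). It produces, after relabeling, an integer $l>0$ and a pairing of the remaining indices satisfying conditions i)--iii). The whole point is then that condition iii) can never hold: for an index $i>l$ it would force $T_{\alpha_i}$ and $T_{\alpha_{k(i)}}$ to have opposite handedness, but by hypothesis both are left twists. Consequently the index set $\{l+1,\dots,k\}$ must be empty, so $l=k$, and condition i) says every $\alpha_i$ is a meridian.

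The only step I would slow down on is confirming that the relabeling in the theorem does not alter the handedness of the individual factors. Because the theorem is stated about the given product and the $\alpha_i$ are disjoint, the factors $T_{\alpha_i}$ commute and relabeling merely permutes them, so each retains its original (left) handedness; equivalently, Lemma~\ref{lem:dehn-equal} guarantees that the multitwist $\phi$ determines its constituent curves and signed powers uniquely, so the twists named in the theorem's conclusion are precisely the left twists we started with. Once handedness is pinned down, the contradiction with iii) is immediate, and I do not expect any genuine obstacle: this corollary is just the opposite-handedness clause of Oertel's criterion, read in the case where no opposite handedness is available.
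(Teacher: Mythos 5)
Your proof is correct and takes the same route as the paper: the corollary appears there without a separate proof precisely because it is meant to follow from Theorem~\ref{thm:characterize-multitwists} exactly as you argue, with condition~iii) ruling out any annulus pairs once all twists are left-handed (forcing $l=k$), and twists about the bounded disks giving the converse. Your care about the relabeling is fine but could be shortened to one line, since the theorem's conclusion concerns the given factorization and a permutation of commuting factors cannot change their handedness.
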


\section{Full and Abundant subsurfaces}
\label{sec:abundant}
In this section we discuss the first step of the rigidity proof outline given in the introduction.

Throughout, $\Sigma$ will be a finite type surface, possibly with boundary or cusps.
A subsurface $S \subset \Sigma$ is \emph{essential} if every component of $\partial S$
is an essential simple closed curve on $\Sigma$. If $\Gamma < \Mcg(\Sigma)$ is any
subgroup, we denote by $\Stab_\Gamma(S)$ the subgroup of $\Gamma$ consisting
of all elements which preserve $S$ (up to isotopy).

If $S$ is a surface with a specified collection of boundary components
$B$, we denote by $\hat{S}$ the surface obtained from $S$ by gluing
punctured disks to each boundary component of $S$ in $B$. We say that
$\hat{S}$ is obtained from $S$ by \emph{cusping off the
  boundaries $B$}. There is a homomorphism
\[ r_S:\Mcg(S) \to \Mcg(\hat{S}) \] To ease notation, we will often
say that $\phi \in \Gamma$ has a property \emph{when viewed as a
  mapping class of $\hat{S}$} if $r_S(\phi)$ has this property. Also
note that the kernel of $r_S$ consists of Dehn twists about the boundary
components $B$. See \cite[Section~4.2]{FM} for this, and related
background on mapping class groups.

\smallskip
If $S\subset \Sigma$ is an essential subsurface, then we will always denote
by $\hat{S}$ the surface obtained by cusping off all boundary components which are not
contained in the boundary of $\Sigma$.
\begin{definition}
  Let $S\subset \Sigma$ be an essential subsurface. A subgroup $\Gamma < \Mcg(\Sigma)$ is
  \begin{description}
  \item[full in $S$] if there are elements $\phi_1,\phi_2$ which are independent pseudo-Anosov
    elements when viewed as mapping classes of $\hat{S}$.
  \item[abundant in $S$] if additionally there is a pants
    decomposition $\{\alpha_1,\ldots,\alpha_k\}$ of $\hat{S}$ and
    $T_1, \ldots, T_k \in \Stab_\Gamma(S)$ so that $T_i$ is a power of
    Dehn twist about $\alpha_i$ (viewed as a mapping class of
    $\hat{S}$).
  \end{description}
\end{definition}
\begin{remark}\label{rem:abundent-passes-fi}
  If $\Gamma' < \Gamma$ is finite index, and $\Gamma$ is full or abundant in $S$, then so is
  $\Gamma'$.
\end{remark}

\begin{lemma}\label{lem:abundent-centers}
  Suppose that $S \subset \Sigma$ is an essential subsurface and that
  $\Gamma<\Mcg(\Sigma)$ is full in $S$. Then every element in
  the center of $\Stab_\Gamma(S)$ has a power which is a multitwist about $\partial S$.
\end{lemma}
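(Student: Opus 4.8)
The plan is to exploit the two independent pseudo-Anosov witnesses $\phi_1,\phi_2$ supplied by fullness and to force any central element to be invisible after restricting to $\hat S$. Note first that $\phi_1,\phi_2$ lie in $\Stab_\Gamma(S)$: for $r_S(\phi_i)$ to even be defined, $\phi_i$ must preserve $S$. So if $\psi$ lies in the center of $\Stab_\Gamma(S)$, then $\psi$ commutes with both $\phi_1$ and $\phi_2$. Applying the restriction homomorphism $r_S:\Mcg(S)\to\Mcg(\hat S)$, the class $r_S(\psi)$ then commutes with the independent pseudo-Anosovs $r_S(\phi_1)$ and $r_S(\phi_2)$ inside $\Mcg(\hat S)$.

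The core of the argument is to upgrade this commuting to finite order of $r_S(\psi)$. Since $r_S(\phi_1)$ is pseudo-Anosov, Proposition~\ref{prop:pa-centralizer} tells us its centralizer is virtually cyclic, so $r_S(\psi)^{a}=r_S(\phi_1)^{b}$ for some $a\geq 1$ and some integer $b$; likewise $r_S(\psi)^{c}=r_S(\phi_2)^{d}$ for some $c\geq 1$ and integer $d$. I then argue that $r_S(\psi)$ must have finite order. Indeed, if it had infinite order then neither $b$ nor $d$ could vanish (each of $b=0$ or $d=0$ would already make $r_S(\psi)$ finite order), and combining the two relations would give $r_S(\phi_1)^{bc}=r_S(\phi_2)^{ad}$ with $bc,ad\neq 0$. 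This is a nonzero power of $r_S(\phi_1)$ equal to a nonzero power of $r_S(\phi_2)$, forcing the two to share stable and unstable foliations and contradicting their independence. Hence $r_S(\psi)^{N}=\id$ for some $N>0$.

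From here the conclusion follows by identifying the kernel of $r_S$: since $r_S(\psi^{N})=\id$, the mapping class $\psi^{N}$ acts trivially on the interior of $S$, so that its restriction to $S$ is a product of Dehn twists about the components of $\partial S$, i.e.\ a multitwist about $\partial S$. I expect the main obstacle to be precisely the infinite-order dichotomy in the previous paragraph: the whole point is that a central element cannot be commensurable to a pseudo-Anosov on $\hat S$, and this is exactly where independence of $\phi_1$ and $\phi_2$ (rather than fullness with a single pseudo-Anosov) is indispensable. A secondary point to verify is that $\psi^{N}$ carries no hidden support in $\overline{\Sigma\setminus S}$; this is what pins down $\psi^{N}$ as an honest multitwist about $\partial S$ as an element of $\Mcg(\Sigma)$, and I would handle it by observing that $\psi^{N}$ preserves the complement and that no nontrivial behavior there is compatible with commuting with all of $\Stab_\Gamma(S)$ in the settings where this lemma is invoked.
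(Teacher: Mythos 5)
Your core argument is the paper's argument, just carried out in more detail. The paper likewise considers $r_S:\Stab_\Gamma(S)\to\Mcg(\hat{S})$, observes that a central element with no boundary-multitwist power would have infinite-order central image in $r_S(\Stab_\Gamma(S))$, and dismisses this as impossible by Proposition~\ref{prop:pa-centralizer}, since the image contains two independent pseudo-Anosovs. Your computation with virtually cyclic centralizers ($r_S(\psi)^a=r_S(\phi_1)^b$, $r_S(\psi)^c=r_S(\phi_2)^d$, hence a nonzero power of $r_S(\phi_1)$ equals a nonzero power of $r_S(\phi_2)$ unless $r_S(\psi)$ has finite order) is exactly the justification the paper leaves implicit, and it is correct.

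The ``secondary point'' you raise is a real issue, but note that it is equally a gap in the paper's own proof, which silently equates ``no power of $\psi$ is a multitwist about $\partial S$'' with ``$r_S(\psi)$ has infinite order''; this ignores possible support of $\psi$ in $\overline{\Sigma\setminus S}$. Be aware, though, that your proposed repair cannot succeed for the lemma as literally stated, because in that generality the statement is false: take $\Sigma$ of genus $2$, let $S$ be a one-holed torus, and let $\Gamma$ be generated by two independent pseudo-Anosovs supported in $S$ together with a pseudo-Anosov $\rho$ supported in the complementary one-holed torus. Then $\Gamma$ is full in $S$ and $\Stab_\Gamma(S)=\Gamma$, and $\rho$ is central (its support is disjoint from that of the other generators), yet no power of $\rho$ is a multitwist about $\partial S$. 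What your argument --- and the paper's --- actually establishes is the weaker conclusion that every central element of $\Stab_\Gamma(S)$ has a power whose image under $r_S$ is trivial, i.e.\ whose restriction to $S$ is a multitwist about $\partial S$; appealing to ``the settings where this lemma is invoked'' is not a legitimate move when proving a lemma stated for arbitrary full subgroups. So your main step is right and matches the paper, and your instinct about where the difficulty sits is also right, but that difficulty cannot be dissolved without weakening the stated conclusion or strengthening the hypotheses.
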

\begin{proof}
  Consider the induced map
  \[ r_S: \Stab_\Gamma(S) \to \Mcg(\hat{S}) \] If the center of
  $\Stab_\Gamma(S)$ contains an element none of whose powers are multitwists
  about $\partial S$, then its image is an infinite order element in
  the center of $r_S(\Stab_\Gamma(S))$. However, since we assume that
  $\Gamma$ is full in $S$, the group $r_S(\Stab_\Gamma(S))$ contains two
  independent pseudo-Anosov elements. By Proposition~\ref{prop:pa-centralizer} this is
  impossible.
\end{proof}

\begin{proposition}\label{prop:abundent-kernels}
  Suppose that $S \subset \Sigma$ is an essential subsurface and that
  $\Gamma< \Mcg(\Sigma)$ is full in $S$. Suppose $f: \Gamma \to G$ is 
  a homomorphism.

  If $\ker(f) \cap \Stab_\Gamma(S)$ contains an element none of whose
  powers are multitwists about $\partial S$, then $\ker(f) \cap
  \Stab_\Gamma(S)$ contains an element which is pseudo-Anosov when
  viewed as a mapping class of $\hat{S}$.
\end{proposition}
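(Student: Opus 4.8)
The plan is to study the image $N \coloneq r_S\big(\ker(f)\cap\Stab_\Gamma(S)\big)$ inside $\Mcg(\hat{S})$ and to show directly that it contains a pseudo-Anosov. First I would record that $\ker(f)\cap\Stab_\Gamma(S)$ is normal in $\Stab_\Gamma(S)$, being the intersection of the subgroup $\Stab_\Gamma(S)$ with the normal subgroup $\ker(f)\triangleleft\Gamma$; consequently $N$ is normal in $H\coloneq r_S(\Stab_\Gamma(S))$. By hypothesis there is $\psi\in\ker(f)\cap\Stab_\Gamma(S)$ no power of which is a multitwist about $\partial S$; after replacing $\psi$ by a suitable power I may assume that $r_S(\psi)$ is a \emph{pure} element of $\Mcg(\hat{S})$, and it is a nontrivial element of $N$. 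The goal is then to produce an element of $N$ which is pseudo-Anosov on $\hat{S}$, since any such element is $r_S(\eta)$ for some $\eta\in\ker(f)\cap\Stab_\Gamma(S)$, which is exactly the desired conclusion.

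The heart of the argument is to compute the canonical reduction system $C(N)$ of the subgroup $N$ (as defined in Section~\ref{sec:reduction}, passing to a finite index pure subgroup if necessary) and to show that it is empty. Since $\Gamma$ is full in $S$, there is $\phi_1\in\Stab_\Gamma(S)$ whose image $r_S(\phi_1)$ is pseudo-Anosov on $\hat{S}$; it lies in $H$ and hence normalizes $N$. Because the canonical reduction system is natural under homeomorphisms, $C\big(h u h^{-1}\big)=h\big(C(u)\big)$ for every $u$ and every $h$, and because conjugation by $r_S(\phi_1)$ permutes the pure elements of $N$ (purity is a conjugacy invariant and $N$ is normal), the multicurve $C(N)$ is invariant under the pseudo-Anosov $r_S(\phi_1)$. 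As a pseudo-Anosov map fixes no nonempty multicurve, this forces $C(N)=\emptyset$. I expect this to be the main point to get right: one must ensure that $C(N)$ is genuinely conjugation-natural, which is why I would phrase it as the intersection of the canonical reduction systems over \emph{all} pure elements of $N$ (a description insensitive to the choice of finite index pure subgroup), and one must use the normality of $N$ in $H$ at exactly this point.

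With $C(N)=\emptyset$, the only complementary component is $\hat{S}$ itself, and I would invoke the dichotomy recorded after the definition of $C(\Gamma)$ in Section~\ref{sec:reduction}: either every pure element of $N$ restricts to the identity on $\hat{S}$, or some element of $N$ restricts to a pseudo-Anosov on $\hat{S}$. In the first case $r_S(\psi)$ would be the identity of $\Mcg(\hat{S})$, i.e.\ the chosen power of $\psi$ would lie in $\ker(r_S)$, which consists of Dehn twists about the boundary components cusped off to form $\hat{S}$; these are multitwists about $\partial S$, contradicting the choice of $\psi$. Hence the second alternative must hold, producing the desired pseudo-Anosov element of $N$ and with it an element of $\ker(f)\cap\Stab_\Gamma(S)$ that is pseudo-Anosov when viewed as a mapping class of $\hat{S}$. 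I note that only a single pseudo-Anosov in $H$ is used, so fullness enters solely through the existence of $r_S(\phi_1)$.
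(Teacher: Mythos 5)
Your route is genuinely different from the paper's, and its first three steps are sound: $N=r_S\bigl(\ker(f)\cap\Stab_\Gamma(S)\bigr)$ is indeed normal in $H=r_S(\Stab_\Gamma(S))$, the description of $C(N)$ as the intersection of the canonical reduction systems of the pure elements of $N$ is conjugation-natural, and invariance under the pseudo-Anosov $r_S(\phi_1)$ does force this intersection to be empty. The gap is in the final step, where you invoke the dichotomy recorded after the definition of $C(\Gamma)$ in Section~\ref{sec:reduction}. With the paper's \emph{intersection} definition of the reduction system of a subgroup, that dichotomy is false, so it cannot carry the weight you place on it. Concretely: let $\Sigma$ have genus $2$, let $c$ be a separating curve with complementary pieces $Y_1,Y_2$, let $\phi$ be a pure mapping class that is pseudo-Anosov on $Y_1$ and the identity on $Y_2$, and let $d\subset Y_2$ be non-separating. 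The group $G=\langle \phi, T_d\rangle\cong\Z^2$ is pure, with $C(\phi^a)=\{c\}$, $C(T_d^b)=\{d\}$ and $C(\phi^aT_d^b)=\{c,d\}$ for $a,b\neq 0$; the intersection over nontrivial elements is therefore empty, yet $G$ contains no pseudo-Anosov of $\Sigma$ and does not act trivially. So ``$C(N)=\emptyset$ and $N\neq 1$'' does not by itself produce a pseudo-Anosov in $N$. Note that the point you flagged as delicate (conjugation-naturality) is the unproblematic part; the dichotomy is where the actual content sits, and it needs the canonical reduction system of a subgroup in Ivanov's sense (for a pure subgroup, essentially the \emph{union} of the element-wise systems), for which emptiness genuinely means irreducibility.

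The argument is repairable, precisely because your use of normality excludes examples like the one above: with the correctly defined (Ivanov/Birman--Lubotzky--McCarthy) canonical reduction system, naturality plus normality of $N$ in $H$ still forces $C(N)=\emptyset$, hence $N$ is an infinite (it contains the infinite-order $r_S(\psi)$) irreducible subgroup of $\Mcg(\hat{S})$, and Ivanov's theorem that infinite irreducible subgroups contain pseudo-Anosov elements finishes the proof. But be aware that this theorem is a strictly stronger input than anything correctly stated in the paper's preliminaries. The paper's own proof avoids all of this machinery: it takes $\psi\in\Stab_\Gamma(S)$ pseudo-Anosov on $\hat{S}$ (fullness) and the kernel element $\phi$; if no power of $\phi$ is pseudo-Anosov on $\hat{S}$, then $\phi\psi^n\phi^{-1}$ is a pseudo-Anosov independent of $\psi$ (via Proposition~\ref{prop:pa-centralizer}), and the commutator $\phi\psi^n\phi^{-1}\psi^{-n}$, which lies in $\ker(f)\cap\Stab_\Gamma(S)$ because $\ker(f)$ is normal, is pseudo-Anosov for $n$ large, since high powers of independent pseudo-Anosovs generate purely pseudo-Anosov groups (Fujiwara). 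You should either switch to that commutator argument or replace the appeal to the paper's dichotomy by an explicit citation of Ivanov's irreducible-subgroup theorem; as written, the key step of your proof rests on a statement that fails in the generality in which you use it.
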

\begin{proof}
  Since $\Gamma$ is full in  $S$, there is an element $\psi \in \Stab_\Gamma(S)$ which
  is pseudo-Anosov as a mapping class on $\hat{S}$. Furthermore, by
  assumption, there is an element $\phi \in \ker(f) \cap
  \Stab_\Gamma(S)$, none of whose powers are multitwists about $\partial
  S$.  Thus, $\phi$ defines an infinite order mapping class on
  $\hat{S}$. If $\phi$ or any power of it is pseudo-Anosov, we are already done.
  
  Otherwise, consider $\phi \psi^n \phi^{-1}$. As a mapping class of
  $\hat{S}$ this is pseudo-Anosov. In fact, it is an independent
  pseudo-Anosov to $\psi$: by Lemma~\ref{prop:pa-centralizer}, any
  infinite order element in the centralizer of a pseudo-Anosov has a
  power which is a pseudo-Anosov itself.

  Thus, for any $n>0$, the element $\phi\psi^n\phi^{-1}\psi^{-n}$ is
  contained in $\ker(f)$ since the latter is normal. Once $n$ is large
  enough, it will also be pseudo-Anosov, since large powers of
  independent pseudo-Anosovs on $\hat{S}$ generate a purely
  pseudo-Anosov group (compare \cite{Fujiwara}).
\end{proof}
The following is the core technical result of this section.
\begin{theorem}\label{thm:complexity-reduction-kernel}
  Let $S \subset \Sigma$ be an essential subsurface and suppose that
  $\Gamma < \Mcg(\Sigma)$ is full in $S$. Let $f:\Gamma \to
  \Mcg(\Sigma')$ be an injection into another mapping class group.

  Then there is a complementary component $Y$ of the canonical
  reduction system $C(f(\mathrm{Stab}_\Gamma(S)))$ and
  a finite index subgroup $\Gamma'<\mathrm{Stab}_{\Gamma}(S)$, so that the induced map
  \[ f: \Gamma' \to \Mcg(Y) \]
  is an injection. The kernel of the induced map
  $\hat{f}:\Gamma' \to \Mcg(\hat{Y})$ consists of multitwists about the boundary of $S$.
\end{theorem}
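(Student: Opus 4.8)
The plan is to transport the pseudo-Anosov dynamics on $\hat S$ to a single complementary component on the target side. First I would pass to a finite index subgroup $\Gamma' \le \Stab_\Gamma(S)$ which is pure and whose image $f(\Gamma')$ is pure and fixes every component of its canonical reduction system; by Remark~\ref{rem:abundent-passes-fi}, $\Gamma'$ is again full in $S$. Write $C=C(f(\Stab_\Gamma(S)))$ with complementary components $Y_1,\dots,Y_m$, and for each $j$ let $\rho_j\colon \Gamma' \to \Mcg(\hat Y_j)$ be $f$ followed by restriction to $Y_j$ and cusping off. The kernel of $\prod_j\rho_j$ is $K=\{\gamma\in\Gamma' : f(\gamma)\text{ is a multitwist about }C\}$, which is normal in $\Gamma'$ and, being carried by $f$ into the abelian group of multitwists about $C$, is abelian.

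For a single chosen component $Y$, Proposition~\ref{prop:abundent-kernels} applied to $\rho_Y$ (legitimate since $\Gamma'$ is full in $S$) shows that either every element of $\ker(\rho_Y)$ has a power which is a multitwist about $\partial S$, or $\ker(\rho_Y)$ contains an element that is pseudo-Anosov on $\hat S$. The whole kernel assertion therefore reduces to the single statement
\[(\star)\qquad \ker(\rho_Y)\text{ contains no element which is pseudo-Anosov on }\hat S.\]
The analogue of $(\star)$ for $K$ (dying on \emph{all} components simultaneously) is immediate: were $\omega\in K$ pseudo-Anosov on $\hat S$, then by normality $\psi_1^N\omega\psi_1^{-N}\in K$ for a pseudo-Anosov $\psi_1$ of $\hat S$, and for large $N$ this conjugate is an independent pseudo-Anosov on $\hat S$, hence does not commute with $\omega$ — contradicting that $K$ is abelian.

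To locate $Y$, I would use fullness to choose independent pseudo-Anosovs $\psi_1,\psi_2\in\Gamma'$ on $\hat S$ and, after replacing them by large powers, a free rank two subgroup $F=\langle\psi_1,\psi_2\rangle$ all of whose nontrivial elements are pseudo-Anosov on $\hat S$. Since $F$ is non-abelian free, $f(F)$ meets the abelian normal group of multitwists about $C$ trivially, so $\prod_j\rho_j$ is injective on $F$, i.e. $\bigcap_j\ker(\rho_j|_F)=1$. As any two nontrivial normal subgroups of a non-abelian free group intersect nontrivially, the normal subgroups $\ker(\rho_j|_F)\triangleleft F$ cannot all be nontrivial; hence some single $\rho_Y$ is injective on $F$. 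Fix this $Y$, which is then an active component.

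The main obstacle is to promote injectivity of $\rho_Y$ on the single subgroup $F$ to the full statement $(\star)$, i.e. to show that \emph{all} of the $\hat S$-pseudo-Anosov dynamics is detected on the one component $\hat Y$. Here I would use that $\ker(\rho_Y)$ is normal in $\Gamma'$: an $\hat S$-pseudo-Anosov $\omega\in\ker(\rho_Y)$ would, together with its $\psi_1$-conjugates, generate a subgroup whose image under $r_S$ is a nonelementary normal subgroup of the nonelementary group $r_S(\Gamma')<\Mcg(\hat S)$ lying in the kernel, and the resulting tension with $\rho_Y$ being injective on $F$ should force a contradiction. The genuinely delicate point — and the step I expect to be hardest — is this concentration onto a single $Y$, since elements of $\ker(r_S)$ (acting trivially on $\hat S$ but not on $\Sigma-S$) must be prevented from enlarging $\ker(\rho_Y)$ beyond multitwists about $\partial S$; controlling this interaction is the heart of the argument. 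Granting $(\star)$, Proposition~\ref{prop:abundent-kernels} yields that $\ker(\rho_Y)$ is exactly the group of multitwists about $\partial S$, the reverse inclusion coming from the fact that multitwists about $\partial S$ are central in $\Stab_\Gamma(S)$, so their $f$-images are central in $f(\Gamma')$ and — the active components carrying pseudo-Anosovs — are multitwists about $C$ supported on $\partial Y$, hence trivial in $\Mcg(\hat Y)$. Finally, these boundary multitwists remain nontrivial in $\Mcg(Y)$ by Lemmas~\ref{lem:dehn-commute} and~\ref{lem:dehn-equal}, while $\Mcg(Y)\to\Mcg(\hat Y)$ kills only twists about the cusped boundary; this upgrades $(\star)$ to injectivity of $f\colon\Gamma'\to\Mcg(Y)$, a routine final step.
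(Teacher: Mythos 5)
Your setup (pure finite--index subgroup, restriction maps $\rho_j$ to the cusped complementary components, the abelian normal kernel $K$, and the reduction of everything to the single statement $(\star)$ via Proposition~\ref{prop:abundent-kernels}) is sound, and your device for locating a candidate component $Y$ is genuinely nice: a purely pseudo-Anosov free group $F$ on $\hat S$ meets $K$ trivially, and since finitely many nontrivial normal subgroups of a nonabelian free group must intersect nontrivially, some single $\rho_Y$ is injective on $F$. But the proof has a real gap exactly where you flag it: injectivity of $\rho_Y$ \emph{on $F$} does not yield $(\star)$, i.e.\ it does not prevent $\ker(\rho_Y)$ from containing an $\hat S$--pseudo-Anosov $\omega$ with $F\cap\ker(\rho_Y)=1$ (a normal subgroup of $\Gamma'$ can easily miss a fixed free subgroup). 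Your sketched repair --- taking the normal closure of $\omega$ and hoping for ``tension'' with injectivity on $F$ --- has no actual contradiction mechanism: $r_S(\Gamma')$ is not free, so a nonelementary normal subgroup of it need not meet $r_S(F)$, and nothing in the sketch rules this out. (Your argument does succeed in the special case where $\omega$ shares a power with some $\psi\in F$ up to boundary multitwists, since then $\rho_Y(\psi^b)=1$; but the independent case, which is generic, is untouched.)

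The paper avoids this promotion problem entirely by a different mechanism: it splits $\Sigma'$ into \emph{two} blocks $\Sigma_1,\Sigma_2$ of complementary components and argues by induction on the number of components, so it only ever needs to kill pseudo-Anosov kernel elements on \emph{both} blocks simultaneously. If $\phi_i\in\ker(\hat f_i)$ are both pseudo-Anosov on $\hat S$, then $f(\phi_1)$ and $f(\phi_2)$ are supported, modulo central twists about $C$, on the disjoint pieces $\Sigma_2$ and $\Sigma_1$, hence commute; injectivity of $f$ forces $[\phi_1,\phi_2]=1$, so after passing to powers $\phi_2=\phi_1 m$ with $m$ a central multitwist about $\partial S$, and choosing $\rho$ with $[\phi_1,\rho]\neq 1$ yields a contradiction with centrality of $m$. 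This two-sided commutator trick is the ingredient your single-component approach lacks, and I see no way to complete $(\star)$ as stated without something like it. A secondary glossed point: your ``routine final step'' upgrading to injectivity of $f\colon\Gamma'\to\Mcg(Y)$ assumes that a nontrivial central multitwist about $C$ restricts nontrivially to $Y$, but such a multitwist could be supported on curves of $C$ not adjacent to $Y$; the paper's inductive two-block structure (together with passing to a torsion-free $\Gamma'$) is what controls this, and in your one-shot framework it would need a separate argument.
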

\begin{proof}
  We may assume that $f(\Gamma)$ is pure (otherwise, pass to a
  suitable finite index subgroup $\Gamma'$). Put
  $C = C(f(\mathrm{Stab}_\Gamma(S)))$.  If there is only one
  complementary component of $C$, there is nothing to prove for the
  first claim. The second claim follows since any $\phi$ so that
  $\hat{f}(\phi)=1$ is contained in the center of $\mathrm{Stab}_\Gamma(S)$.

  \smallskip
  Hence, suppose that there is more than one complementary component of $C$;
  let $\Sigma_1, \Sigma_2$ be two disjoint nonempty unions of complementary components
  of $C$ whose union is all of $\Sigma'$. Denote by $\widehat{\Sigma_i}$ the surface
  obtained from $\Sigma_i$ by cusping off the boundary components corresponding to 
  curves in $C$.

  The injection $f$ induces maps
  \[ f_i : \Stab_\Gamma(S) \to \Mcg(\Sigma_i), \quad i=1,2 \]
  and 
  \[ \hat{f}_i : \Stab_\Gamma(S) \to \Mcg(\widehat{\Sigma_i}), \quad i=1,2. \]
  By induction, it suffices to show that one of the $f_i$ is injective.

  We define the product map
  \[ \hat{f}=\hat{f}_1\times\hat{f}_2:\Stab_\Gamma(S) \to \Mcg(\widehat{\Sigma_1}) \times
  \Mcg(\widehat{\Sigma_2}). \] 
  Suppose that $\phi$ is such that $\hat{f}(\phi) = 1$. Then
  $f(\phi)$ is a multitwist about $C$, and thus commutes with every
  element in $f(\Stab_\Gamma(S))$. As $f$ is injective, this implies
  that $\phi$ commutes with every element of $\Stab_\Gamma(S)$, and
  thus $\phi$ has a power which is a multitwist about the boundary of $S$
  by Lemma~\ref{lem:abundent-centers}.

  Suppose now that $\ker(\hat{f}_i)$ contains elements which do not
  have powers which are multitwists about $\partial S$ for both
  $i=1,2$. We let $\phi_1, \phi_2$ be mapping classes so that
  \begin{enumerate}
  \item $\phi_i \in \ker(\hat{f}_i)$.
  \item $\phi_i$ is pseudo-Anosov on $S$.
  \end{enumerate}
  whose existence is guaranteed by Proposition~\ref{prop:abundent-kernels}. 

  Then, $f_2(\phi_1)$ is a multitwist about $\partial \Sigma_2$ and 
  $f_1(\phi_2)$ is a multitwist about $\partial \Sigma_1$ and therefore
  \[ 1 = [f(\phi_1), f(\phi_2)] = f([\phi_1, \phi_2]). \]
  Since $f$ is injective this implies that $\phi_1$ and $\phi_2$ commute. Thus,
  $\phi_1, \phi_2$, seen as mapping classes of $\hat{S}$, are commuting
  pseudo-Anosovs and thus powers of a common pseudo-Anosov of
  $\hat{S}$. By passing to powers we may therefore assume that $\phi_1
  = \psi T_1, \phi_2 = \psi T_2$ for some $\psi$
  pseudo-Anosov on $\hat{S}$ and $T_1, T_2$ multitwists about $\partial S$. 
  In other words, $\phi_2 = \phi_1 m$ for some $m$ in the center of $\Stab_\Gamma(S)$.

  If $\phi_1 = \phi_2$, then $\hat{f}(\phi_1)=1$ and hence $f(\phi_1)$ is
  central. By injectivity of $f$ this would imply that $\phi_1$ is
  central, which is impossible by Lemma~\ref{lem:abundent-centers}.

  Otherwise, $\phi_1^{-1}\phi_2 = m \neq 1$ is a nontrivial central element.  
  Since $\Gamma$ is full in $S$, there is an element $\rho$ so that
  \[ [\phi_1, \rho] \neq 1 \] (there are independent pseudo-Anosovs,
  hence not every element can commute with $\phi_1$).  
  By injectivity of $f$, we therefore have that
  \[ [f(\phi_1), f(\rho)] \neq 1 \]
  Since the kernel of $\hat{f}$ is central, this implies that
  \[ [\hat{f}(\phi_1), \hat{f}(\rho)] \neq 1 \]
  As $\hat{f}_1(\phi_1) = 1$ this means that 
  \[ [\hat{f}_2(\phi_1), \hat{f}_2(\rho)] \neq 1 \]
  But since $\hat{f}_2(\phi_1) = \hat{f}_2(m^{-1})$, this would imply that $\hat{f}_2(m)$
  and $\hat{f}_2(\rho)$ do not commute, contradicting the fact that $m$ is central.

  \smallskip 
  This contradiction shows that we may assume (up to
  relabeling) that $\ker(\hat{f}_1)$ contains only contains elements
  which do have a power which is a multitwist about $\partial S$.

  Suppose now that $m$ is an element of $\ker(f_1)$. By the above, it
  has a power $m^k$ which is a multitwist about $\partial S$, and therefore
  central in $\mathrm{Stab}_\Gamma(S)$. Hence, either $m$ is finite order,
  or $f(m^k)$ is nontrivial multitwist about $C(f(\mathrm{Stab}_\Gamma(S)))$. In the
  latter case $f_1(m) \neq 1$.
  Taking $\Gamma'$ so that $\mathrm{Stab}_\Gamma(S)$ is torsion-free therefore
  shows the theorem.
\end{proof}

Using this result we can show that, under suitable assumptions, images of 
Dehn twists are roots of Dehn twists. In the proof we require the notion
of the \emph{complexity}
$\xi(F)$ of a finite type surface. Namely, $\xi(F) = 3g(F) + 2b(F) - 3$ is the
number of curves in a pants decomposition for $F$, where $g(F)$ is the genus
and $b(F)$ is the number of boundary components and cusps.
\begin{corollary}\label{cor:nonsep-twists-to-nonsep-twists}
  Let $\gamma$ be a non-separating simple closed curve on $\Sigma$, and let $S$ be 
  the complement of $\gamma$. 
  Suppose that $\Gamma$ is abundant in $S$, and that $f:\Gamma \to \Mcg(\Sigma)$
  is any injection. Then $f(T_\gamma)$ has a power which is a Dehn twist about some
  non-separating curve $\delta$.
\end{corollary}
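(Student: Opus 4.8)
The plan is to apply Theorem~\ref{thm:complexity-reduction-kernel} to the essential subsurface $S=\Sigma\setminus\gamma$ and then to pin down the reduction system it produces on the target by a complexity count, using throughout that $T_\gamma$ is central in $\Stab_\Gamma(S)$. Since $\Gamma$ is abundant in $S$ it is in particular full in $S$, so the theorem yields a complementary component $Y$ of $C\coloneq C(f(\Stab_\Gamma(S)))$ and a finite index subgroup $\Gamma'<\Stab_\Gamma(S)$ with $f\colon\Gamma'\to\Mcg(Y)$ injective and $\ker\big(\hat f\colon\Gamma'\to\Mcg(\hat Y)\big)$ consisting of multitwists about $\partial S=\gamma$, i.e.\ of powers of $T_\gamma$. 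After replacing $T_\gamma$ by a power lying in $\Gamma'$ (harmless, as the conclusion concerns a power of $f(T_\gamma)$) I may assume $T_\gamma\in\Gamma'$ and that $f(\Gamma')$ is pure. As $\Gamma'$ is full in $S$ it is non-abelian; hence, by the stated property of the canonical reduction system of $f(\Stab_\Gamma(S))$, the component $Y$ must carry a pseudo-Anosov $\psi\in f(\Gamma')$ on $\hat Y$ --- the alternative, that every pure element restricts to the identity on $\hat Y$, would make $f(\Gamma')$ a group of boundary multitwists and hence abelian.

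The first substantive step is to show that $\hat f(T_\gamma^N)=1$ for some $N$, that is, that $f(T_\gamma^N)$ is a multitwist about $\partial Y$. Because anything preserving $S$ preserves $\gamma=\partial S$, the twist $T_\gamma$ is central in $\Stab_\Gamma(S)$, so $\hat f(T_\gamma)$ is central in $\hat f(\Gamma')$ and in particular commutes with $\psi$. If $\hat f(T_\gamma)$ had infinite order, Proposition~\ref{prop:pa-centralizer} would force a power of it to be a pseudo-Anosov $\psi'$ on $\hat Y$. Abundance supplies pants twists $T_1,T_2\in\Stab_\Gamma(S)$ whose images in $\Mcg(\hat S)$ are independent twists about disjoint curves; being central, $\hat f(T_\gamma)$ commutes with $\hat f(T_1),\hat f(T_2)$, so both lie in the centralizer of $\psi'$, which is virtually cyclic by Proposition~\ref{prop:pa-centralizer}. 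But modulo $\langle T_\gamma\rangle$ the elements $T_1,T_2$ span a $\Z^2$, and infinite order of $\hat f(T_\gamma)$ makes $\hat f$ injective on $\Gamma'$; a virtually cyclic group can contain neither a $\Z^2$ nor two elements with an infinite-order commutator, so this is impossible. Hence $\hat f(T_\gamma)$ has finite order and, enlarging $N$, $\hat f(T_\gamma^N)=1$.

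Next I would run the complexity count. Abundance gives a full system of pants twists $T_1,\dots,T_k\in\Stab_\Gamma(S)$ with $k=\xi(\hat S)$ whose images in $\Mcg(\hat S)$ are independent; the restriction map to $\Mcg(\hat S)$ factors through $\Gamma'/\ker\hat f$, and after replacing each $T_i$ by a fixed power (to absorb the now finite-order central commutators $\hat f([T_i,T_j])$) their images generate a free abelian subgroup of rank $k$ of $\Mcg(\hat Y)$. By the theorem of Birman--Lubotzky--McCarthy the maximal rank of an abelian subgroup of $\Mcg(\hat Y)$ equals its complexity, so $\xi(\hat Y)\ge\xi(\hat S)=\xi(\Sigma)-1$, the last equality because cutting a closed surface along one non-separating curve drops the number of pants curves by one. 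Cutting $\Sigma$ along $C$ gives the additivity $\xi(\Sigma)=|C|+\sum_Z\xi(\hat Z)$ over complementary components $Z$; since $Y$ is one of them, $|C|+\sum_{Z\neq Y}\xi(\hat Z)\le 1$. If $C=\emptyset$ then $\hat Y=\Sigma$ and $\hat f=f$ is injective, contradicting that $f$ is injective while $T_\gamma^N\neq 1$; hence $|C|=1$, say $C=\{\delta\}$, and every other component has complexity $0$. Since an essential separating curve on $\Sigma$ bounds positive-genus pieces on both sides, $\delta$ must be non-separating and $Y=\Sigma\setminus\delta$ is the unique complementary component.

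To finish, $f(T_\gamma^N)$ is trivial in $\Mcg(\hat Y)$ and preserves $\delta$; as the two boundary components of $Y$ are parallel copies of $\delta$, the kernel of the capping map from the pure, side-preserving stabilizer of $\delta$ to $\Mcg(\hat Y)$ is generated by $T_\delta$, so $f(T_\gamma^N)=T_\delta^m$ with $m\neq0$ by injectivity of $f$. Thus $f(T_\gamma)^N=T_\delta^m$ is a power of the Dehn twist about the non-separating curve $\delta$, as claimed. I expect the main obstacle to be controlling $C$ on the target: the whole argument rests on the inequality $\xi(\hat Y)\ge\xi(\Sigma)-1$, which simultaneously forbids superfluous reduction curves, excludes the separating case, and forces $Y$ to be exactly the complement of a single non-separating curve. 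Establishing $\hat f(T_\gamma^N)=1$ through the centralizer dichotomy, so that $f(T_\gamma^N)$ is genuinely localized along $\partial Y$, is the other delicate point.
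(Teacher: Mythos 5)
Your proposal is correct and takes essentially the same route as the paper: apply Theorem~\ref{thm:complexity-reduction-kernel} to $S = \Sigma\setminus\gamma$, use abundance to produce a rank-$(\xi(\Sigma)-1)$ free abelian subgroup of $\Stab_\Gamma(S)$ meeting $\langle T_\gamma\rangle$ trivially, and conclude by a complexity count that $Y$ must be the complement of a single non-separating curve $\delta$, whence the central element $f(T_\gamma^N)$ is a power of $T_\delta$. You spell out details the paper leaves implicit (that $\hat f(T_\gamma)$ has finite order via the pseudo-Anosov centralizer dichotomy, the additivity of complexity ruling out $C=\emptyset$ and separating $\delta$, and the identification of the kernel of the capping map), but this is elaboration of the same argument rather than a different approach.
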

\begin{proof}
  Using that $\Gamma$ is full in $S$, we can 
  replace $\Gamma$ with the finite index subgroup $\Gamma'$ from
  Theorem~\ref{thm:complexity-reduction-kernel} and there is then
  a complementary component $Y \subset \Sigma$ and an injective map
  \[ \mathrm{Stab}_\Gamma(S) \to \Mcg(Y) \]
  so that the induced map
  \[ \mathrm{Stab}_\Gamma(S) \to \Mcg(\hat{Y}) \]
  has a kernel consisting only of twists about $\gamma$.  Using that
  $\Gamma$ is abundant in $S$, there is a free Abelian group of rank
  $\xi(\Sigma)-1$ in $\mathrm{Stab}_\Gamma(S)$ which does not contain
  any twist about $\gamma$, and therefore there is such an Abelian
  group in in $\Mcg(\hat{Y})$. Thus, $\hat{Y}$ has at least complexity
  $\xi(\Sigma)-1$, but is obtained by taking a subsurface of $\Sigma$
  and cusping off boundaries. This is only possible if $Y$ is the
  complement of a single non-separating curve $\delta$, and thus the
  canonical reduction system of $f(\mathrm{Stab}_\Gamma(S))$ is a
  single curve. This implies that some power of $T_\gamma$ -- which
  maps to a central element in $f(\mathrm{Stab}_\Gamma(S))$ -- is a
  Dehn twist about $\delta$.
\end{proof}

%
In fact, by induction, we also obtain the following result. For its statement, recall that
a \emph{cut system} is a multicurve $\alpha_1, \ldots, \alpha_g$ on a 
surface so that the complement $\Sigma-(\alpha_1\cup\dots\cup\alpha_g)$ is connected
and has genus $0$.
\begin{corollary}\label{cor:cut-to-cut}
  Suppose that $\alpha_1, \ldots, \alpha_g$ is a cut system for $\Sigma$. Assume
  that $\Gamma$ is abundant in $\Sigma-(\alpha_1 \cup \dots \cup \alpha_k)$ for
  all $1 \leq k \leq g$.

  Let $f:\Gamma \to \Mcg(\Sigma)$ be injective for some $\Gamma < \Mcg(\Sigma)$.
  Then $f(T_{\alpha_i})$ have powers which are Dehn twists about a cut system in $\Sigma$.
\end{corollary}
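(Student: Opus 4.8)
The plan is to induct on the genus $g$, using Corollary~\ref{cor:nonsep-twists-to-nonsep-twists} as the base case (a single non-separating curve is already a cut system) and, after peeling off one curve, setting up a strictly lower-genus instance of the statement being proved. So assume the result for all genera less than $g$, and let $\alpha_1,\dots,\alpha_g$ be a cut system with $\Gamma$ abundant in every $S_k=\Sigma-(\alpha_1\cup\dots\cup\alpha_k)$.

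First I would apply Corollary~\ref{cor:nonsep-twists-to-nonsep-twists} to $\gamma=\alpha_1$ and $S=S_1$. This yields a non-separating curve $\delta_1$ with a power of $f(T_{\alpha_1})$ equal to $T_{\delta_1}$, and, from its proof, identifies the relevant component $Y_1$ as the complement $\Sigma-\delta_1$ together with an injection $\Stab_\Gamma(S_1)\to\Mcg(Y_1)$ whose composition $\hat f_1$ with the cusping map $r_{Y_1}\colon\Mcg(Y_1)\to\Mcg(\hat Y_1)$ has kernel the multitwists about $\partial S_1=\{\alpha_1\}$. The crucial observation is that this kernel is exactly $\ker r_{S_1}=\langle T_{\alpha_1}\rangle$. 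Hence $\hat f_1$ descends to an injective \emph{self-map} $F_1$ of (a finite index subgroup of) $r_{S_1}(\Stab_\Gamma(S_1))<\Mcg(\hat S_1)$, after fixing a homeomorphism $\hat S_1\cong\hat Y_1$ (both are genus $g-1$ surfaces with two cusps). The curves $\alpha_2,\dots,\alpha_g$ descend to a cut system of $\hat S_1$, and abundance of $\Gamma$ in $S_2,\dots,S_g$ passes to abundance of $r_{S_1}(\Stab_\Gamma(S_1))$ in the corresponding complements, since cusping off $\alpha_1$ (which is disjoint from the relevant supports) destroys neither the independent pseudo-Anosovs nor the pants twists; Remark~\ref{rem:abundent-passes-fi} lets me shrink to a finite index subgroup freely.

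Now I would invoke the inductive hypothesis for $F_1$ on $\hat S_1$: the elements $\hat f_1(T_{\alpha_i})$ have powers that are Dehn twists about a cut system $\beta_2,\dots,\beta_g$ of $\hat S_1\cong\hat Y_1$. Lifting the $\beta_i$ to curves $\delta_2,\dots,\delta_g\subset Y_1\subset\Sigma$ and adjoining $\delta_1$, the multicurve $D=\delta_1\cup\dots\cup\delta_g$ is a cut system of $\Sigma$: cutting along $\delta_1$ produces $Y_1$, and cutting $Y_1$ further along the cut system $\{\beta_i\}$ of $\hat Y_1$ leaves a connected genus $0$ surface, so the total complement is connected of genus $0$. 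Thus the geometric assembly of the cut system, and in particular the connectivity of the complement, is handled automatically by the induction. Disjointness and distinctness of the $\delta_i$ come for free: the $T_{\alpha_i}$ pairwise commute, so by Lemma~\ref{lem:dehn-commute} the $T_{\delta_i}$ commute and the $\delta_i$ are disjoint, while injectivity of $f$ on each $\langle T_{\alpha_i},T_{\alpha_j}\rangle\cong\Z^2$ forbids $\delta_i=\delta_j$ (otherwise both images would lie in a single cyclic twist group and satisfy a relation).

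The step I expect to be the main obstacle is upgrading the conclusion from the reduced surface to an honest statement in $\Sigma$. The inductive hypothesis controls $\hat f_1(T_{\alpha_i})$, that is, the restriction of $f(T_{\alpha_i})$ to $\hat Y_1$, but in $\Sigma$ a power of $f(T_{\alpha_i})$ is a priori only a multitwist $T_{\delta_i}T_{\delta_1}^{c_i}$, sheared along the peeled curve $\delta_1$, because $r_{Y_1}$ kills precisely the $T_{\delta_1}$ factor. To match Corollary~\ref{cor:nonsep-twists-to-nonsep-twists} literally one must show these shearing coefficients vanish, equivalently that $f(T_{\alpha_i})$ is a root of a \emph{single} twist rather than a genuine multitwist. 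I would attack this using Proposition~\ref{prop:pa-centralizer} and the commutation of $f(T_{\alpha_i})$ with $T_{\delta_1}=f(T_{\alpha_1})^{n_1}$ together with the reduction-system calculus, and by exploiting the extra twists supplied by abundance in $S_i$ whose images are already pinned down, to certify that $\delta_1\notin C(f(T_{\alpha_i}))$. Carrying out this last normalization carefully — rather than the cut-system assembly, which the induction makes routine — is where the real work lies.
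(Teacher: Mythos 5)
Your route is the paper's own: the published proof of this corollary is exactly the induction you describe, namely apply Corollary~\ref{cor:nonsep-twists-to-nonsep-twists} to $\alpha_1$, observe that Theorem~\ref{thm:complexity-reduction-kernel} hands you an injection of $\mathrm{Stab}_\Gamma(\Sigma-\alpha_1)$ into $\Mcg(\Sigma-\delta_1)$, and repeat using abundance in the nested complements. The only cosmetic difference is that the paper keeps the compact cut surface $\Sigma-\delta_1$ (so boundary twists stay alive and injectivity survives), while you pass to the cusped models via an identification $\hat S_1\cong\hat Y_1$ and re-enter the statement as a self-map; both variants work, and your assembly of the cut system (disjointness via Lemma~\ref{lem:dehn-commute}, distinctness via injectivity on $\Z^2$, connectedness and genus $0$ by cutting inductively) is fine.

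The problem is the step you defer. The obstacle you flag is genuine: from the second stage on, a central power of the image of $T_{\alpha_i}$ in $\Mcg(Y_1)$ is a priori only a multitwist $T_{\delta_i}^{m}T_{c_1}^{a}T_{c_2}^{b}$ about $\delta_i$ and the boundary of $Y_1$, i.e.\ $f(T_{\alpha_i}^{n})=T_{\delta_i}^{m}T_{\delta_1}^{c}$ in $\Mcg(\Sigma)$, and the corollary claims $c=0$. You do not resolve this; you only announce a plan, and the specific resources you name cannot close it. Commutation of $f(T_{\alpha_i})$ with $T_{\delta_1}=f(T_{\alpha_1})^{n_1}$ is vacuous, since any multitwist containing a $T_{\delta_1}$ factor commutes with $T_{\delta_1}$; and every twist supplied by abundance in $S_2,\ldots,S_g$ is supported in the complement of $\alpha_1$, so its image commutes with any $\delta_1$-shear and constrains $c$ not at all. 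What actually kills $c$ is an element of $\Gamma$ that commutes with $T_{\alpha_i}$ but \emph{crosses} $\alpha_1$: its image is a root of a twist about a curve crossing $\delta_1$ (by Corollary~\ref{cor:nonsep-twists-to-nonsep-twists} again, plus Lemma~\ref{lem:dehn-commute} applied to non-commutation with $f(T_{\alpha_1})^{n_1}$), and commuting with $T_{\delta_i}^{m}T_{\delta_1}^{c}$ then forces $c=0$. The hypotheses of the corollary as literally stated do not supply such elements — nothing in them prevents $\Gamma\le\Stab(\alpha_1)$, in which case a character $\tau:\Gamma\to\Z$ vanishing on $T_{\alpha_1}$ but not on a power of $T_{\alpha_2}$ yields an injection $\phi\mapsto\phi\,T_{\alpha_1}^{\tau(\phi)}$ exhibiting exactly the shear you fear — but they do exist in the intended application, where $\Gamma$ is finite index in $\Hg$ or $\Tg$ and Lemma~\ref{lem:fi-implies-fullish} provides a twist power about a meridian crossing $\alpha_1$ and disjoint from $\alpha_i$. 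For what it is worth, the paper's three-line proof glosses over this point entirely, so your attempt matches the paper and, to its credit, isolates the real subtlety; but as a proof of the stated corollary it is incomplete at precisely the step you leave open, and the repair needs input beyond what your sketch invokes.
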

\begin{proof}
  By the previous Corollary~\ref{cor:nonsep-twists-to-nonsep-twists},
  $f(T_{\alpha_1})$ has a power which is a Dehn twist about some
  non-separating $\delta_1$, and furthermore $f$ induces an injection of
  $\mathrm{Stab}_\Gamma(\Sigma-\alpha_1)$ into
  $\Mcg(\Sigma-\delta_1)$. By the assumption on abundance, we can continue the argument
  inductively.
\end{proof}

\section{Rigidity of the disk graph}
\label{sec:disk-rigidity}
Recall that $V$ is a handlebody of genus $g$, and we have identified
the boundary $\partial V$ of $V$ with a surface $\Sigma$ of genus $g$.

The \emph{curve graph of $\Sigma$} is the simplicial complex
$\mathcal{C}(\Sigma)$ whose $k$--simplices correspond to multicurves
with $k+1$ components.  The \emph{disk graph $\mathcal{D}(V)$ of $V$}
is the full sub-complex of the curve graph $\mathcal{C}(\Sigma)$
spanned by the meridians for $V$.  Explicitly, $k$--simplices of
$\mathcal{D}(V)$ correspond to multicurves with $k+1$ components, each
of which is a meridian. We will usually identify a multicurve with the simplex
of $\mathcal{C}(\Sigma)$ or $\mathcal{D}(\Sigma)$ that it defines.
The following is obvious from the definitions.
\begin{lemma}\label{lem:links}
  Let $\alpha_1,\ldots,\alpha_k$ be a multicurve on $\Sigma$, and let
  $Y_1,\ldots,Y_l$ be its complementary components. Then the link of
  the simplex $\Delta$ defined by $\alpha_1,\ldots,\alpha_k$
  \[ \mathrm{lk}(\Delta) = \mathcal{C}(Y_1) * \dots * \mathcal{C}(Y_k) \]
  is the join of the curve graphs of the $Y_i$.
\end{lemma}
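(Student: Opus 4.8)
The plan is to unwind the definition of the link in the simplicial complex $\mathcal{C}(\Sigma)$ and match it term by term against the definition of the join; as the paper notes, this is essentially a matter of chasing definitions. Recall that $\mathrm{lk}(\Delta)$ consists of all simplices $\sigma$ of $\mathcal{C}(\Sigma)$ which share no vertex with $\Delta$ and for which $\sigma * \Delta$ is again a simplex of $\mathcal{C}(\Sigma)$. Since a simplex of $\mathcal{C}(\Sigma)$ is precisely a multicurve, the requirement that $\sigma * \Delta$ be a simplex says exactly that the curves of $\sigma$, together with $\alpha_1,\ldots,\alpha_k$, form a multicurve. (Here I write $Y_1,\ldots,Y_l$ for the complementary components, and correspondingly aim at the join $\mathcal{C}(Y_1) * \cdots * \mathcal{C}(Y_l)$.)

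First I would identify the vertices. A vertex of $\mathrm{lk}(\Delta)$ is a curve $\gamma$ disjoint from every $\alpha_j$ and not isotopic to any $\alpha_j$. Realizing $\gamma$ disjointly from $\alpha_1\cup\cdots\cup\alpha_k$, it lies in a single complementary component $Y_i$; being essential on $\Sigma$ and not isotopic to any $\alpha_j$, it defines an essential, non-peripheral curve of $Y_i$, hence a vertex of $\mathcal{C}(Y_i)$. Conversely, every vertex of some $\mathcal{C}(Y_i)$ arises this way. This sets up a bijection between the vertex set of $\mathrm{lk}(\Delta)$ and the disjoint union of the vertex sets of the $\mathcal{C}(Y_i)$, which is exactly the vertex set of the join.

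It remains to see that this vertex bijection is a simplicial isomorphism, and here I would invoke that $\mathcal{C}(\Sigma)$ is a flag complex: a collection of pairwise disjoint, pairwise non-isotopic curves automatically spans a simplex, so the complex is determined by its $1$--skeleton and it suffices to compare edges. Two link-vertices $\gamma,\gamma'$ are joined by an edge precisely when they are disjoint and non-isotopic. If they lie in a common $Y_i$, this is exactly adjacency in $\mathcal{C}(Y_i)$; if they lie in distinct components $Y_i, Y_{i'}$, they are automatically disjoint and non-isotopic and hence always adjacent. This is precisely the adjacency relation in $\mathcal{C}(Y_1) * \cdots * \mathcal{C}(Y_l)$, so the two flag complexes have matching $1$--skeleta and therefore coincide.

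The only points needing care — and the closest thing to an obstacle — are the bookkeeping about peripheral curves (a link-vertex must not be isotopic to any $\alpha_j$, which matches the exclusion of boundary-parallel curves from $\mathcal{C}(Y_i)$) and the low-complexity components, where a factor $\mathcal{C}(Y_i)$ may be empty; such empty factors simply contribute nothing to the join and cause no difficulty.
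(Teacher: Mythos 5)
Your proof is correct, and it follows exactly the route the paper has in mind: the paper offers no argument beyond declaring the lemma ``obvious from the definitions,'' and your write-up is precisely that definitional unwinding (vertex identification via complementary components, then the flag property to reduce to edges), carried out carefully. The attention to peripheral curves, cross-component adjacency, and possibly empty factors $\mathcal{C}(Y_i)$ fills in the details the paper leaves implicit, including the typo in the paper's statement where the join should run over $Y_1,\ldots,Y_l$ rather than $Y_1,\ldots,Y_k$.
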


\smallskip 
In this section we show a combinatorial rigidity for the disk graph inside the curve
graph (compare also
\cite{Amarayona-Souto-powers} for a stronger definition of rigid subgraph). Recall from \cite{Irmak-I} that a \emph{superinjective} map
between (subgraphs) of curve graphs is a simplicial map $\iota$ with
the property that $\iota(\alpha)$ and $\iota(\beta)$ are joined by an
edge if and only if $\alpha$ and $\beta$ are joined by an edge.
\begin{theorem}\label{thm:diskgraph-rigidity}
  Let $\iota: \mathcal{D}(V) \to \mathcal{C}(\Sigma)$ be a superinjective simplicial map.
  Then $\iota$ is induced by a mapping class of $\Sigma$: there is a mapping class
  $\phi \in \Mcg(\Sigma)$ so that $\iota(\alpha) = \phi(\alpha)$ for all simple
  closed curves $\alpha$.
\end{theorem}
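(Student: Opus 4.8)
The plan is to reconstruct the mapping class $\phi$ from the way $\iota$ acts on reduced disk systems and their links, and then to promote the locally defined homeomorphisms to a single global one using the connectivity of reduced disk systems. The starting observation is that reduced disk systems give an especially rigid piece of $\mathcal{D}(V)$: if $C = \{\alpha_1, \ldots, \alpha_g\}$ is a reduced disk system with complement $Y$ (a connected planar surface with $2g$ boundary components), then, by the fact that every curve disjoint from a reduced disk system is a meridian, the link of the $g$--simplex $C$ inside $\mathcal{D}(V)$ coincides with its link inside $\mathcal{C}(\Sigma)$, namely the full curve graph $\mathcal{C}(Y)$. So on the star of every reduced disk system, $\iota$ already looks like a superinjective map between full curve graphs.

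The first and main step is to show that $\iota$ preserves the topological type of a reduced disk system: I would prove that $\iota(C)$ is again a cut system, i.e. a $g$--component multicurve whose complement $Y^{*}$ is connected of genus $0$. Superinjectivity restricts $\iota$ to a superinjective map $\mathcal{C}(Y) \to \mathcal{C}(Y^{*})$ landing in the link of $\iota(C)$, and since cutting $\Sigma$ along any $g$--multicurve leaves pants--complexity $2g-3$, the surfaces $Y$ and $Y^{*}$ automatically have the same complexity. What is \emph{not} automatic — and what I expect to be the hard part — is detecting connectedness and genus zero of $Y^{*}$ purely combinatorially. Connectedness must be read off from the fact that $\mathcal{C}(Y)$ is not a nontrivial join, together with the way superinjectivity reflects disjointness: a splitting $Y^{*} = Y_1^{*} \sqcup Y_2^{*}$ would force the preimages of $\mathcal{C}(Y_1^{*})$ and $\mathcal{C}(Y_2^{*})$ to be totally disjoint subsystems of the connected surface $Y$. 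Genus zero must be detected through the absence of the configurations of curves (links decomposing as joins in the patterns characteristic of positive genus) that the analysis of \cite{Irmak-I} uses to pin down topological type. This is the step where the abundance of meridians in a planar complement — that every curve there is a meridian — is essential, and carrying it out carefully for all $g$ (likely under a genus bound such as $g \geq 3$) is the technical core.

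Granting topological-type preservation, the second step is local rigidity. With $Y^{*} \cong Y$ a sphere with $2g$ holes, the restriction $\iota\colon \mathcal{C}(Y) \to \mathcal{C}(Y^{*})$ is a superinjective map between curve graphs of homeomorphic surfaces, so by curve-graph rigidity for superinjective maps (\cite{Irmak-I}, valid once $2g$ is large enough) it is induced by a homeomorphism $\phi_C\colon Y \to Y^{*}$. Choosing an extension of $\phi_C$ over the cut locus produces a mapping class $\phi_C \in \Mcg(\Sigma)$ realizing $\iota$ on the entire star of $C$, that is, on $C$ itself and on every meridian disjoint from it.

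The final step is to glue these local solutions into one mapping class. By Lemma~\ref{cut-system-graph-connected} any two reduced disk systems are joined by a chain in which consecutive ones are disjoint; since $\iota$ preserves disjointness, comparing $\phi_C$ and $\phi_{C'}$ for disjoint $C, C'$ on the overlap of their stars shows that they agree (a homeomorphism is determined by its action on a filling family of curves), so a single $\phi \in \Mcg(\Sigma)$ realizes $\iota$ on every non-separating meridian. To finish I would observe that $\phi^{-1}\circ \iota$ then fixes every non-separating meridian; since these are abundant enough that any meridian — in particular any separating one — is determined by the non-separating meridians disjoint from it, $\phi^{-1}\circ\iota$ is the identity on all of $\mathcal{D}(V)$, giving $\iota = \phi$. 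An alternative to Steps~2--4 would be first to extend $\iota$ to a superinjective self-map of $\mathcal{C}(\Sigma)$ and then quote curve-graph rigidity directly, or to package topological-type preservation so as to invoke the main result of \cite{KS} on automorphisms of the disk graph; either way, Step~1 remains the crux.
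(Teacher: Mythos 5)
Your Step~1 is, in substance, the paper's first step, and your join argument is essentially the argument given there (the paper phrases it as: two curves whose images lie in different complementary components have disjoint images, violating superinjectivity, so the image of $\mathrm{lk}(C)\cong\mathcal{C}(\Sigma_{0,2g})$ lands in the curve graph of a single component $Y_1^*$). But note two things. First, this join argument alone does not prove connectedness of the complement of $\iota(C)$: it only confines the image to one component, and you must still rule out that other components exist. The paper finishes with a complexity count: superinjective maps are injective, so a maximal simplex of $\mathcal{C}(\Sigma_{0,2g})$ (with $2g-3$ vertices) maps to a simplex of $\mathcal{C}(Y_1^*)$, forcing $\xi(Y_1^*)\geq 2g-3$, while the total complexity of the complement of any $g$--multicurve in $\Sigma_g$ is exactly $2g-3$. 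Second, your plan to detect genus zero via Irmak-style join patterns is unnecessary: a $g$--component multicurve on a closed genus-$g$ surface with connected complement automatically has planar complement by Euler characteristic. So the part you flag as the technical core is both less and more than you think: genus is free, but the join argument needs the complexity count to close.

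The genuine gap is in your gluing step. The principle you invoke --- ``a homeomorphism is determined by its action on a filling family of curves'' --- is false: the curves disjoint from $C\cup C'$ do fill $\Sigma$, yet every multitwist about components of $C\cup C'$ fixes each of them, and in genus $2$ the hyperelliptic involution fixes every isotopy class of simple closed curves. Hence agreement of $\phi_C$ and $\phi_{C'}$ on the overlap of stars only shows $\phi_{C'}^{-1}\phi_C$ lies in a nontrivial group; in particular $\phi_C$ could differ from $\phi_{C'}$ by a twist about a component of $C$, which moves curves of $\mathrm{st}(C')$, so $\phi_C$ need not realize $\iota$ on $\mathrm{st}(C')$. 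Repairing this requires correcting by multitwists at each step of the chain from Lemma~\ref{cut-system-graph-connected} and checking coherence of these corrections --- a real argument, not a one-liner. A second, smaller gap is in your local step: a homeomorphism $Y\to Y^*$ inducing $\iota$ on links extends over the cut locus only if it respects the pairing of boundary components coming from the gluings along $C$ and $\iota(C)$, and nothing you say guarantees this. The paper avoids both problems by a different second half: it normalizes $\iota(C)=C$, shows reduced disk systems map to reduced disk systems (every curve disjoint from a reduced disk system is a meridian), hence meridians to meridians, proves surjectivity of $\iota$ on $\mathcal{D}(V)$ by applying the result of \cite{Bell-Margalit} to links, and then quotes the main result of \cite{KS} that simplicial automorphisms of the disk graph are induced by handlebody mapping classes --- the route you mention only in passing as an alternative, but which is what actually makes the endgame short.
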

We expect that the result is also true for injective simplicial maps $\iota$, but have not
explored this (since it is not used in the sequel).
\begin{proof}
The proof has various stages. In each stage, we might modify $\iota$ by a mapping
class to ensure additional properties.
\begin{description}
\item[Reduced disk systems map to cut systems] 
  Fix a reduced disk system $\alpha_1, \ldots, \alpha_g$ for $V$. This
  defines a $(g-1)$--dimensional simplex $\Delta$ of $\mathcal{D}(V)\subset
  \mathcal{C}(\Sigma)$ whose link in $\mathcal{C}(\Sigma)$ is completely contained in
  $\mathcal{D}(V)$, and is isomorphic to the curve graph $\mathcal{C}(\Sigma_{0,2g})$
  of a $2g$--holed sphere.

  Consider the image $\iota(\alpha_1), \ldots, \iota(\alpha_g)$. This
  is a $(g-1)$--dimensional simplex $\iota(\Delta)$ in
  $\mathcal{C}(\Sigma)$. We claim that $\iota(\alpha_1), \ldots,
  \iota(\alpha_g)$ is non-separating, hence a cut system. Namely,
  suppose that the complement had components $Y_1, \ldots,
  Y_k$. Choose some curve $\delta$ disjoint from $\alpha_1, \ldots,
  \alpha_g$, and assume that $\iota(\delta)$ is a curve contained in
  $Y_1$. Then, any $\delta'$ with $\delta\cap\delta' \neq \emptyset$
  satisfies $\iota(\delta) \subset Y_1$ as well (as otherwise,
  $\iota(\delta),\iota(\delta')$ would be disjoint, violating
  superinjectivity). This shows that the sub-complex of
  $\mathrm{lk}(\Delta)$ spanned by every vertex not contained in the
  star $\mathrm{st}(\delta)$ is mapped into $\mathcal{C}(Y_1)$ (under
  the identification given by Lemma~\ref{lem:links}).

  Next, choose some $\delta'$ to be distance at
  least $3$ from $\delta$ in $\mathcal{C}(\Sigma_{0,2g})$, and
  repeat the argument with $\delta'$ in place of $\delta$, to see that
  $\mathrm{st}(\delta)\cap \mathrm{lk}(\Delta)$ is also mapped into $\mathcal{C}(Y_1)$.

  Therefore, $\iota$ induces a superinjective simplicial map
  $\mathcal{C}(\Sigma_{0,2g})\cong \mathrm{lk}(\Delta) \to \mathcal{C}(Y_1)$. Since the
  dimension of the curve graph is one less than the complexity of the surface, and
  $Y_1 \subset \Sigma$, this is only possible if $Y_1$ is the only complementary
  component of $\iota(\alpha_1), \ldots, \iota(\alpha_g)$.

  Since the mapping class group of $\Sigma$ acts transitively on the
  set of cut systems, up to modifying $\iota$ by a mapping
  class, we may assume that $\iota(\alpha_1)=\alpha_1, \ldots, \iota(\alpha_g)=\alpha_g$
  and therefore $\iota(\alpha_1), \ldots, \iota(\alpha_g)$ is a reduced disk system for $V$.
\item[Reduced disk systems map to reduced disk systems] 
  Let $\beta_1, \ldots, \beta_g$ be a reduced disk system for $V$, which is disjoint
  from $\alpha_1, \ldots, \alpha_g$. Then,  $\iota(\beta_1), \ldots, \iota(\beta_g)$ 
  is a cut system, which is disjoint from the reduced disk system 
  $\iota(\alpha_1), \ldots, \iota(\alpha_g)$.

  Next, note that any curve disjoint from a
  reduced disk system for $V$ is also a meridian for $V$. This is simply a consequence of
  the fact that any simple closed curve on the sphere bounds a disk in the ball.
  Hence, $\iota(\beta_1), \ldots, \iota(\beta_g)$ is a reduced disk system as well.

  By Lemma~\ref{cut-system-graph-connected}, this inductively implies that
  $\iota(\beta_1), \ldots, \iota(\beta_g)$ is in fact a reduced disk system 
  for any reduced disk system $\beta_1, \ldots, \beta_g$ for $V$.
\item[Meridians map to meridians] Note that any meridian $\delta$ is
  disjoint from some reduced disk system $\beta_1, \ldots, \beta_g$,
  and hence $\iota(\delta)$ is curve disjoint from the reduced disk
  system $\iota(\beta_1), \ldots, \iota(\beta_g)$ -- and hence a
  meridian (by the same argument as above). This implies that $\iota$
  is now a superinjective self-map of the disk graph $\mathcal{D}(V)$.
\item[Surjectivity of $\iota$] As a first step, we prove that $\iota$
  is locally surjective in the following sense.  Suppose that
  $\alpha_1, \ldots, \alpha_g$ is a reduced disk system for $V$.  By the previous steps,
  $\iota(\alpha_1), \ldots, \iota(\alpha_g)$ is also a reduced disk system
  for $V$.  Arguing as above, $\iota$ induces a superinjective map between links,
  which can be interpreted as a superinjective map
  \[\mathcal{C}(\Sigma_{0,2g})\to\mathcal{C}(\Sigma_{0,2g}).\]
  By Theorem~2 of \cite{Bell-Margalit} such a
  map is induced by a mapping class, and thus in particular
  surjective.  This implies that every curve which is disjoint from
  $\iota(\alpha_1), \ldots, \iota(\alpha_g)$ lies in the image of $\iota$. 
  By Lemma~\ref{cut-system-graph-connected} and induction, this first implies
  that every reduced disk system for $V$ is the image of a reduced disk system
  under $\iota$. Since every meridian is disjoint from some reduced disk system,
  $\iota$ is in fact surjective.
\item[Rigidity] At this point, $\iota$ is a superinjective, surjective
  self-map of the disk graph, and therefore in particular a simplicial
  automorphism.  By the main result of \cite{KS}, it is therefore
  induced by a handlebody group element, finishing the proof.
\end{description}
\end{proof}

\section{Rigidity for the Twist and Handlebody Groups}
Recall that $\Tg < \Hg$ is defined to be the subgroup of $\Hg$ generated by
Dehn twist about meridians. By Luft's theorem \cite{luft}, $\Tg$ agrees with the
kernel of the canonical map $\Hg \to \mathrm{Out}(\pi_1(V))$ induced by
the action of homeomorphisms of $V$ on the fundamental group of $V$.

We begin with some generalities on the handlebody and twist groups.
\begin{lemma}\label{lem:fi-implies-fullish}
  Let $\Gamma < \Tg$ be finite index. Suppose that $S$ is a subsurface of $\Sigma$
  whose boundary consists of meridians, and so that $\xi(S) > 0$. Then $\Gamma$
  is abundant on $S$.
\end{lemma}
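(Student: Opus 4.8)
The plan is to prove the stronger statement that $\Tg$ \emph{itself} is abundant in $S$; the assertion for an arbitrary finite-index subgroup $\Gamma < \Tg$ then follows immediately from Remark~\ref{rem:abundent-passes-fi}. Unwinding the definition, I must produce two things: first, a pants decomposition of $\hat S$ all of whose curves are meridians, together with Dehn twists about them lying in $\Stab_{\Tg}(S)$; and second, two independent pseudo-Anosov elements of $\Stab_{\Tg}(S)$ when viewed as mapping classes of $\hat S$. The organizing principle is that everything can be manufactured from \emph{meridians contained in the interior of $S$}: for such a meridian $\mu\subset S$ the twist $T_\mu$ lies in $\Tg$ and is supported in $S$, hence $T_\mu\in\Stab_{\Tg}(S)$, and products and powers of such twists stay in $\Stab_{\Tg}(S)$.

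For the pants decomposition, I would first extend the meridian multicurve $\partial S$ to a reduced disk system $\mathcal R$ for $V$ (any multicurve of meridians extends to one). Since $\partial S\subset\mathcal R$, each curve of $\mathcal R$ lies either in $S$ or in its complement. I then refine $\mathcal R$ to a pants decomposition $P_\Sigma$ of $\Sigma$ by adding curves inside the (planar) complement $\Sigma-\mathcal R$; because every added curve is disjoint from the reduced disk system $\mathcal R$, it is automatically a meridian, so $P_\Sigma$ consists entirely of meridians. The curves of $P_\Sigma$ lying in the interior of $S$ then form a pants decomposition of $\hat S$ by meridians, and the corresponding Dehn twists lie in $\Stab_{\Tg}(S)$. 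This supplies the additional data required for abundance, once fullness is established.

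For fullness, the key topological fact I must prove is that the meridians contained in $S$ \emph{fill} $\hat S$. Granting this, applying a standard construction (raising to high powers the twists about two filling multicurves of meridians) yields pseudo-Anosov mapping classes of $\hat S$, and suitable powers and combinations produce two \emph{independent} such elements; since all of these are built from twists about meridians in $S$, they lie in $\Stab_{\Tg}(S)$ and act as desired on $\hat S$. To verify filling, observe that by the previous step any essential curve disjoint from every meridian would in particular be disjoint from the meridian pants decomposition $P$, hence isotopic to a component $p\in P$ and so itself a meridian; it therefore suffices to show that each meridian $p$ in $S$ is crossed by some other meridian in $S$. This transversality is exactly where I expect the main obstacle: the constructions above naturally produce meridians \emph{disjoint} from a given one, whereas I need one with positive intersection number. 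I would extract it from Lemma~\ref{lem:wave}, surgering $p$ against a reduced disk system arranged to meet it essentially and then checking that the resulting wave yields a meridian crossing $p$. This is also the step in which the hypothesis $\xi(S)>0$ genuinely enters, since it is what guarantees that $S$ carries enough meridians for the wave construction to return a curve transverse to $p$ rather than only disjoint ones.
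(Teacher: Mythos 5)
Your plan follows the same skeleton as the paper's proof: the paper asserts (without proof) that $S$ admits a pants decomposition by meridians and that there are two meridians in $S$ filling $S$, and then gets fullness from the Thurston--Penner construction plus a conjugation trick. The trouble is that both places where you try to \emph{prove} what the paper merely asserts contain genuine gaps. First, your parenthetical claim that any multicurve of meridians extends to a reduced disk system is false: a reduced disk system has connected complement, so it cannot contain any separating curve, whereas the hypotheses of the lemma allow $\partial S$ to have separating components (for instance $S$ can be one side of a separating meridian). Hence your construction of the meridian pants decomposition $P_\Sigma$ breaks down in exactly those cases. The correct route is local rather than global: cut $V$ along the disks bounded by $\partial S$; the piece $W$ containing $S$ is itself a handlebody whose meridians avoiding the scars are meridians of $V$ lying in $S$, and one builds the pants decomposition of $\hat{S}$ inside $W$ by induction on its genus.

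Second, and more seriously, the crossing step cannot be completed by the mechanism you propose. Wave surgery produces meridians \emph{disjoint} from the curve one surgers along: by the definition in Lemma~\ref{lem:wave}, the wave approaches $p$ from the same side at both endpoints, so the resulting meridian $a\cup b$ can be pushed entirely off $p$ --- the opposite of what you need. Worse, your suspicion that this is the main obstacle is confirmed by an actual counterexample to the step (and to the lemma as literally stated): take $S$ to be a once-holed torus whose boundary is a separating meridian, so $\xi(S)>0$. Every meridian of $V$ contained in $S$ can be surgered to bound a disk in the solid-torus piece cut off by $\partial S$, so all such meridians represent the single compressing slope and are pairwise isotopic; no meridian crosses any other, meridians do not fill $\hat{S}$, and in fact the image of $\Stab_{\Tg}(S)$ in $\Mcg(\hat{S})$ preserves that one curve class, so fullness fails outright. (The paper's own proof suffers from the same defect here, since its asserted filling pair of meridians does not exist for this $S$; in all of the paper's applications $S$ is the complement of part of a reduced disk system, where both asserted facts do hold.) So no argument can close this gap at the stated level of generality; a correct write-up must restrict the class of subsurfaces $S$ --- e.g.\ to complements of partial reduced disk systems --- and exploit that restriction when producing transverse meridians.
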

\begin{proof}
  We begin by noting that a Dehn twist about a meridian is an
  element of $\Hg$.  If $\xi(S) >0$, and $S$ is bounded by meridians,
  then there are two meridians $\alpha_1, \alpha_2$ in $S$ which fill
  $S$. Since the product $T_{\alpha_1}T_{\alpha_2}\in\Tg$ of Dehn
  twists about such curves is pseudo-Anosov, and supported in $S$,
  there is an element $\phi$ in $\mathrm{Stab}_\Gamma(S)$ which
  defines a pseudo-Anosov in $\hat{S}$. Since some power of
  $T_{\alpha_1}$ also lies in $\mathrm{Stab}_\Gamma(S)$, and does not commute with
  $\phi$, by Proposition~\ref{prop:pa-centralizer}, $\Gamma$ is full for $S$.

  Also, if $S$ is bounded by meridians, there is a pants decomposition of $S$
  consisting of meridians. This shows abundance.
\end{proof}

\begin{lemma}\label{lem:meridian-action}
  Suppose that $\phi \in \Hg$ is such that $\phi(\delta) = \delta$ for every
  meridian $\delta$. Then $\phi = \id$.
\end{lemma}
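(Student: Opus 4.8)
The plan is to reduce $\phi$ to a multitwist supported on a reduced disk system and then to eliminate that multitwist using the abundance of meridians. First I would fix a reduced disk system $\alpha_1,\ldots,\alpha_g$ for $V$. Since $\phi$ fixes every meridian, in particular $\phi(\alpha_i)=\alpha_i$ for all $i$, so after an isotopy $\phi$ preserves the multicurve $\alpha_1\cup\cdots\cup\alpha_g$. Because a reduced disk system has connected complement, $\phi$ preserves the complementary surface, a sphere with $2g$ boundary components $R \cong \Sigma_{0,2g}$, and induces a mapping class $\bar\phi\in\Mcg(R)$.

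The key observation is that every essential non-peripheral simple closed curve $\gamma$ in $R$ is disjoint from the reduced disk system, hence is itself a meridian, and is therefore fixed by $\phi$. Since $R$ is incompressible, two non-peripheral curves of $R$ that are isotopic in $\Sigma$ are already isotopic in $R$; thus $\bar\phi$ fixes the isotopy class of \emph{every} non-peripheral simple closed curve of $R$.

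Next I would invoke the classification of such maps. A mapping class of a holed sphere $\Sigma_{0,n}$ fixing every simple closed curve must fix every boundary component (a curve enclosing a distinguished pair of boundaries detects any nontrivial permutation) and, by the Alexander method \cite{FM}, is a product of Dehn twists about the boundary components. Regluing $R$ along the $\alpha_i$, this shows $\phi=T_{\alpha_1}^{n_1}\cdots T_{\alpha_g}^{n_g}$ is a multitwist supported on the reduced disk system. Finally, if some $n_i\neq 0$, then by Lemma~\ref{lem:wave} there is a meridian $\delta$ with $i(\delta,\alpha_i)\neq 0$; since a multitwist fixes only those curves disjoint from its support, $\phi(\delta)\neq\delta$, contradicting the hypothesis. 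Hence all $n_i=0$ and $\phi=\id$.

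The main obstacle is the holed-sphere step, and this is exactly where the genus enters: the implication ``fixes every curve $\Rightarrow$ product of boundary twists'' holds for $\Sigma_{0,n}$ with $n\geq 5$, i.e. for $2g\geq 6$, so the argument as stated requires $g\geq 3$. For $g=2$ one has $n=4$ and the hyperelliptic-type involution of $\Sigma_{0,4}$ fixes every curve, so this case would need separate treatment (reflecting the fact that in genus $2$ the hyperelliptic involution already fixes every isotopy class of curve). A secondary point to pin down carefully is the transfer of the curve-fixing property from $\Sigma$ to $R$ via incompressibility, together with the bookkeeping showing that boundary twists of $R$ reassemble into twists about the $\alpha_i$.
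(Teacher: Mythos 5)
Your argument is essentially the paper's proof: cut along a reduced disk system, observe that every essential curve in the complementary $2g$--holed sphere is a meridian and hence fixed, deduce that $\phi$ is a multitwist about the disk system, and kill the multitwist using meridians that cross the $\alpha_i$. The two write-ups differ only in that you make explicit the points the paper elides (isotopy in $\Sigma$ versus isotopy in the incompressible complement, and the Alexander-method step on the holed sphere).

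Your genus-$2$ caveat, however, is a genuine observation and not mere pedantry. The paper's sentence ``$\phi$ induces the identity seen as a mapping class of $\hat{S}$'' uses exactly the fact you isolate, and that fact fails for $\Sigma_{0,4}$. Indeed the lemma as stated is false for $g=2$: the hyperelliptic involution $\iota$ of $\Sigma_2$ is central in $\Mcg(\Sigma_2)$, so $T_{\iota(c)}=\iota T_c\iota^{-1}=T_c$ for every curve $c$, hence $\iota$ fixes every isotopy class of curve and in particular every meridian; moreover $\iota$ lies in $\mathcal{H}_2$, since rotation by $\pi$ about the horizontal axis of a standardly embedded genus-$2$ handlebody preserves the handlebody and restricts to $\iota$ on the boundary. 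So the restriction $g\geq 3$ (or, in genus $2$, the weaker conclusion $\phi\in\{\id,\iota\}$) is actually necessary, and your proof catches something the paper's proof glosses over. One small slip: Lemma~\ref{lem:wave} is not what produces a meridian crossing $\alpha_i$; that existence is elementary (band-sum two parallel, oppositely oriented copies of some $\alpha_j$, $j\neq i$, along an arc running through the $i$-th handle), and the paper likewise simply asserts it.
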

\begin{proof}
  Consider a reduced disk system $\delta_1, \ldots, \delta_g$ for the
  handlebody $V$. Denote by $S$ the subsurface obtained as the
  complement of the $\delta_i$.  By assumption, $\phi$ preserves all
  $\delta_i$, and thus $S$. Every simple closed curve in $S$ is a
  meridian, and thus $\phi$ induces the identity seen as a mapping class of $\hat{S}$. This
  implies that $\phi$ is a multitwist about the $\delta_i$. Since for
  each $i$ there is a meridian crossing $\delta_i$, it is in fact the
  trivial multitwist.
\end{proof}

Now fix a finite index subgroup $\Gamma < \Tg$ or $\Gamma < \Hg$. In
fact, the only property of $\Gamma$ we use is that for any twist
$T_\alpha$ about some meridian, a suitable power $T_\alpha^n$ is
contained in $\Gamma$.  Furthermore, assume that
$f:\Gamma \to \mathrm{Mod}(\Sigma_g)$ is a given injection. We now
follow the strategy outlined in the introduction to show that $f$ is
in fact a suitable conjugation.

In this setting, Corollary~\ref{cor:nonsep-twists-to-nonsep-twists}
implies that $f(T^n_\alpha)$ is the power of a non-separating twist for
any non-separating meridian $\alpha$ and $n$ big enough.  First, we
note that this conclusion also holds for separating meridians.
\begin{lemma}
  In the setting as above, if $\delta$ is any meridian, there is some $n>0$ so that
  $f(T^n_\delta)$ is the power of some Dehn twist.
\end{lemma}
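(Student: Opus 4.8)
The plan is to mirror the proof of Corollary~\ref{cor:nonsep-twists-to-nonsep-twists}, replacing the connected complement of a non-separating curve by the two sides of $\delta$. Write $S_1, S_2$ for the two complementary components of $\delta$. Since $\delta$ is an essential meridian, neither side can be a disk, so each $S_i$ has positive genus and a single boundary component, namely $\delta$. In particular $\xi(S_i) > 0$ and $\partial S_i$ consists of meridians, so by Lemma~\ref{lem:fi-implies-fullish} the group $\Gamma$ is abundant, and hence full, in $S_1$.

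First I would apply Theorem~\ref{thm:complexity-reduction-kernel} to the subsurface $S = S_1$. After passing to a finite index subgroup $\Gamma' < \mathrm{Stab}_\Gamma(S_1)$ of the stabilizer of $\delta$, this produces a complementary component $Y$ of $C = C(f(\mathrm{Stab}_\Gamma(S_1)))$ together with an injection $f:\Gamma' \to \mathrm{Mod}(Y)$ whose induced map $\hat{f}:\Gamma' \to \mathrm{Mod}(\hat{Y})$ has kernel consisting of multitwists about $\delta$, that is, of powers of $T_\delta$. Choosing $n$ so that $T_\delta^n \in \Gamma'$, the image $f(T_\delta^n)$ lies in $\mathrm{Mod}(Y)$ and dies in $\mathrm{Mod}(\hat{Y})$, so it is a multitwist about $\partial Y \subset C$. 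It is nontrivial since $f$ is injective, which forces $C \neq \emptyset$ and $Y \subsetneq \Sigma$. It then remains only to show that $C$ consists of a single curve.

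The key point is a complexity count exactly parallel to the one in Corollary~\ref{cor:nonsep-twists-to-nonsep-twists}. Choosing pants decompositions of $\hat{S_1}$ and of $\hat{S_2}$ and taking suitable powers of the twists about their curves, I obtain a free abelian subgroup of $\mathrm{Stab}_\Gamma(S_1)$ of rank $\xi(\hat{S_1}) + \xi(\hat{S_2}) = \xi(\Sigma) - 1$ which contains no power of $T_\delta$. Since $\ker(\hat{f})$ consists only of powers of $T_\delta$, this subgroup has trivial intersection with the kernel and therefore injects into $\mathrm{Mod}(\hat{Y})$, so $\xi(\hat{Y}) \geq \xi(\Sigma) - 1$. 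As $Y$ is a proper subsurface of $\Sigma$ obtained by cutting along $C$ and cusping, this is only possible if $Y$ is the complement of a single non-separating curve $c$, and hence $C = \{c\}$. Therefore $f(T_\delta^n)$ is a nontrivial multitwist about the single curve $c$, that is, a nonzero power of $T_c$, which is the desired conclusion.

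I expect the main obstacle to be the complexity step. One must verify both that the twists on the two sides of $\delta$ genuinely assemble into a free abelian group of the full rank $\xi(\Sigma)-1$ which survives $\hat{f}$, and that no proper subsurface other than the complement of a single non-separating curve attains complexity $\xi(\Sigma)-1$ — ruling out, in particular, the two sides of a separating curve and any reduction system with more than one component. Both points run in close analogy to the non-separating case already treated, the only genuinely new input being the bookkeeping for the two components $S_1, S_2$ in place of a single connected complement.
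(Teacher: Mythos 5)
Your proposal is correct in substance but takes a genuinely different route from the paper. The paper does not revisit the Section~\ref{sec:abundant} machinery for separating curves at all: it chooses a reduced disk system disjoint from $\delta$, applies Corollary~\ref{cor:cut-to-cut} to see that the twists about the disk system map to twist powers about a cut system, and thereby induces an injection $\Gamma' \to \Mod(S_{0,2g})$ whose domain contains a power of every Dehn twist; it then quotes Corollary~2 of \cite{Amarayona-Souto-powers} to conclude this map is geometric, so Dehn twists map to Dehn twists. You instead re-run the internal argument of Corollary~\ref{cor:nonsep-twists-to-nonsep-twists} on a side of the separating meridian: Lemma~\ref{lem:fi-implies-fullish} gives abundance in $S_1$ and $S_2$, Theorem~\ref{thm:complexity-reduction-kernel} produces the component $Y$, and the count $(3g_1-2)+(3g_2-2)=3g-4=\xi(\Sigma)-1$ of twistable pants curves on the two sides (meridian pants decompositions, whose twist powers lie in $\Stab_\Gamma(S_1)$ since they fix $\delta$, and which meet $\langle T_\delta\rangle$ trivially) pins $Y$ down to the complement of a single non-separating curve. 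This buys independence from the external Aramayona--Souto input, at the cost of redoing the complexity bookkeeping; the paper's route is shorter and treats separating and non-separating meridians uniformly, whereas your argument covers only separating $\delta$ and silently delegates the non-separating case to Corollary~\ref{cor:nonsep-twists-to-nonsep-twists} (which is fine, but should be said explicitly).

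One step needs repair. You justify ``$f(T_\delta^n)$ dies in $\Mcg(\hat{Y})$'' by reading the kernel clause of Theorem~\ref{thm:complexity-reduction-kernel} as the containment $\{\text{multitwists about } \partial S\} \subseteq \ker \hat{f}$. The proof of that theorem only establishes the opposite containment ($\ker\hat{f}$ consists \emph{only} of such multitwists, via centrality), and that is also the only direction the paper ever uses; nothing a priori prevents $f(T_\delta^n)$ from restricting nontrivially to $\hat{Y}$. The fix is to reorder your argument: the complexity count uses only the safe containment (to see that your free abelian group injects into $\Mcg(\hat{Y})$), so run it first to conclude $C=\{c\}$ with $c$ non-separating, and then finish as the paper does at the end of Corollary~\ref{cor:nonsep-twists-to-nonsep-twists}. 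Namely, $T_\delta$ is central in $\Stab_\Gamma(S_1)$, so $f(T_\delta^n)$ is an infinite-order (by injectivity) central element of a pure group whose canonical reduction system is the single curve $c$; since the image in $\Mcg(\hat{Y})$ contains your rank $3g-4\geq 2$ abelian group, it is not virtually cyclic, and by Proposition~\ref{prop:pa-centralizer} a nontrivial central element cannot survive as a pseudo-Anosov (or any infinite-order class) on $\hat{Y}$. Hence a power of $f(T_\delta^n)$ is a multitwist about $c$, i.e.\ a power of $T_c$, as desired. With this reordering your proof is complete.
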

\begin{proof}
  Let $\delta$ be arbitrary. Choose a reduced disk system $\alpha_1,\ldots,\alpha_g$ disjoint
  from $\delta$. Then, by Corollary~\ref{cor:cut-to-cut}, the twists about $\alpha_i$ map
  to twists about a non-separating multicurve $\beta_1,\ldots, \beta_g$. Thus, $f$ induces
  an injective homomorphism
  \[\hat{f}:\Gamma' \to \mathrm{Mod}(S_{0,2g}) \]
  whose domain $\Gamma'$ has the property that some power of any Dehn twist is contained
  in $\Gamma'$. By Corollary~2 of \cite{Amarayona-Souto-powers} such a map is induced by
  a surface diffeomorphism, and in particular maps Dehn twists to Dehn twists.
\end{proof}

\begin{theorem}\label{thm:rigidity-main}
  Suppose that $\Gamma < \Tg$ or $\Gamma < \Hg$ is any finite index
  subgroup, and let $f:\Gamma \to \Mcg(\Sigma_g)$ be injective. Then $f$ is
  the restriction of a conjugation by an element in the mapping class
  group $\mathrm{Mod}(\Sigma)$.
\end{theorem}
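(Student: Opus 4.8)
The plan is to execute the standard three-step Ivanov strategy, where the first step has essentially been completed by the results immediately preceding and the second step is supplied by Theorem~\ref{thm:diskgraph-rigidity}. For the first step, note that for every meridian $\delta$ we may choose $n$ with $T_\delta^n \in \Gamma$, and by the lemma immediately preceding this theorem $f(T_\delta^n)$ is a power of a Dehn twist about a curve which I will call $\iota(\delta)$. By Lemma~\ref{lem:dehn-equal} the curve $\iota(\delta)$ is independent of the chosen power $n$, so $\iota$ is a well-defined map from the vertices of the disk graph $\mathcal{D}(V)$ to the vertices of $\mathcal{C}(\Sigma)$.

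Next I would verify that $\iota$ is a superinjective simplicial map, so that Theorem~\ref{thm:diskgraph-rigidity} applies. If $\alpha,\beta$ are disjoint meridians, then suitable powers of $T_\alpha,T_\beta$ commute by Lemma~\ref{lem:dehn-commute}, hence so do their $f$-images; thus $T_{\iota(\alpha)}$ and $T_{\iota(\beta)}$ have commuting powers and $\iota(\alpha),\iota(\beta)$ are disjoint, so $\iota$ is simplicial. Conversely, if $\iota(\alpha),\iota(\beta)$ are disjoint then $f(T_\alpha^n)$ and $f(T_\beta^m)$ commute, and injectivity of $f$ combined with Lemma~\ref{lem:dehn-commute} forces $\alpha,\beta$ to be disjoint; this gives superinjectivity. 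Then Theorem~\ref{thm:diskgraph-rigidity} produces a mapping class $\phi$ with $\iota(\delta)=\phi(\delta)$ for every meridian. Replacing $f$ by $x \mapsto \phi^{-1} f(x)\phi$, I may assume $\iota = \id$, i.e. that $f(T_\delta^n)$ is a nontrivial power of $T_\delta$ for every meridian $\delta$.

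For the third step I would show this normalized $f$ is the identity on $\Gamma$. Fix $g \in \Gamma$ and a meridian $\delta$, and choose $n$ with $T_\delta^n \in \Gamma$; then also $T_{g\delta}^n = g T_\delta^n g^{-1} \in \Gamma$, where $g\delta$ is again a meridian since $g \in \Hg$. Writing $f(T_\delta^n)=T_\delta^b$ and $f(T_{g\delta}^n)=T_{g\delta}^c$ with $b,c \neq 0$ and applying $f$ to the relation $g T_\delta^n g^{-1} = T_{g\delta}^n$, I obtain $T_{f(g)(\delta)}^b = T_{g\delta}^c$; Lemma~\ref{lem:dehn-equal} then yields $f(g)(\delta)=g(\delta)$. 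Hence $h := g^{-1}f(g)$ fixes every meridian. In particular $h$ fixes each component of any reduced disk system, so $h$ carries some reduced disk system to a reduced disk system and therefore lies in $\Hg$ by the standard lemma to that effect; Lemma~\ref{lem:meridian-action} then forces $h=\id$, so $f(g)=g$. Undoing the normalization shows the original $f$ equals conjugation by $\phi$.

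The conceptually substantial work has already been done in the earlier sections, namely converting twists to twists (Corollaries~\ref{cor:nonsep-twists-to-nonsep-twists} and~\ref{cor:cut-to-cut}) and the disk-graph rigidity of Theorem~\ref{thm:diskgraph-rigidity}; within the present proof the steps are short. The only delicate bookkeeping is the a priori varying powers: one must ensure $\iota$ is power-independent and that the conjugation relation is applied for a common power lying in $\Gamma$, both of which are dispatched by Lemma~\ref{lem:dehn-equal} and by conjugating the single relation $g T_\delta^n g^{-1}=T_{g\delta}^n$. I expect the final identification to be the place where the handlebody structure, rather than bare curve-graph combinatorics, re-enters, through the membership $h \in \Hg$ and Lemma~\ref{lem:meridian-action}.
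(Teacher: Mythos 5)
Your proposal is correct and takes essentially the same approach as the paper: meridian twist powers map to Dehn twist powers via the preceding lemma, this yields a superinjective simplicial map $\mathcal{D}(V)\to\mathcal{C}(\Sigma)$ which Theorem~\ref{thm:diskgraph-rigidity} converts into a normalizing conjugation, and the conjugation relation together with Lemmas~\ref{lem:dehn-equal} and~\ref{lem:meridian-action} finishes the argument. The only point to state explicitly is that in your third step $n$ must be chosen as a common multiple of exponents for which $f(T_\delta^n)$ and $f(T_{g\delta}^n)$ are already known to be twist powers (the lemma only provides \emph{some} such exponent for each meridian, and a priori other powers could map to roots of multitwists); the paper sidesteps this by raising the conjugation relation to the product of the two exponents, which is the same bookkeeping in different clothing.
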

\begin{proof}
  By the lemma above, for any meridian $\delta$ and $n>0$ big enough,
  $f(T_\delta^n)$ is the power of a Dehn twist about some curve
  $\iota(\delta)$. Furthermore, if $\delta, \delta'$ are disjoint,
  then $T_\delta^n, T_{\delta'}^n$ commute, thus so do the twist
  powers about $\iota(\delta), \iota(\delta')$ -- hence they are
  disjoint (Lemma~\ref{lem:dehn-commute}). In other words, $\iota$
  defines a simplicial map
\[ \iota:\mathcal{D}(V) \to \mathcal{C}(\Sigma).\]
Since $f$ is injective, this map $\iota$ is superinjective: if $\delta,\delta'$ are not disjoint, then
$T_\delta^n, T_{\delta'}^n$ and hence $f(T_\delta^n), f(T_{\delta'}^n)$ do not commute; hence
$\iota(\delta), \iota(\delta')$ are not disjoint.

By Theorem~\ref{thm:diskgraph-rigidity}, $\iota$ is therefore induced by some mapping
class of $\Sigma$. Changing $f$ by a conjugation we may 
  therefore assume that $f(T_\delta^{n(\delta)}) = T_\delta^{m(\delta)}$ for every $\delta$.

  Now, let $\phi \in \Gamma$ be arbitrary. Note that for any meridian $\delta$
  \[ T_{\phi(\delta)}^{m(\phi\delta)} =
  f(T_{\phi(\delta)}^{n(\phi\delta)}) = f(\phi
  T_\delta^{n(\phi\delta)} \phi^{-1}) = f(\phi) f(T_\delta^{n(\phi\delta)}) f(\phi)^{-1}\]
  and therefore
  \[ T_{\phi(\delta)}^{m(\phi\delta)n(\delta)} =
  f(\phi) T_\delta^{m(\delta)n(\phi\delta)} f(\phi)^{-1}
  = T_{f(\phi)(\delta)}^{m(\delta)n(\phi\delta)} \] and therefore
  $f(\phi)(\delta) = \phi(\delta)$ for all meridians $\delta$ (by
  Lemma~\ref{lem:dehn-equal}).  This implies $f(\phi) = \phi$ by
  Lemma~\ref{lem:meridian-action}.
\end{proof}

\begin{corollary}
  The abstract commensurator of $\Hg$ is $\Hg$.  
  The abstract commensurator of $\Tg$ is $\Hg$. 
\end{corollary}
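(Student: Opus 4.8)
The plan is to deduce both statements from Theorem~\ref{thm:rigidity-main} by constructing an explicit injective homomorphism from each abstract commensurator into $\Mcg(\Sigma_g)$ and identifying its image with $\Hg$.

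Recall that an element of the abstract commensurator $\mathrm{Comm}(G)$ is represented by an isomorphism $\varphi\colon \Gamma_1 \to \Gamma_2$ between finite index subgroups $\Gamma_1,\Gamma_2 < G$, two such being identified when they agree on a common finite index subgroup. For $G = \Hg$ or $G = \Tg$, any such $\varphi$ is in particular an injection $\Gamma_1 \to \Mcg(\Sigma_g)$, so Theorem~\ref{thm:rigidity-main} provides a mapping class $h = h_\varphi$ with $\varphi = \mathrm{conj}_{h}|_{\Gamma_1}$. First I would check that $h_\varphi$ is unique: if $\mathrm{conj}_{h}$ restricts to the identity on a finite index subgroup $\Gamma < G$, then $h$ commutes with a power $T_\delta^n \in \Gamma$ for every meridian $\delta$, so $T_{h(\delta)}^n = T_\delta^n$ and hence $h(\delta)=\delta$ for all meridians $\delta$ by Lemma~\ref{lem:dehn-equal}; thus $h$ carries a reduced disk system to itself, so $h \in \Hg$ (by the lemma that a mapping class taking some reduced disk system to a reduced disk system lies in $\Hg$), and then $h = \id$ by Lemma~\ref{lem:meridian-action}. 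Uniqueness makes $[\varphi] \mapsto h_\varphi$ well defined, and it is visibly an injective homomorphism $\mathrm{Comm}(G) \to \Mcg(\Sigma_g)$, since composition of commensurations corresponds to composition of the conjugations. It then remains to identify its image with $\Hg$ in both cases.

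The heart of the argument, and the step I expect to be the main obstacle, is showing the image lies in $\Hg$, i.e. that whenever $h$ conjugates a finite index subgroup $\Gamma_1 < \Hg$ (or $< \Tg$) into the group, then $h \in \Hg$. For each meridian $\delta$ a suitable power $T_\delta^n$ lies in $\Gamma_1$, so $h T_\delta^n h^{-1} = T_{h(\delta)}^n \in \Gamma_2 \subset \Hg$. I would then invoke Theorem~\ref{thm:characterize-multitwists}, applied to the single-curve multitwist $T_{h(\delta)}^n$: since a lone twisting curve cannot be paired with a second curve by an annulus, it must fall into the meridian case, forcing $h(\delta)$ to be a meridian. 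Hence $h$ sends every meridian to a meridian, in particular it takes a reduced disk system to a reduced disk system, and therefore $h \in \Hg$ by the same reduced-disk-system lemma used above.

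Finally I would establish surjectivity onto $\Hg$ and conclude. For $G=\Hg$ this is immediate: every $h \in \Hg$ satisfies $h\Hg h^{-1} = \Hg$, so $\mathrm{conj}_h$ is an automorphism of $\Hg$ realizing $h$ in the image. For $G = \Tg$ I would use that $\Tg$ is normal in $\Hg$ — it is the kernel of $\Hg \to \mathrm{Out}(\pi_1(V))$ by Luft's theorem \cite{luft} — so again every $h \in \Hg$ conjugates $\Tg$ to itself and lies in the image. Combining this with the previous paragraph, the injective homomorphism $\mathrm{Comm}(G) \to \Mcg(\Sigma_g)$ has image precisely $\Hg$ for both $G=\Hg$ and $G=\Tg$, which yields $\mathrm{Comm}(\Hg) = \Hg$ and $\mathrm{Comm}(\Tg) = \Hg$.
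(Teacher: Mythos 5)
Your proposal is correct and takes essentially the same route as the paper: Theorem~\ref{thm:rigidity-main} reduces everything to showing that any mapping class $h$ conjugating a finite index subgroup of $\Hg$ or $\Tg$ into $\Hg$ sends each meridian $\delta$ to a meridian (because $T_{h(\delta)}^n\in\Hg$ forces $h(\delta)$ to be a meridian by the Oertel-based multitwist characterization), hence lies in $\Hg$. The only difference is presentational: you make explicit the well-definedness and injectivity of the map $\mathrm{Comm}(G)\to\Mcg(\Sigma_g)$ (uniqueness of the conjugating class via Lemmas~\ref{lem:dehn-equal} and~\ref{lem:meridian-action}) and the surjectivity onto $\Hg$ via normality of $\Tg$ in $\Hg$, steps the paper leaves implicit.
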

\begin{proof}
  In light of Theorem~\ref{thm:rigidity-main} the only claim to prove is the following:
  suppose that $\Gamma <\Hg$ or $\Tg$ is finite index, and $\phi$ is a mapping
  class such that $\phi\Gamma\phi^{-1} < \Hg$, then $\phi \in \Hg$. To show this,
  let $\delta$ be any meridian, and $n>0$ be such that $T_\delta^n\in\Gamma$. Then
  by assumption $T_{\phi(\delta)}^n = \phi T_\delta^n \phi^{-1} \in \Hg$, and therefore
  $\phi(\delta)$ is a meridian. Hence, $\phi$ is a mapping class which preserves the
  set of meridians for $V$, and therefore contained in $\Hg$.
\end{proof}

\section{Flexibility for the handlebody group}
\label{sec:flexible}
The first goal of this section is to prove the following.
\begin{theorem}\label{thm:flexible}
  There is a finite index subgroup $\Gamma < \Hg$ and an inclusion
  $f:\Gamma \to \Mcg(\Sigma_h)$ whose image is not conjugate into
  $\mathcal{H}_h$.
\end{theorem}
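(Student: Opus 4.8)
The plan is to produce the example from a carefully chosen connected double cover, and to rule out conjugacy into $\mathcal{H}_h$ by combining Oertel's criterion (Corollary~\ref{cor:no-left-multitwists}) with the wave lemma (Lemma~\ref{lem:wave}).

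First I would fix the cover. Assume $g \geq 3$ and let $L < H_1(\Sigma_g;\mathbb{Z}/2)$ be the subspace spanned by the classes of meridians of $V$, that is $L = \ker\big(H_1(\Sigma_g;\mathbb{Z}/2) \to H_1(V;\mathbb{Z}/2)\big)$. Choose $\phi \in H^1(\Sigma_g;\mathbb{Z}/2)$ with $\phi|_L \neq 0$, so $\phi$ pairs nontrivially with some meridian, and let $p:\Sigma_h \to \Sigma_g$ (with $h = 2g-1$) be the associated connected double cover, with deck involution $\tau$. The stabilizer of $\phi$ in $\Hg$ is finite index and consists of liftable mapping classes. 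To upgrade lifting to an honest homomorphism I would pass to a finite index subgroup over which the extension $1 \to \langle\tau\rangle \to \mathrm{LMod} \to L_{\mathrm{lift}} \to 1$ (of the liftable group $L_{\mathrm{lift}} < \Mcg(\Sigma_g)$ by the deck group) splits: since $\mathrm{LMod} < \Mcg(\Sigma_h)$ is residually finite and the deck kernel is finite, a finite index subgroup $U < \mathrm{LMod}$ meets $\langle\tau\rangle$ trivially and thus maps isomorphically onto a finite index subgroup $\Gamma_0 < L_{\mathrm{lift}}$ (this is the construction of \cite{ALS}). Intersecting with $\Hg$ gives a finite index $\Gamma = \Gamma_0 \cap \Hg < \Hg$ together with an injection $f:\Gamma \to \Mcg(\Sigma_h)$ sending each element to a distinguished lift. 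I record two features: $\Gamma$ contains a power of $T_\delta$ for every meridian $\delta$, and the distinguished lift of such a positive twist has a power equal to the positive multitwist $M_\delta = \prod_c T_c$ about the components $c$ of $p^{-1}(\delta)$ (any lift differs from a power of $M_\delta$ by a deck transformation, which commutes with $M_\delta$ because $\tau$ preserves the symmetric multicurve $p^{-1}(\delta)$ and fixes all twisting exponents).

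Next I would reduce non-conjugacy to a statement about preimages of meridians. Suppose toward a contradiction that $f(\Gamma)$ is conjugate into $\mathcal{H}_h$; conjugating the handlebody, assume $f(\Gamma)$ lies in the handlebody group of some $V'$ with $\partial V' = \Sigma_h$. For every meridian $\delta$ of $V$, a suitable power of $f(T_\delta^n)$ equals $\prod_c T_c^{N}$, a product of \emph{left} Dehn twists about the components $c$ of $p^{-1}(\delta)$, and this power lies in the handlebody group of $V'$. Corollary~\ref{cor:no-left-multitwists} then forces \emph{every} component of $p^{-1}(\delta)$ to be a meridian of $V'$. Since $\Gamma$ contains a power of every meridian twist, this holds for \emph{all} meridians $\delta$ of $V$ simultaneously.

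The heart of the argument, and the step I expect to be the main obstacle, is to show that this preimage condition forces $p$ to extend to a covering of handlebodies $V' \to V$; this is exactly one direction of Theorem~\ref{thm:characterize-covers}. Group-theoretically the goal is $p_*\pi_1(\Sigma_h) \supseteq \ker(\pi_1(\Sigma_g) \to \pi_1(V))$, i.e.\ that every meridian of $V$ lifts to a union of loops, which identifies $V'$ with the handlebody cover determined by a finite index subgroup of $\pi_1(V)$. I would stress that a soft homological count does \emph{not} suffice: the classes of the meridian preimages all lie in the Lagrangian $\ker(H_1(\Sigma_h;\mathbb{Q}) \to H_1(V';\mathbb{Q}))$, and the dimension count is inconclusive because its intersections with the $\tau$–invariant and $\tau$–anti-invariant summands are isotropic of ranks at most $g$ and $g-1$, which already sum to $h$. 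The genuine input must be geometric: I would apply Lemma~\ref{lem:wave} to pairs of preimage multicurves $p^{-1}(\alpha), p^{-1}(\beta)$, now meridian multicurves of the single handlebody $V'$, and analyze how a wave upstairs projects under $p$ to an arc pattern between $\alpha$ and $\beta$ downstairs. Controlling these projected intersection patterns, and iterating via Lemma~\ref{cut-system-graph-connected} to move between reduced disk systems, should rule out a meridian with connected (degree two) preimage, forcing each meridian to lift to loops and hence $p$ to extend. This combinatorial bookkeeping, valid for $g \geq 3$, is where the real work lies. Finally I would close the loop: the double cover $p$ extends to a covering of handlebodies if and only if $\phi$ factors through $H_1(V;\mathbb{Z}/2)$, i.e.\ if and only if $\phi|_L = 0$. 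Since we chose $\phi|_L \neq 0$, the cover does not extend, contradicting the previous step, so $f(\Gamma)$ is not conjugate into $\mathcal{H}_h$ and Theorem~\ref{thm:flexible} follows.
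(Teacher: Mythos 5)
Your setup is fine, and in places more careful than the paper itself: the residual-finiteness construction of the lifting injection $f$, the observation that a power of each lifted twist is a \emph{positive} multitwist about the components of $p^{-1}(\delta)$, and the reduction via Corollary~\ref{cor:no-left-multitwists} to the statement that every elevation of every meridian of $V$ must be a meridian of $V'$ all check out (the last step is exactly the paper's first proposition after Theorem~\ref{thm:covers-main}). The genuine gap is the step you yourself flag as ``where the real work lies'': you never prove that this elevation condition forces $p$ to extend to a cover of handlebodies. That implication is not routine bookkeeping with waves; it is precisely the hard direction of Theorem~\ref{thm:covers-main}, whose proof occupies the remainder of Section~\ref{sec:flexible}: Lemma~\ref{lem:recur-contradiction}, Corollary~\ref{cor:one-is-enough}, the reduction to an intermediate cyclic cover and a dual curve $\alpha$, and the homological induction of Lemmas~\ref{lem:sign-swaps} and~\ref{lem:induct-intersect}. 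Worse, your $\mathbb{Z}/2$-cover lands in the hardest branch of that machinery: since every element of $\mathbb{Z}/2$ is a power of the generator, the hypothesis of Lemma~\ref{lem:recur-contradiction} is never satisfied for a non-lifting meridian, so one is forced into the assumption \textbf{AR} and the full intersection-number induction. Writing that the wave analysis ``should rule out'' a connected preimage is a statement of the theorem to be proved, not a proof of it; as written, your argument reduces Theorem~\ref{thm:flexible} to a harder unproven result.

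The paper's own proof is engineered to avoid all of this. It takes a degree-$3$ cover and two explicit meridians $\delta_1,\delta_2$ (with $i(\delta_1,\delta_2)=2$ and controlled position relative to the defining curve) so that each elevation of $\delta_1$ meets some elevation of $\delta_2$ in exactly \emph{one} point. Since two meridians of any handlebody intersect an even number of times (their classes span an isotropic subspace for the mod-$2$ intersection form; alternatively apply Lemma~\ref{lem:wave}), the elevations cannot all be meridians of $V'$, and Theorem~\ref{thm:characterize-multitwists} together with Corollary~\ref{cor:no-left-multitwists} yields an immediate contradiction -- no covering theorem needed. If you want to keep your double cover, the same trick is available to you: choose meridians $\delta_1,\delta_2$ intersecting in two points such that the loop formed by one arc of $\delta_2$ and one arc of $\delta_1$ pairs nontrivially with $\phi$; then each elevation of $\delta_1$ meets each elevation of $\delta_2$ in exactly one point, and the parity contradiction above finishes the proof. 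Without such an explicit pair of curves (or a full proof of Theorem~\ref{thm:covers-main}), the proposal is incomplete.
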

The construction is very explicit and uses finite covers. The strategy is 
to consider a cover in which meridians lift to curves whose intersection 
pattern is incompatible with being meridians (or even boundaries of annuli). For an example in
genus $2$, consider Figure~\ref{fig:3cover}. While Theorem~\ref{thm:flexible}
formally can be concluded quickly from Theorem~\ref{thm:covers-main}, it is
instructive to consider the (simpler) setting of Theorem~\ref{thm:flexible} 
first, to understand the argument involved.
\begin{figure}
  \centering
  \includegraphics[width=0.6\textwidth]{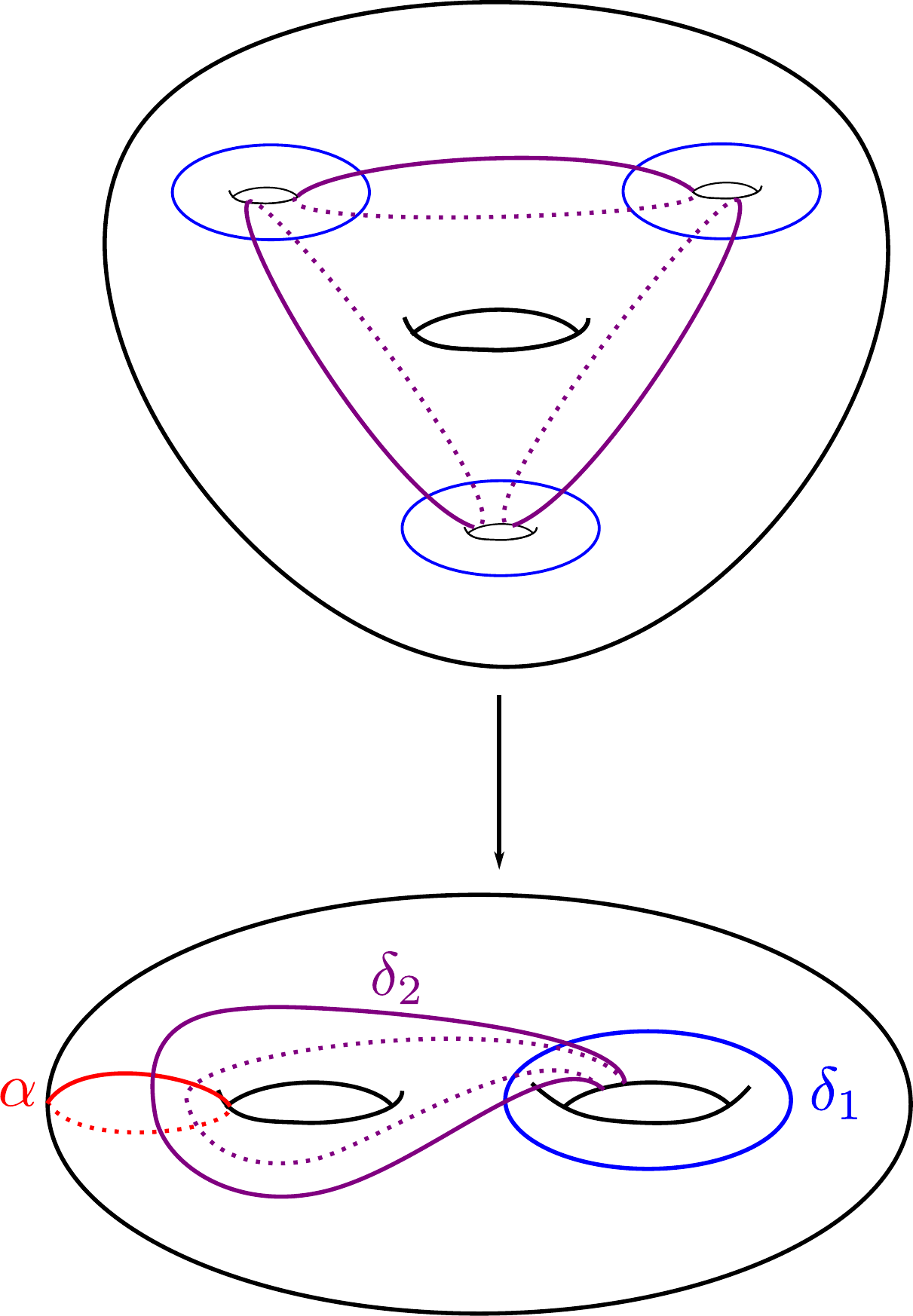}
  \caption{A 3-fold cover inducing odd intersections of meridians. The handlebody for the
    bottom surface is the ``outside'' handlebody of the standard Heegaard splitting of $S^3$: 
    curves are meridians if they can be contracted in the non-compact region of the page
    bounded by the surface.
  }
  \label{fig:3cover}
\end{figure}
\begin{proof}
  Let $\delta_0$ be a meridian, and $\alpha$ a curve intersecting
  $\delta$ once.  The map $\pi_1(\Sigma) \to \ZZ/3\ZZ$ defined by
  algebraic intersection number (mod $3$) with $\alpha$ defines a
  cover $\Sigma' \to \Sigma$ of degree $3$.

  Let $\delta_1$ be a meridian disjoint from $\delta, \alpha$, and let
  $\delta_2$ be a meridian which intersects $\delta_1$ in two points and
  $\alpha$ in two points, with algebraic intersection number $0$.

  Hence, the Dehn twists $T_{\delta_1}, T_{\delta_2}$ are in $\Hg$ and lift
  to $\Sigma'$. The lift of $T_{\delta_i}$ is the product of (left) Dehn twists
  about the three lifts $\delta_i^{(j)}, j=1,2,3$ of $\delta_i$ to $\Sigma'$.

  By construction, each $\delta_1^{(j)}$ intersects exactly two $\delta_2^{(k)}$;
  each in one point.

  Suppose that both the lifts $\widetilde{T_{\delta_1}}$ and
  $\widetilde{T_{\delta_2}}$ would be conjugate into the same
  handlebody group.  By 
  Theorem~\ref{thm:characterize-multitwists}, the multitwists
  $T_{\delta_i^{(1)}}T_{\delta_i^{(2)}}T_{\delta_i^{(3)}}$ are then
  products of twists about meridians and twists about annuli. 

  Since $3$ is odd and twist curves for annulus twists come in pairs,
  at least one of each of the curves involved is a meridian.
  On the other hand, as every $\delta_1^{(j)}$ intersects some $\delta_2^{(k)}$
  in one point, it is impossible that all three curves are meridians.

  Hence, we may assume that we have the following situation:
  \begin{itemize}
  \item $\delta_1^{(1)}$ is a meridian. 
  \item $\delta_1^{(1)}$ intersects $\delta_2^{(1)}$ $\delta_2^{(2)}$,
    and the latter two are connected by an annulus in the handlebody.
  \item $\delta_2^{(3)}$ is a meridian. 
  \end{itemize}
  However, in such a situation the product of left Dehn twists about 
  $\delta_2^{(1)}$ $\delta_2^{(2)}$ is not in the handlebody group, leading
  to a contradiction. 

  Thus, it is impossible that $\widetilde{T_{\delta_1}}$ and
  $\widetilde{T_{\delta_2}}$ are conjugate into the same handlebody subgroup
  of $\mathrm{Mcg}(\Sigma_h)$. Taking $\Gamma$ to be a subgroup which lifts to 
  $\Sigma'$ yields the desired inclusion.
\end{proof}
Denote by $\mathcal{C}^*(\Sigma_h)$ the \emph{multicurve graph of $\Sigma_h$},
i.e. the graph whose vertices correspond to (isotopy classes of) multicurves, and edges
correspond to disjointness. We warn the reader that this graph is different from other multicurve graphs considered
in the literature. Namely, we allow the number of elements in the multicurves to vary, and
more importantly, adjacency does not correspond to basic moves (e.g. exchange one 
curve). The graph $\mathcal{C}_h^*$ is however the natural object when considering
covering constructions. While there are strong restrictions for simplicial injections between
$n$--multicurve graphs (see e.g. \cite{FH}), any covering induces interesting simplicial injections
between multicurve graphs in our sense.
The construction employed in the proof of the previous result immediately implies
the following result on flexible inclusions of disk graphs.
\begin{corollary}
  There is a map $\mathcal{D}(V_g) \to \mathcal{C}^*(\Sigma_h)$ such that the images
  is not conjugate into any sub-graph where all vertices correspond to multi-meridians.
  The same remains true even if we allow that vertices map to 
  multicurves which are boundaries of annuli in the handlebody.
\end{corollary}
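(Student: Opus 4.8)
The plan is to realize $\iota$ as the lift map of the very same degree-three cover $\pi\colon \Sigma_h \to \Sigma_g$ built in the proof of Theorem~\ref{thm:flexible}. First I would define $\iota\colon \mathcal{D}(V_g) \to \mathcal{C}^*(\Sigma_h)$ by sending a meridian $\delta$ to its full preimage $\pi^{-1}(\delta)$, which is a multicurve on $\Sigma_h$ and hence a vertex of $\mathcal{C}^*(\Sigma_h)$. Since preimages of disjoint curves are disjoint, $\iota$ is simplicial, so it is a well-defined inclusion of disk graphs; it then remains to show the image cannot be moved into the relevant subgraph. Here I would unwind ``conjugate into'': a mapping class $\psi$ carries the image into the multi-meridian (resp.\ annulus-boundary) subgraph of a handlebody $W$ precisely when $\pi^{-1}(\delta)$ is a multi-meridian (resp.\ a union of meridians and annulus-paired curves) for the conjugate handlebody $W' = \psi^{-1}(W)$, for \emph{every} meridian $\delta$. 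Thus it suffices to exhibit two meridians whose images cannot simultaneously have this property for any single $W'$, and I would take $\delta_1,\delta_2$ exactly as in Theorem~\ref{thm:flexible}, so that their lifts form a bipartite hexagon: each of the three lifts $\delta_1^{(i)}$ meets exactly two of the three lifts $\delta_2^{(j)}$, each in one point.

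The multi-meridian case is immediate. If all six lifts were meridians of $W'$, any edge of the hexagon would display two meridians meeting in a single point; but meridians bound disks in $W'$, hence span the isotropic (Lagrangian) kernel of $H_1(\Sigma_h)\to H_1(W')$ and so have algebraic intersection number $0$, a contradiction. The annulus-boundary case is the substantive one and I expect it to be the main obstacle, since it is genuinely \emph{not} ruled out by intersection numbers alone. A parity argument (the number $3$ is odd, while annulus pairings come in twos) forces at least one meridian among each triple $\{\delta_1^{(i)}\}$, $\{\delta_2^{(j)}\}$. If a meridian-lift of $\delta_1$ is adjacent in the hexagon to a meridian-lift of $\delta_2$, the isotropic argument again applies. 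Otherwise the combinatorics is rigid: each triple contains exactly one meridian, these two meridians form the unique non-adjacent pair, and the remaining two curves in each triple are annulus-paired with one another in $W'$.

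To dispose of this last configuration I would marry two inputs, and this is the crux. First, the $\mathbb{Z}/3$ deck transformation is an orientation-preserving homeomorphism cyclically permuting the lifts, so after a compatible labelling the algebraic intersection number $\hat{\imath}(\delta_1^{(i)},\delta_2^{(j)})$ depends only on $j-i \bmod 3$: it equals some $P\in\{\pm 1\}$ on one family of edges, some $Q\in\{\pm 1\}$ on the other, and $0$ on the non-edges. Because $\delta_1,\delta_2$ are meridians of $V_g$, they are isotropic downstairs, so $\hat{\imath}(\delta_1,\delta_2)=0$; comparing with the total lifted intersection $3(P+Q)=3\cdot\hat{\imath}(\delta_1,\delta_2)$ gives $Q=-P$. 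Second, a meridian of $W'$ lies in the kernel of $H_1(\Sigma_h)\to H_1(W')$, so its algebraic intersection with any curve factors through the class of that curve in $H_1(W')$; since the two annulus-paired lifts of $\delta_2$ are homologous in $W'$, the meridian-lift of $\delta_1$ meets them with equal sign, forcing $P=Q$. Together $P=Q=-P$ yields $P=0$, contradicting $P=\pm 1$.

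This contradiction eliminates the annulus-boundary configuration and hence proves both assertions of the corollary. The hardest point is exactly this final step: unlike the multi-meridian case, the annulus-boundary case survives every purely homological bound coming from the base, and it is dispatched only by combining the deck symmetry of the upstairs intersection pattern with the $H_1(W')$-factorization of the intersection pairing against meridians supplied by Theorem~\ref{thm:characterize-multitwists}. Everything else---well-definedness and simpliciality of $\iota$, the reduction of ``conjugate into'' to a statement about $W'$, and the multi-meridian case---is routine.
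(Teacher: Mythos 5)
Your construction and the first two thirds of your argument are exactly the paper's: the map is $\delta \mapsto \pi^{-1}(\delta)$ for the degree-three cover of Theorem~\ref{thm:flexible}, the multi-meridian case dies because adjacent lifts cross once while two meridians always have even geometric intersection number, and parity plus that observation forces the antipodal configuration (one meridian per triple, the remaining two curves of each triple annulus-paired). The gap is in your final homological step. When you assert that ``the two annulus-paired lifts of $\delta_2$ are homologous in $W'$, so the meridian-lift of $\delta_1$ meets them with equal sign, forcing $P=Q$,'' you fix a sign that is not yours to fix. An embedded annulus $A \subset W'$ with $\partial A = \delta_2^{(1)} \cup \delta_2^{(3)}$ only gives $[\delta_2^{(1)}] + [\delta_2^{(3)}] = 0$ in $H_1(W')$ \emph{for the boundary orientations of $A$}; relative to the deck-compatible orientations you used to define $P$ and $Q$, the relation is $[\delta_2^{(1)}] = \pm[\delta_2^{(3)}]$ in $H_1(W')$ with an uncontrolled sign. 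If that sign is $-1$, pairing against $[\delta_1^{(1)}] \in \ker\bigl(H_1(\Sigma_h)\to H_1(W')\bigr)$ yields $P+Q=0$, which is precisely the relation $Q=-P$ you already derived from the deck symmetry, and no contradiction appears. Your computation only shows that any such annulus must induce the relation with the minus sign; it does not exclude the configuration.

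This is not a repairable slip within your framework: the forced configuration is homologically consistent, so no argument using only $H_1$ and algebraic intersection numbers can close the case. Indeed the span of $[\delta_1^{(1)}]$, $[\delta_2^{(2)}]$, $[\delta_2^{(1)}]+[\delta_2^{(3)}]$, $[\delta_1^{(2)}]+[\delta_1^{(3)}]$ is isotropic (check it against your own values $P$, $Q=-P$), and the abstract pattern of six curves with exactly these meridian/annulus assignments is realizable: take $W'=S\times[0,1]$ with $S$ a once-holed torus containing curves $\gamma,\eta$ meeting once, set $c_i=\gamma\times\{i\}$ and $d_i=\eta\times\{i\}$ (annulus pairs via $\gamma\times[0,1]$ and $\eta\times[0,1]$), and take meridians $\partial(\alpha\times[0,1])$, $\partial(\beta\times[0,1])$ where $\alpha,\beta\subset S$ are the disjoint co-core arcs of the two handles, so that $\hat{\imath}(\alpha,\gamma)=\hat{\imath}(\beta,\eta)=1$ and $\alpha\cap\eta=\beta\cap\gamma=\emptyset$. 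What kills this configuration in the paper is not homology but handedness: in the proof of Theorem~\ref{thm:flexible} the conjugated lift of $T_{\delta_2}^n$ is a product of \emph{left} twists about the three lifts and lies in the handlebody group, and by Corollary~\ref{cor:no-left-multitwists} (i.e.\ condition (iii) of Theorem~\ref{thm:characterize-multitwists}) an all-left multitwist whose support contains an annulus pair is never in a handlebody group. That twist-orientation datum is exactly what the passage from the group injection to the graph map forgets, and your homological substitute does not recover it; to prove the corollary you must either import the twist argument in some form or find an input genuinely beyond intersection numbers.
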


\medskip
For covering constructions one can analyze the situation completely. The goal is the
following theorem, whose proof will occupy the rest of this section.
\begin{theorem}\label{thm:covers-main}
  Suppose that $\Sigma' \to \Sigma$ is a finite normal cover, where
  $\Sigma$ is closed of genus $g\geq 3$. Let $\Gamma < \mathcal{H}_g$
  be a finite index subgroup of mapping classes which lift to
  $\Sigma'$. Denote by $\Gamma'$ a finite index subgroup of the lifts
  of elements in $\Gamma$.

  Then $\Gamma'$ is conjugate into a handlebody group of $\Sigma'$ if and only
  if $\Sigma' \to \Sigma$ can be extended to a cover of the handlebody $V$
  corresponding to $\Hg$.
\end{theorem}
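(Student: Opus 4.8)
The plan is to prove the two implications separately, after first reformulating the extension condition group-theoretically. The inclusion $\Sigma = \partial V \hookrightarrow V$ induces a surjection $q : \pi_1(\Sigma) \to \pi_1(V)$ onto a free group, whose kernel $K$ is the normal closure of the meridians; write $H = \pi_1(\Sigma') \triangleleft \pi_1(\Sigma)$ for the normal subgroup defining the cover. I would first record that $\Sigma' \to \Sigma$ extends to a cover of handlebodies if and only if $K \subseteq H$, i.e. if and only if every meridian of $V$ already lifts to honest (degree one) loops in $\Sigma'$. Indeed, if an extension $V' \to V$ exists, then a disk bounding a meridian is simply connected and so lifts homeomorphically, forcing degree-one boundary lifts; conversely, if $K \subseteq H$ then $H = q^{-1}(q(H))$, and the cover of $V$ determined by the normal subgroup $q(H) \triangleleft \pi_1(V)$ is a handlebody whose induced boundary cover is exactly $\Sigma' \to \Sigma$.

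\emph{Backward direction (extension implies conjugate into a handlebody group).} Assume the cover extends to handlebodies $V' \to V$. I would prove the stronger statement that the entire group of lifts of elements of $\Gamma$ lies in $\mathcal{H}(V')$, which gives the claim for the finite index subgroup $\Gamma'$. The key compatibility is $\Phi_* \circ q = q \circ \phi_*$ for any homeomorphism $\Phi$ of $V$ restricting to $\phi$ on $\Sigma$. If $\phi \in \Gamma$ lifts to $\Sigma'$, then $\phi_*$ preserves $H$, hence $\Phi_*$ preserves $q(H) = \pi_1(V')$, so $\Phi$ lifts to a homeomorphism of $V'$; its boundary restriction agrees with the given lift of $\phi$ up to a deck transformation, and since $K \subseteq H$ the deck groups of $\Sigma' \to \Sigma$ and $V' \to V$ coincide, so every deck transformation of $\Sigma'$ extends over $V'$ as well. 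Thus every lift of every $\phi \in \Gamma$ extends over $V'$, and in particular $\Gamma' \subseteq \mathcal{H}(V')$.

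\emph{Forward direction, reduction step.} Assume $\Gamma'$ is conjugate into a handlebody group, say $\psi \Gamma' \psi^{-1} \subseteq \mathcal{H}(V')$, and set $V'' = \psi^{-1} V'$, so that $\Gamma' \subseteq \mathcal{H}(V'')$. Since $\Gamma$ is finite index in $\mathcal{H}_g$, it contains a nonzero power $T_\delta^n$ of the twist about every meridian $\delta$ of $V$. Because the covering is orientation preserving, the lift of a power of a \emph{left} Dehn twist is a product of \emph{left} Dehn twists about the components of the preimage of $\delta$; a further power lands in $\Gamma' \subseteq \mathcal{H}(V'')$ and is still a product of left twists. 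By Corollary~\ref{cor:no-left-multitwists}, this forces each component of the preimage of each meridian of $V$ to be a meridian of the single handlebody $V''$. Call this property $(\ast)$.

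\emph{Forward direction, the obstruction and main obstacle.} It remains to show that $(\ast)$ forces $K \subseteq H$. The meridians of $V''$ span a Lagrangian, in particular isotropic, subspace of $H_1(\Sigma';\Q)$, so $(\ast)$ implies that any two components of preimages of meridians of $V$ have algebraic intersection number zero. The heart of the argument is the converse: if the cover does \emph{not} extend, this isotropy must fail. When $K \not\subseteq H$ there is a meridian $\delta_0$ whose class is nontrivial in the deck group $G = \pi_1(\Sigma)/H$; generalizing the genus two computation in the proof of Theorem~\ref{thm:flexible}, I would produce a second meridian $\eta$ of $V$ so that two components of the preimages of $\delta_0$ and $\eta$ intersect with nonzero (indeed odd) algebraic number, contradicting $(\ast)$. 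This intersection-pattern analysis --- tracking how the winding of $\delta_0$ in $G$ distributes the geometric intersections of $\delta_0$ and $\eta$ among their individual lifts, and invoking the wave lemma (Lemma~\ref{lem:wave}) to locate a subarc realizing an unbalanced intersection --- is the main obstacle, and is precisely where the hypothesis $g \geq 3$ enters, to guarantee enough room to choose $\eta$ with the required homological behaviour.
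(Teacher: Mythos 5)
Your easy direction and the first reduction of the hard direction are correct and agree with the paper: the backward implication is proved exactly as in the paper (lifting criteria for homeomorphisms of $V$, with your added care about deck transformations being a nice touch), and the use of Corollary~\ref{cor:no-left-multitwists} to extract property $(\ast)$ --- every elevation of every meridian of $V$ is a meridian of a single handlebody $V''$ --- is precisely the paper's first step (together with the elementary observation that extension of the cover is equivalent to degree-one lifting of all meridians). The problem is what comes after: the implication ``$(\ast)$ forces the cover to extend'' is the entire content of the paper's Section~\ref{sec:flexible} beyond these reductions, and your proposal does not prove it; you explicitly defer it as ``the main obstacle.'' So the proposal has a genuine gap, and it sits exactly at the heart of the theorem.

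Moreover, the mechanism you propose for closing that gap is likely too coarse. You want to show that non-extension produces two elevations with nonzero \emph{algebraic} intersection number, contradicting isotropy of the Lagrangian spanned by meridians of $V''$. The paper's obstruction is finer: it is the wave criterion (Lemma~\ref{lem:wave}), which constrains the \emph{pattern} of geometric intersections of a meridian with the components of a preimage multicurve, and can obstruct $(\ast)$ even when all algebraic intersection numbers vanish; this is how Lemma~\ref{lem:recur-contradiction} works (and where $g \geq 3$ actually enters, to find the auxiliary meridian $\alpha'$). More seriously, there is an entire case in which no intersection-type obstruction is available at all: when every meridian-to-meridian recurrence maps into a cyclic subgroup of the deck group (assumption \textbf{AR} in the paper), the image of $K=\ker(\pi_1(\Sigma)\to\pi_1(V))$ in the deck group is cyclic, and the paper must then argue positively rather than by contradiction --- passing to the intermediate cover $\Sigma'/\langle m\rangle$, proving the sign-flip Lemma~\ref{lem:sign-swaps}, and running the homological induction of Lemma~\ref{lem:induct-intersect} on disk-system exchange moves to exhibit a meridian lifting with degree one, which is then bootstrapped to all meridians by Corollary~\ref{cor:one-is-enough}. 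Your single-obstruction strategy has no counterpart to this cyclic case, and it is not established (in your proposal or in the paper) that non-extension always forces nonzero algebraic intersection between elevations; the paper's structure strongly suggests it does not.
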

One direction is easy: suppose $V' \to V$ is a cover of handlebodies, and 
$\partial V' = \Sigma' \to \Sigma = \partial V$ its boundary cover. If $F: V \to V$ is
a homeomorphism whose restriction $\phi$ to the boundary lifts to $\Sigma'$, then $F$ lifts to 
a homeomorphism of $V'$. Hence, any group $\Gamma'$ as in the statement is conjugate into
the handlebody group defined by $V'$.

The other direction is more involved. We begin with the following, which is a restatement of the final 
argument employed in proof of Theorem~\ref{thm:flexible}.
\begin{proposition}
  Suppose that $\Sigma' \to \Sigma$ is a finite cover. Let $\Gamma < \mathcal{H}_g$
  be a finite index subgroup of mapping classes which lift to $\Sigma'$. Denote by
  $\Gamma'$ a finite index subgroup of the lifts of elements in $\Gamma$. 

  If $\Gamma'$ is conjugate into a handlebody group of $\Sigma'$ then
  $\Sigma'$ can be identified with the boundary of a handlebody $V'$ so that
  for every meridian $\delta$, each component of the preimage of $\delta$ in
  $\Sigma'$ is a meridian for $V'$.
\end{proposition}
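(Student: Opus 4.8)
The plan is to reduce the statement, via the freedom in the choice of $V'$, to a direct application of Oertel's criterion to the lift of a single twist. First I would use that ``conjugate into a handlebody group'' is the same as ``lies in a handlebody group for a suitable identification of $\Sigma'$ with the boundary of a handlebody'': if $h\Gamma' h^{-1} < \mathcal{H}(V'_0)$ for some mapping class $h$ and some handlebody $V'_0$, then re-choosing the homeomorphism $\partial V' \cong \Sigma'$ (equivalently, carrying $V'_0$ along by $h^{-1}$) produces a handlebody $V'$ with $\Gamma' < \mathcal{H}(V')$; this is exactly the sense in which being conjugate into $\mathcal{H}$ is identification-independent. Thus I may assume outright that $\Gamma' < \mathcal{H}(V')$, and the goal becomes: for every meridian $\delta$ of $V$, each component of the preimage of $\delta$ in $\Sigma'$ is a meridian of $V'$.

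Fix a meridian $\delta$ for $V$ and write $\delta^{(1)},\dots,\delta^{(k)}$ for the components of its preimage, where $\delta^{(j)}$ covers $\delta$ with some degree $d_j$. The key input is the standard lifting behaviour of Dehn twists in finite covers: since $\delta$ is a meridian, $T_\delta \in \mathcal{H}_g$, so for a suitable $n>0$ we have $T_\delta^n \in \Gamma$; being an element of $\Gamma$ it lifts to $\Sigma'$, and (the covering being orientation preserving) its lift is the positive multitwist
\[ \widetilde{T_\delta^{\,n}} \;=\; \prod_{j=1}^{k} T_{\delta^{(j)}}^{\,n/d_j}, \]
a product of \emph{left} Dehn twists about the disjoint curves $\delta^{(j)}$ with strictly positive exponents. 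This places $\widetilde{T_\delta^{\,n}}$ in the group of lifts of $\Gamma$; since $\Gamma'$ has finite index there, a further positive power $\big(\widetilde{T_\delta^{\,n}}\big)^M = \prod_j T_{\delta^{(j)}}^{\,Mn/d_j}$ lies in $\Gamma' < \mathcal{H}(V')$.

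It remains to read off the conclusion from Oertel's criterion. The element $\prod_j T_{\delta^{(j)}}^{\,Mn/d_j}$ is a product of left Dehn twists about disjoint curves lying in $\mathcal{H}(V')$. In the trichotomy of Theorem~\ref{thm:characterize-multitwists}, the annular pairs are governed by condition (iii), which requires the two paired twists to have \emph{opposite} handedness; since every twist here is a positive left twist, no such pair can occur, so there is no room for any annular contribution and every curve must be of meridian type. This is exactly Corollary~\ref{cor:no-left-multitwists}, whose handedness argument is insensitive to the size of the positive exponents, and it forces each $\delta^{(j)}$ to be a meridian for $V'$. As $\delta$ was an arbitrary meridian of $V$, this proves the proposition.

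The one step requiring genuine care -- and the conceptual point distinguishing this argument from the general multitwist analysis used in Theorem~\ref{thm:flexible} -- is precisely the positivity of the lifted twists: it is this that removes the need for any parity or annulus-counting argument and lets Corollary~\ref{cor:no-left-multitwists} apply verbatim. The only technical book-keeping is the lifting lemma (existence of a liftable power and the explicit product form of the lift), which is standard; everything else is formal.
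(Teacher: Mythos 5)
Your proposal is correct and follows essentially the same route as the paper: lift a suitable power $T_\delta^n$ of the twist about a meridian, observe that the lift is a product of \emph{left} Dehn twists about the elevations of $\delta$, and conclude from Corollary~\ref{cor:no-left-multitwists} (the handedness consequence of Oertel's criterion) that all elevations must be meridians for $V'$. Your write-up is somewhat more careful than the paper's on the book-keeping points (the exponents $n/d_j$, passing to a further power to land in the finite index subgroup $\Gamma'$, and the initial reduction replacing conjugation by a re-identification of $\partial V'$ with $\Sigma'$), but these are refinements of the same argument rather than a different one.
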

\begin{proof}
  Consider the left Dehn twist $T_\delta$ about any meridian, and
  consider a lift of $T^n_\delta$, where $n$ is big enough to ensure
  that $\delta^n$ lifts to a closed curve. The lift of $T^n_\delta$ is then product of left Dehn
  twists about the preimages $\delta_1, \ldots, \delta_k$ of $\delta^n$. By
  Corollary~\ref{cor:no-left-multitwists} this element is contained in
  the handlebody group of $\Sigma'$ if and only if all $\delta_i$ are meridians.
\end{proof}
To use this, we note the following standard lemma.
\begin{lemma}
  Let $\Sigma'\to\Sigma$ be a finite cover, and suppose that $\Sigma = \partial V$. Then
  $\Sigma'\to\Sigma$ extends to a cover of handlebodies (with base $V$) if and only if
  every meridian for $V$ lifts to $\Sigma'$ with degree $1$.
\end{lemma}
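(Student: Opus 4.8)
The plan is to rephrase everything in terms of fundamental groups. Let $q\colon \pi_1(\Sigma)\to\pi_1(V)$ be the map induced by the inclusion $\Sigma=\partial V\hookrightarrow V$. Since $V$ is a genus $g$ handlebody, $\pi_1(V)$ is a free group $F_g$, the map $q$ is surjective, and $N:=\ker q$ is the normal closure of the meridians; in fact the $g$ boundary curves $m_1,\dots,m_g$ of a reduced disk system already normally generate $N$ (cutting $V$ along the corresponding disks yields a ball, so killing the $m_i$ turns $\pi_1(\Sigma)$ into $F_g$). The cover $\Sigma'\to\Sigma$ corresponds to a finite-index subgroup $H<\pi_1(\Sigma)$.

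First I would pin down the algebraic content of the left-hand side. A connected finite cover $V'\to V$ corresponds to a finite-index subgroup $K<\pi_1(V)=F_g$; since $q$ is surjective there is a single double coset $K\backslash F_g/q(\pi_1(\Sigma))$, so $\partial V'$ is connected, and the boundary cover $\partial V'\to\Sigma$ is connected of degree $[F_g:K]$ and corresponds to $q^{-1}(K)$ (compare degrees). Because $K$ is a finite-index subgroup of a free group it is itself free, and a compact orientable $3$-manifold covering a handlebody is again a handlebody, so $V'$ is automatically a handlebody. Hence $\Sigma'\to\Sigma$ extends to a cover with base $V$ precisely when $H=q^{-1}(K)$ for some finite-index $K<F_g$, and this holds if and only if $N\subseteq H$: if $H=q^{-1}(K)$ then $N=q^{-1}(1)\subseteq H$, while conversely if $N\subseteq H$ then $K:=q(H)$ satisfies $q^{-1}(K)=HN=H$.

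Next I would translate the degree condition into monodromy. Fix a meridian $\delta$ with based representative $d\in\pi_1(\Sigma)$. The preimage of $\delta$ in $\Sigma'$ covers $\delta\cong S^1$, and its components are exactly the orbits of $\langle d\rangle$ acting on the fiber $H\backslash\pi_1(\Sigma)$ by right multiplication, the degree of a component equalling the length of its orbit. Thus $\delta$ lifts with every component of degree $1$ if and only if $Hgd=Hg$ for all $g$, i.e. $gdg^{-1}\in H$ for all $g$, i.e. $\langle\langle d\rangle\rangle\subseteq H$. (This is visibly independent of the path used to base $\delta$, since conjugating $d$ does not change the condition.)

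Finally I would combine the two translations. If $N\subseteq H$, then for any meridian $\delta$ we have $d\in N$, hence $\langle\langle d\rangle\rangle\subseteq N\subseteq H$ as $N$ is normal, so $\delta$ lifts with degree $1$. Conversely, if every meridian lifts with degree $1$, then in particular $\langle\langle m_i\rangle\rangle\subseteq H$ for the reduced disk system $m_1,\dots,m_g$, and since these normally generate $N$ we obtain $N=\langle\,\bigcup_i\langle\langle m_i\rangle\rangle\,\rangle\subseteq H$; together with the second paragraph this gives the equivalence. The step requiring the most care is the monodromy computation together with the bookkeeping of the quantifier over meridians: one direction uses only that a finite generating system of meridians lies in $H$, whereas the other relies on normality of $N$ so that the full normal closure of each single meridian is captured. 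The purely geometric inputs — that finite covers of handlebodies are handlebodies, and that the boundary cover is connected of equal degree — are standard, and I would simply cite them.
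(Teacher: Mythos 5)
Your proof is correct, and there is in fact nothing in the paper to compare it against: the lemma is invoked there as a ``standard lemma'' and no proof is given, so your argument fills a genuine gap. The group-theoretic translation is the right one and every step checks out: $N=\ker\bigl(q\colon\pi_1(\Sigma)\to\pi_1(V)\bigr)$ is indeed the normal closure of the curves of a reduced disk system (attaching the $2$--handles along these curves and then a $3$--ball kills exactly their normal closure); surjectivity of $q$ gives both the connectedness of $\partial V'$ (one double coset) and the equality of degrees; the monodromy computation shows that all components of the preimage of a based loop $d$ have degree $1$ if and only if $\langle\langle d\rangle\rangle\subseteq H$; and the two inclusions combine exactly as you say. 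Two minor remarks. First, covers correspond to \emph{conjugacy classes} of subgroups, so ``$H=q^{-1}(K)$'' should be ``$H$ conjugate to $q^{-1}(K)$''; this is harmless because $N$ is normal, so the condition $N\subseteq H$ is conjugation-invariant. Second, you interpreted ``lifts with degree $1$'' as ``every component of the preimage maps with degree $1$,'' which is the reading under which the equivalence holds for arbitrary finite covers; under the weaker reading ``some elevation is closed'' the reverse implication would be doubtful for non-normal covers, but the two readings coincide for the regular covers to which the paper actually applies the lemma, so your choice is the correct and natural one.
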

Thus, Theorem~\ref{thm:covers-main} will follow, once we can prove the following.
In its formulation, an \emph{elevation} of a simple closed curve $\alpha$ on $\Sigma$ (with
respect to a cover $p:\Sigma'\to \Sigma$) is any connected component of $p^{-1}(\alpha)$.
\begin{proposition}
  Suppose that  $\Sigma' \to \Sigma$ is a regular finite cover, and $\Sigma = \partial V$, $\Sigma' = \partial V'$.
  Assume that any elevation of a meridian for $V$ is a meridian for $V'$. Then every meridian of $V$ lifts with degree $1$.
\end{proposition}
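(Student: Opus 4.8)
The plan is to reformulate everything group-theoretically and then reduce to a single finite obstruction. Write $p:\Sigma'\to\Sigma$ for the cover, $G$ for its deck group, and $\phi:\pi_1(\Sigma)\to G$ for the associated surjection; for a simple closed curve $c$, the degree with which each elevation covers $c$ equals the order of $\phi(c)\in G$ (well-defined on the conjugacy class), so ``$c$ lifts with degree $1$'' is the same as $\phi(c)=1$. Let $K=\ker(\pi_1(\Sigma)\to\pi_1(V))$, the subgroup whose simple representatives are exactly the meridians. The key structural input is that $K$ is normally generated by the $g$ meridians $b_1,\dots,b_g$ of a reduced disk system (these are the standard relators exhibiting $\pi_1(V)=\pi_1(\Sigma)/\langle\langle b_1,\dots,b_g\rangle\rangle$ as a free group). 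Hence $\phi(K)$ is the normal closure of $\{\phi(b_1),\dots,\phi(b_g)\}$, and ``every meridian lifts with degree $1$'' is equivalent to $\phi(b_i)=1$ for all $i$. So it suffices to show that each curve of one fixed reduced disk system --- in particular, each of a family of \emph{nonseparating} meridians --- lifts with degree $1$.

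The obstruction I would use is that two meridians of $V'$ always meet in an even number of points: their $\mathbb{F}_2$--homology classes lie in the Lagrangian $\ker(H_1(\Sigma';\mathbb{F}_2)\to H_1(V';\mathbb{F}_2))$, or, equivalently for our purposes, by Lemma~\ref{lem:wave} two meridians cannot intersect in exactly one point (a wave would need a subarc meeting the other curve in two endpoints). By hypothesis every elevation of a meridian of $V$ is a meridian of $V'$, so it suffices to produce two such elevations meeting an odd number of times to reach a contradiction.

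Now suppose some $\delta:=b_i$ has $\phi(\delta)=g_0\neq 1$, and first treat the case where $H:=\langle g_0\rangle$ is a \emph{proper} subgroup of $G$; then $\delta$ has several elevations, indexed by $G/H$. I would pick an auxiliary meridian $\alpha$ with $i(\alpha,\delta)=2$ and consider the two intersection points $x_1,x_2$ together with the ``return loop'' $\ell$ built from the subarc of $\delta$ and the subarc of $\alpha$ running between $x_1$ and $x_2$. Tracking the monodromy shows that the two preimages of $x_1,x_2$ lying on a fixed elevation $\tilde\alpha$ of $\alpha$ sit on the same elevation of $\delta$ if and only if $\phi(\ell)\in H$. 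Thus if $\alpha$ can be chosen with $\phi(\ell)\notin H$, some elevation $\tilde\alpha$ meets the elevation of $\delta$ through $x_1$ in exactly one point --- an odd (indeed single) intersection of two meridians of $V'$, contradicting the above. The existence of such a meridian $\alpha$ is where the richness of the meridian set (and the hypothesis $g\geq 3$) is spent: I would produce $\alpha$ by a band-sum/wave construction (Lemma~\ref{lem:wave}, Lemma~\ref{cut-system-graph-connected}) so that its arc between $x_1$ and $x_2$ realizes a prescribed element of $G$ outside $H$.

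Finally, this reduces matters to the case where $\phi(\delta)$ generates $G$, forcing $G$ cyclic and $\tilde\delta$ connected; and by replacing $\delta$ with a band-sum meridian of two of the $b_i$ (whose $\phi$--image is a sum of generators) one can further reduce to $G=\mathbb{Z}/p$ prime, where every nontrivially covered meridian has a single, $G$--invariant elevation. I expect this prime-cyclic, connected-elevation case to be the main obstacle: here all pairwise intersection numbers of elevations are automatically even, so the parity argument degenerates and a genuinely different idea is needed. The natural candidates are to show that the deck group preserves enough meridians of $V'$ to lie in the handlebody group $\mathcal{H}(V')$ (via Lemma~\ref{lem:meridian-action} and the reduced-disk-system criterion), which would realize the cover as a quotient of handlebodies and finish; or an $\mathbb{F}_p$--equivariant homological analysis of the invariant elevation $\tilde\delta=\partial D'$. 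Settling this case cleanly --- and isolating exactly where $g\geq 3$ is truly needed --- is the crux of the proof.
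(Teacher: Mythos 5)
Your general setup is fine --- reducing to the curves of a reduced disk system, and using that two meridians of $V'$ must meet in an even number of points, are both correct --- but the proposal has a genuine gap, and you have located it yourself: the cyclic ``crux'' case is left unsolved, with only candidate strategies named. This is not a residual verification; it is precisely where the paper's proof does almost all of its work. After the wave-based dichotomy (Lemma~\ref{lem:recur-contradiction} and assumption \textbf{AR}), the paper is left with the case that the image of $K=\ker(\pi_1(\Sigma)\to\pi_1(V))$ in the deck group $G$ is a cyclic \emph{normal} subgroup $\langle m\rangle$ --- note this is more general than your ``$G$ cyclic'' case, since $G/\langle m\rangle$ is arbitrary. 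The paper then passes to the intermediate cover $\Sigma'/\langle m\rangle$, on which every meridian lifts with degree one and the residual cyclic cover is detected by algebraic intersection with a fixed curve $\alpha$; proves that the intermediate deck group carries $[\alpha]$ to $\pm[\alpha]$; deduces the sign-flip identity $i(\delta_i^+,\alpha)=-i(\delta_i^-,\alpha)$ for the two lifts of each curve of a reduced disk system (Lemma~\ref{lem:sign-swaps}); and finally runs an induction on intersection numbers, using disk-system exchange moves, to manufacture a meridian with intersection number zero with $\alpha$, i.e.\ one lifting with degree one (Lemma~\ref{lem:induct-intersect}), after which Corollary~\ref{cor:one-is-enough} finishes. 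None of this homological induction appears in your proposal, and your candidate strategies would need substantial inputs the paper never uses: for instance, upgrading ``each deck transformation extends to $V'$'' to an honest group action on $V'$ is an equivariant realization problem, not a formality.

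There are also two problems upstream of the crux case. First, the parity inference in your case $H=\langle\phi(\delta)\rangle\subsetneq G$ does not work as stated: $\tilde\alpha$ covers $\alpha$ with some degree $d$ and therefore carries $d$ lifts of each of $x_1,x_2$, so knowing that one lift of $x_1$ and one lift of $x_2$ land on different elevations of $\delta$ does not make any geometric intersection number odd. Concretely, if $\phi(\alpha)\in H$ has even order $d$ and $\phi(\ell)\notin H$, then the lifts of $x_1$ on $\tilde\alpha$ all lie on one elevation of $\delta$ and the lifts of $x_2$ all lie on another, so both intersection numbers equal $d$, which is even. Arranging $\phi(\alpha)$ to be trivial or of odd order is circular, since controlling $\phi$ on meridians is exactly what is being proved; this is presumably why the paper's corresponding step does not use parity but the wave obstruction of Lemma~\ref{lem:wave}, which excludes the constructed intersection pattern (no two consecutive intersection points on the same component of the preimage multicurve) regardless of degrees and parities. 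Second, your further reduction to $G=\mathbb{Z}/p$ prime by passing to quotients is unavailable: the hypothesis that elevations of meridians are meridians refers to the handlebody $V'$ bounded by $\Sigma'$, and an intermediate quotient surface comes equipped with no handlebody, so the hypothesis does not descend to subcovers.
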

For the rest of the section, we fix the cover  $\Sigma' \to \Sigma$ and assume that it
is given by a surjection
\[ \pi: \pi_1(\Sigma, p) \to G \]
to some finite group $G$. The core tool we use is the existence of waves (compare Lemma~\ref{lem:wave}).

The first part of the proof involves trying to construct a pair of
meridians whose elevations intersect in a manner incompatible with
being meridians. Namely, we have the following.
\begin{lemma}\label{lem:recur-contradiction}
  Suppose that there is a pair of meridians $\alpha,\beta$ intersecting only in the basepoint $p$
  such that $\pi(\beta)$ is not equal to a power of $\pi(\alpha)$ in $G$. Then there is a 
  meridian $\delta$ so that elevations of $\alpha$ and $\delta$ cannot be simultaneously be
  meridians (for any handlebody).
\end{lemma}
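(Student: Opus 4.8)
The plan is to isolate a single, handlebody-independent obstruction to being a meridian, and then to engineer it in the cover. The key point is that for \emph{any} handlebody $V'$ with $\partial V' = \Sigma'$, every meridian lies in the Lagrangian subspace $\ker\bigl(H_1(\Sigma')\to H_1(V')\bigr)$, on which the intersection form vanishes; hence any two meridians have algebraic intersection number $0$, and in particular an even geometric intersection number. So to prove the lemma it suffices to exhibit a meridian $\delta$ on $\Sigma$ and an elevation $\tilde\alpha$ of $\alpha$ together with an elevation $\tilde\delta$ of $\delta$ whose algebraic intersection number is \emph{nonzero}: such a pair cannot bound disks in any handlebody, which is exactly the desired conclusion. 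Note that, being meridians, $\alpha$ and $\delta$ must cross an even number of times downstairs, so the two cancelling intersections will have to be \emph{separated onto distinct elevations} upstairs in order to survive — this splitting is the whole reason for passing to the cover, and is precisely what the genus~$2$ computation behind Theorem~\ref{thm:flexible} exhibited.

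Concretely, I would take $\delta$ to be a meridian meeting $\alpha$ in exactly two points $x_1,x_2$ (necessarily of opposite sign, since $\hat{i}(\alpha,\delta)=0$ on $\Sigma$) arranged so that the loop formed by an arc of $\alpha$ from $x_1$ to $x_2$ followed by an arc of $\delta$ back to $x_1$ is freely homotopic to $\beta$. Such a $\delta$ is produced from $\beta$ by a finger/band-sum move that pushes a strand across $\alpha$ and back; that the resulting curve is again a meridian is exactly the kind of statement certified by the wave criterion of Lemma~\ref{lem:wave} (which guarantees the band-sum arcs close up to meridians).

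To read off intersection numbers of elevations I would reduce everything to coset combinatorics in $G$. Identifying the fibre over $p$ with $G$, the elevation $\tilde\alpha_0$ of $\alpha$ through the basepoint lift meets a fixed elevation of $\delta$ in a number of points governed by the subgroup $\langle\pi(\alpha)\rangle\cap\langle\pi(\delta)\rangle$, and the two crossings $x_1,x_2$ contribute to $\hat{i}(\tilde\alpha_0,\tilde\delta_0)$ with opposite signs. These contributions cancel if and only if $x_1$ and $x_2$ lie on the \emph{same} elevation of $\delta$, which happens precisely when the monodromy offset of the cobounding loop is absorbed by the relevant double coset, i.e. when $\pi(\beta)\in\langle\pi(\alpha)\rangle\langle\pi(\delta)\rangle$. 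When they do not cancel one gets $\hat{i}(\tilde\alpha_0,\tilde\delta_0)=\pm\,\lvert\langle\pi(\alpha)\rangle\cap\langle\pi(\delta)\rangle\rvert\neq 0$. (Bookkeeping this cleanly is most efficient via the equivariant intersection class $\mu\in\ZZ[G]$, whose $x_1$ and $x_2$ terms are $+e$ and $-\pi(\beta)$; the elevation-pair intersection number is the coefficient extracted over the double coset of $e$.) Geometrically, the two crossings separate onto distinct elevations of $\delta$ exactly because their monodromy offset $\pi(\beta)$ escapes $\langle\pi(\alpha)\rangle$.

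The main obstacle is therefore purely group-theoretic: the hypothesis only gives $\pi(\beta)\notin\langle\pi(\alpha)\rangle$, whereas the cancellation criterion demands the stronger $\pi(\beta)\notin\langle\pi(\alpha)\rangle\langle\pi(\delta)\rangle$. The cleanest resolution is to arrange $\pi(\delta)\in\langle\pi(\alpha)\rangle$ — ideally $\pi(\delta)=e$, so that $\delta$ lifts homeomorphically — for then $\langle\pi(\alpha)\rangle\langle\pi(\delta)\rangle=\langle\pi(\alpha)\rangle$ and the condition collapses to exactly the hypothesis; realizing the finger move by a meridian with this prescribed (trivial) monodromy, while keeping the two crossings cobounding $\beta$, is where I expect the room provided by genus $g\geq 3$ to be used, with waves again certifying meridianness. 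If such direct control is unavailable, I anticipate one must \emph{recurse}: replace $(\alpha,\beta)$ by a new pair of meridians meeting at a basepoint whose offset still escapes the relevant cyclic subgroup but now lies in a strictly smaller subquotient of $G$, and iterate until the product set $\langle\pi(\alpha)\rangle\langle\pi(\delta)\rangle$ is forced to miss $\pi(\beta)$ (the odd-order case being the base case, where $\lvert\langle\pi(\alpha)\rangle\cap\langle\pi(\delta)\rangle\rvert$ is automatically nontrivial and no cancellation can occur). Guaranteeing that this descent produces genuine meridians at every stage — once more through Lemma~\ref{lem:wave} — is the delicate technical heart of the argument.
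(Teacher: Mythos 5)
Your reduction to a homological obstruction is sound as far as it goes: for any handlebody $V'$ with $\partial V'=\Sigma'$, meridians lie in the Lagrangian $\ker\bigl(H_1(\Sigma')\to H_1(V')\bigr)$, so a pair of elevations with nonzero algebraic intersection number can never both be meridians, and your double-coset computation of when the two lifted crossings cancel is essentially correct. But the proposal stops exactly where the real work begins: you must actually produce a simple meridian $\delta$ meeting $\alpha$ in two points whose cobounding loop is (a suitable conjugate of) $\beta$ \emph{and} whose monodromy $\pi(\delta)$ lies in $\langle\pi(\alpha)\rangle$, and you carry out neither the ``finger move with prescribed trivial monodromy'' nor the recursion you offer as a fallback --- you yourself flag this as the technical heart. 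It is not routine: an honest finger move across $\alpha$ creates a bigon, hence a curve isotopic to $\beta$ with trivial cobounding loop, so what you actually need is a band sum of a push-off of $\beta$ with a second embedded meridian on the other side of $\alpha$ carrying monodromy conjugate to $\pi(\beta)^{-1}$, and nothing in the hypotheses provides such a curve. Worse, $\pi(\delta)=e$ means $\delta$ lifts with degree one, and in the setting where this lemma is deployed (it is the contradiction engine behind Corollary~\ref{cor:one-is-enough} and assumption \textbf{AR}) the existence of degree-one-lifting meridians in prescribed positions is essentially what is at stake, so postulating one is close to circular. Two smaller problems: Lemma~\ref{lem:wave} does not ``certify meridianness'' of band sums --- it is a necessary intersection condition on systems of meridians together with a surgery statement, not a criterion for an arbitrarily constructed curve to bound a disk; and your cancellation criterion really involves specific conjugates of $\pi(\alpha)$, $\pi(\beta)$, $\pi(\delta)$ determined by basing arcs, while the hypothesis $\pi(\beta)\notin\langle\pi(\alpha)\rangle$ is not invariant under conjugating the two elements by \emph{different} conjugators. (Also, the parenthetical about the ``odd-order base case'' is confused: $\langle\pi(\alpha)\rangle\cap\langle\pi(\delta)\rangle$ always contains the identity, and its size is irrelevant to whether cancellation occurs.)

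The paper avoids all monodromy control on $\delta$ by using a strictly stronger, non-homological obstruction: the wave criterion of Lemma~\ref{lem:wave}. It sets $\beta'=\beta\alpha$ (whose image is again not a power of $\pi(\alpha)$), chooses an auxiliary meridian $\alpha'$ disjoint from $\alpha,\beta$ and a curve $\rho$ meeting each of $\alpha,\beta,\beta',\alpha'$ once (this is where $g\geq 3$ enters), and takes $\delta=\beta*\rho*\beta'^{-1}*\rho^{-1}$, which is simple and is a meridian because it is a product of conjugates of meridians, hence nullhomotopic in $V$. The hypothesis is used only to conclude that lifts of $\beta$ and $\beta'$ join \emph{distinct} elevations of $\alpha$; this forces consecutive intersection points of any elevation $\widetilde{\delta}$ with the preimage of $\alpha\cup\alpha'$ to lie on distinct components, so no wave exists, contradicting Lemma~\ref{lem:wave} if all elevations were meridians. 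No sign bookkeeping and no condition on $\pi(\delta)$ is needed, which is precisely why the paper's argument closes under the stated hypothesis while yours, as written, does not.
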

\begin{proof}
\begin{figure}
  \centering
  \includegraphics[width=0.9\textwidth]{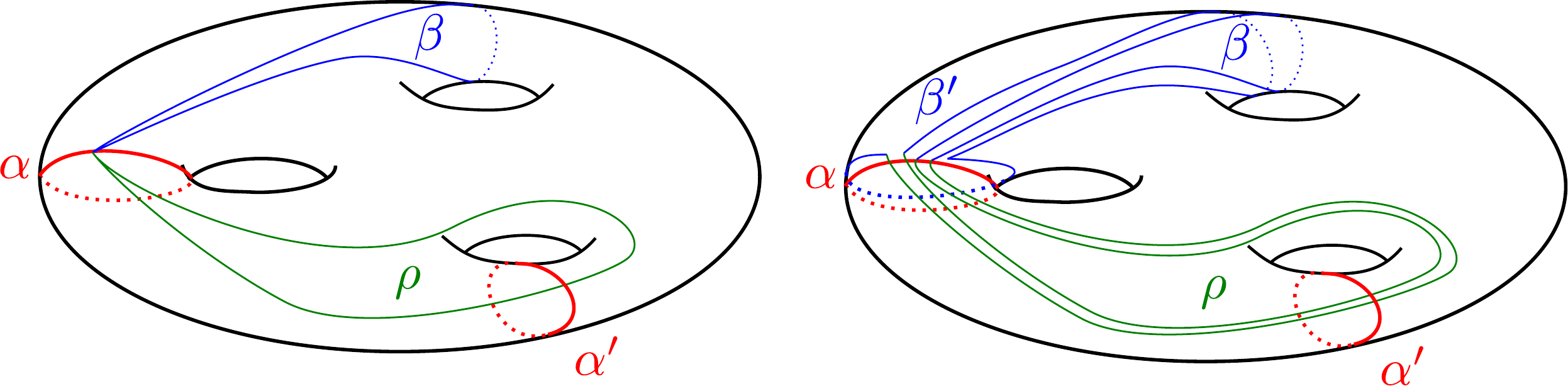}
  \caption{The construction in the proof of Lemma~\ref{lem:recur-contradiction}. The left figure shows 
    the setup; the right one the curve constructed in the proof}
  \label{fig:recur}
\end{figure}
  First note that we may assume that $\beta$ is non-separating in
  $\Sigma-\alpha$, since non-separating simple meridians generate the kernel
  $\ker(\pi_1(\partial W) \to \pi_1(W))$ for any handlebody $W$. 
  Note that $\beta'=\beta\alpha$ (or $\alpha\beta$) also has the
  property that $\pi(\beta')$ is not equal to a power of $\pi(\alpha)$
  in $G$. This means that (non-closed) lifts of $\beta$ and $\beta'$ 
  connect different elevations of $\alpha$ in $\Sigma'$.

  Next, choose a curve $\rho$ intersecting $\alpha, \beta, \beta'$ in a single point,
  and transversely intersecting a meridian $\alpha'$ disjoint from $\alpha, \beta$ in a single point (this is where we use genus $g\geq 3$ to ensure the existence of the desired $\alpha'$).
  
  The desired curve is
  \[ \delta = \beta * \rho * \beta'^{-1} * \rho^{-1} \] which is a
  simple meridian (compare Figure~\ref{fig:recur}). Consider an
  elevation $\widetilde{\delta}$ of $\delta$. By our choices,
  consecutive intersection points of $\widetilde{\delta}$ with
  components of the preimage of $\alpha\cup\alpha'$ are never 
  on the same component of $\alpha\cup\alpha'$.
  
  However, if $\widetilde{\delta}$ and all elevations of $\alpha, \alpha'$ are meridians,
  this is a contradiction, since by Lemma~\ref{lem:wave} the meridian  $\widetilde{\delta}$
  should have a wave with respect to the preimage of  $\alpha\cup\alpha'$.
\end{proof}

\begin{corollary}\label{cor:one-is-enough}
  Assume that all meridians for $V$ elevate to meridians for $V'$.
  Suppose that some meridian for $V$ lifts to $\Sigma'$ with degree $1$. Then all meridians for $V$
  lift to $\Sigma'$ with degree $1$.
\end{corollary}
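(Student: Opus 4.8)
The plan is to read Lemma~\ref{lem:recur-contradiction} as a rigid constraint forced by the standing hypothesis, and then to bootstrap the single given degree-one meridian into an entire reduced disk system of degree-one meridians. Throughout I work with the defining surjection $\pi:\pi_1(\Sigma,p)\to G$, and I use that a meridian $\delta$ lifts with degree $1$ precisely when its free homotopy class lies in $\ker\pi$, i.e.\ when $\pi(\delta)=1$; note that since the conjugacy class of the identity is a singleton, this condition is independent of the chosen based representative.

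First I extract the contrapositive of Lemma~\ref{lem:recur-contradiction}. By hypothesis every elevation of every meridian is again a meridian, so the conclusion of that lemma can never hold: no pair of meridians can produce the curve $\delta$ whose elevation fails to be a meridian. Consequently, for any two meridians $\alpha,\beta$ that meet only in the basepoint $p$ — equivalently, disjoint meridians with $\beta$ non-separating in $\Sigma-\alpha$, realized by based loops meeting only at $p$ — we must have $\pi(\beta)\in\langle\pi(\alpha)\rangle$. The key point I will use is the degenerate case: if $\pi(\alpha)=1$, then the only power of the identity is the identity, so $\pi(\beta)=1$ as well.

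The propagation then proceeds as follows. Let $\alpha_0$ be the given degree-one meridian. If $\alpha_0$ is separating, I first replace it: using $g\geq 3$, I choose a non-separating meridian $\beta_0$ disjoint from $\alpha_0$ and non-separating in $\Sigma-\alpha_0$, and apply the previous paragraph with $\alpha=\alpha_0$ to get $\pi(\beta_0)=1$. Thus I may assume the degree-one meridian is non-separating, and extend it to a reduced disk system $\alpha_0,\mu_2,\dots,\mu_g$ for $V$. Each $\mu_i$ is disjoint from $\alpha_0$, and because the complement of the whole system is connected, each $\mu_i$ is non-separating in $\Sigma-\alpha_0$; hence each pair $(\alpha_0,\mu_i)$ is of the type above and $\pi(\mu_i)\in\langle\pi(\alpha_0)\rangle=\{1\}$, so $\pi(\mu_i)=1$ for all $i$. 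Finally, the curves of a reduced disk system normally generate $N=\ker(\pi_1(\Sigma)\to\pi_1(V))$, so $\pi(\mu_i)=1$ for all $i$ forces $\pi|_N=1$. Since every meridian lies in $N$, every meridian lifts with degree $1$, which is the assertion.

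The main obstacle is the careful bookkeeping needed to place a given pair of disjoint meridians into the precise configuration demanded by Lemma~\ref{lem:recur-contradiction}: one must produce based representatives of $\alpha_0$ and $\mu_i$ meeting only in $p$, and check that the auxiliary band sum $\beta\alpha$ used there is again a simple meridian, so that the lemma genuinely applies to the pair. The subsidiary topological claims — that a separating degree-one meridian can be traded for a disjoint non-separating one, and that each component of a reduced disk system is non-separating both in $\Sigma$ and after removing $\alpha_0$ — are routine but should be recorded explicitly.
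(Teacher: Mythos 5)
Your non-separating case is correct, and its endgame is genuinely different from (and arguably cleaner than) the paper's: where the paper propagates the degree-one property through the disk graph using connectivity (chains of disjoint meridians), you extend the degree-one meridian $\alpha_0$ to a reduced disk system, apply the contrapositive of Lemma~\ref{lem:recur-contradiction} to each pair $(\alpha_0,\mu_i)$ to get $\pi(\mu_i)=1$, and then finish in one stroke: the curves of a reduced disk system normally generate $\ker(\pi_1(\Sigma)\to\pi_1(V))$, the kernel of $\pi$ is normal, hence $\pi$ kills every meridian. That argument, including the observation that degree-one lifting is the conjugation-invariant condition $\pi(\delta)=1$, is sound.

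The gap is your separating case. There you invoke Lemma~\ref{lem:recur-contradiction} with $\alpha=\alpha_0$ separating. The lemma's statement does not formally exclude this, but its proof fails for separating $\alpha$: the construction requires a curve $\rho$ intersecting $\alpha$ in a single point, and no closed curve meets a separating curve in exactly one point (mod $2$ intersection number with a separating curve vanishes). This is not a pedantic objection: the paper itself only ever uses the lemma for non-separating $\alpha$ (its standing assumption \textbf{AR} is explicitly restricted to non-separating meridians), and its own proof of Corollary~\ref{cor:one-is-enough} handles the separating case by a different argument rather than by citing the lemma. Concretely, the paper argues: if some side of $\alpha_0$ contains a non-separating meridian lifting with degree one, one reduces to the non-separating case; otherwise one takes a non-lifting meridian from \emph{each} side of $\alpha_0$, forms their concatenation through a point of $\alpha_0$ (again a meridian), and checks that every arc of an elevation of this concatenation runs between \emph{distinct} components of the preimage of $\alpha_0$, so that no wave exists, contradicting Lemma~\ref{lem:wave}. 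Your proof needs either this two-sided construction or an independent proof that Lemma~\ref{lem:recur-contradiction} holds for separating $\alpha$; as written, the step ``apply the previous paragraph with $\alpha=\alpha_0$'' rests on a case of the lemma that the paper never establishes.
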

\begin{proof}
  Let $\delta$ be a meridian which lifts with degree $1$. Suppose that $\delta$ is non-separating. 
  Then there are two cases: either every meridian disjoint from $\delta$ lifts with degree $1$, or not.
  In the latter case, by Lemma~\ref{lem:recur-contradiction}, there is a contradiction. In the former
  case, we argue using connectivity of the disk graph: either all non-separating meridians lift with degree
  $1$ (in which case we are done), or we eventually end up in the first case.

  Finally, suppose that $\delta$ is separating. If on either side of
  $\delta$ there is a non-separating meridian which lifts with degree
  $1$, we are done. Otherwise, argue as in the proof of
  Lemma~\ref{lem:recur-contradiction} to find a meridian $\beta$ whose elevation intersects the preimage
  of $\delta$ without waves. To do this, we simply take the concatenation of non-lifting meridians on
  either side of $\delta$.
\end{proof}

Hence, for the rest of the section, we can make the following assumption
\begin{description}
\item[AR] For any non-separating meridian $\alpha$ through $p$ , and any loop $\rho$
  which recurs to the same side of $\alpha$, and is a meridian for $V$,
  $\pi(\rho)$ is a power of $\pi(\alpha)$ in $G$.
\end{description}
Note that this first implies that any conjugate of $\alpha$ also maps to a power of $p(\alpha)$
(conjugations by all of the standard generators of $\pi_1(\Sigma)$ lie in the complement of
some meridian $\rho$ as in \textbf{AR}).
Since any two meridians can be joined by a sequence of pairwise
disjoint meridians, this implies that the image of the kernel
\[ K = \ker(\pi_1(\Sigma) \to \pi_1(V)) \]
in $G$ is cyclic, generated by some element $m$. As $K$ is normal, and $\pi_1(\Sigma)\to G$ surjective,
the subgroup $\langle m \rangle$ is therefore normal in $G$. We thus have a tower of regular coverings
\[ \Sigma' \to \Sigma'/\langle m \rangle \to \Sigma. \]
By construction, and the fact that every cyclic cover of a surface is defined by algebraic intersection with
some curve, we therefore know:
\begin{enumerate}[i)]
\item Every meridian for $V$ lifts with degree $1$ to $\Sigma'/\langle m \rangle$.
\item There is $n>0$ and a simple closed curve $\alpha \subset
  \Sigma'/\langle m \rangle$ so that a curve $\beta \subset
  \Sigma'/\langle m \rangle$ lifts to $\Sigma'$ with degree $1$ if and
  only if $i(\beta, \alpha) = 0$ mod $n$.
\end{enumerate}
\begin{lemma}
  Let $H = G/\langle m \rangle$ denote the deck group of $\Sigma'/\langle m \rangle \to \Sigma$,
  and let $\alpha$ be as in ii). Then, for any $h \in H$, we have
  \[ h[\alpha] = \pm [\alpha] \in H_1(\Sigma'/\langle m \rangle) \]
\end{lemma}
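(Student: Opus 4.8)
The plan is to show that the mod $n$ homology class of $\alpha$ is moved by the deck group $H$ only through the scalar by which $H$ acts, by conjugation, on the cyclic group $\langle m\rangle$, and then to use the standing meridian hypothesis to see that this scalar is $\pm1$. First I would record the homological reformulation of ii). Write $n$ for the order of the cyclic cover $p\colon\Sigma'\to\Sigma'/\langle m\rangle$ and $\langle\cdot,\cdot\rangle$ for the algebraic intersection pairing. Then ii) says precisely that this cover is classified by the homomorphism
\[ \psi=\langle\,\cdot\,,[\alpha]\rangle \bmod n\colon H_1(\Sigma'/\langle m\rangle;\ZZ)\to\ZZ/n, \]
that is, a loop lifts to $\Sigma'$ with degree $1$ if and only if it lies in $\ker\psi$. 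Since $\Sigma'/\langle m\rangle$ is a closed orientable surface the intersection pairing is unimodular, so it stays nondegenerate on $H_1(\Sigma'/\langle m\rangle;\ZZ/n)$; hence $[\alpha]\bmod n$ is the \emph{unique} class with $\psi=\langle\,\cdot\,,[\alpha]\rangle$. This is the object I will track under $H$.

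Next I would lift the action. Because $\langle m\rangle\trianglelefteq G$, the tower $\Sigma'\to\Sigma'/\langle m\rangle\to\Sigma$ is a tower of regular covers, and each $h\in H$, viewed as a deck transformation of $\Sigma'/\langle m\rangle$ over $\Sigma$, lifts to a deck transformation $\tilde h$ of $\Sigma'$ over $\Sigma$. This $\tilde h$ normalizes the deck group $\langle m\rangle$ of $p$ and conjugates it by $\chi(h)\in(\ZZ/n)^\times$, where $\chi\colon H\to(\ZZ/n)^\times$ is the well-defined conjugation action of $G$ on its normal cyclic subgroup $\langle m\rangle$ (factoring through $H$ since $\langle m\rangle$ is abelian). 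Chasing this through the classifying homomorphism gives the equivariance $\psi\circ h_*=\chi(h)\,\psi$. As $h_*$ preserves the mod $n$ pairing, this reads $\langle x,h_*^{-1}[\alpha]\rangle=\chi(h)\langle x,[\alpha]\rangle$ for all $x$, and nondegeneracy yields
\[ h_*[\alpha]=\chi(h)^{-1}[\alpha]\in H_1(\Sigma'/\langle m\rangle;\ZZ/n). \]
Thus $[\alpha]$ spans an $H$-invariant line on which $H$ acts through the character $\chi^{-1}$.

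The remaining point, namely that $\chi(h)=\pm1$, is where I expect the main obstacle to lie, and where the hypothesis that every elevation of a meridian is again a meridian must enter: pure covering theory permits $\chi$ to be an arbitrary character (take $G$ metacyclic, with $H$ acting on $\langle m\rangle$ by a unit of order $>2$), so the $\pm1$ cannot be formal. Here I would exploit that, by i) together with the cover-extension lemma, $\Sigma'/\langle m\rangle$ bounds a handlebody $W$ covering $V$; hence $H$ acts on $W$ and permutes its meridians, which are exactly the elevations of meridians of $V$. For such a meridian $\mu_W$ lying over $\mu$ one computes $\psi(\mu_W)=\pi(\mu)\in\langle m\rangle$, so $\psi$ records exactly the degree by which $\mu$ fails to lift to $\Sigma'$. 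The plan is to argue that if $\chi(h)\neq\pm1$ for some $h$, then — mimicking the construction of Lemma~\ref{lem:recur-contradiction} — one assembles a meridian whose elevation to $\Sigma'$ meets the preimage of a meridian with consecutive intersection points forced onto distinct components, producing a configuration with no wave, in violation of Lemma~\ref{lem:wave}. Carrying out this incompatibility cleanly, rather than the formal equivariance above, is the crux of the argument.

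Finally, once $\chi(h)=\pm1$ is established, the displayed identity gives $h_*[\alpha]=\pm[\alpha]$ in $H_1(\Sigma'/\langle m\rangle;\ZZ/n)$, which is the form in which the statement is used in the sequel; I would note that in the nontrivial case ($n>1$) the curve $\alpha$ is necessarily non‑separating, so $[\alpha]$ is primitive, and it is this mod $n$ equality of the (quasi‑)invariant line that drives the subsequent descent of $\alpha$ to $\Sigma$.
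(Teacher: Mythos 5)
Your formal computation is correct as far as it goes, but the proposal does not prove the lemma: the entire content of the statement is the assertion that the multiplier is $\pm 1$, and that is exactly the step you label ``the crux'' and leave as an unexecuted plan. The equivariance $h_*[\alpha] \equiv \chi(h)^{-1}[\alpha] \bmod n$, with $\chi$ the conjugation character of $H$ on $\langle m \rangle$, is a correct (and sharper) restatement of the fact that normality of $\Sigma' \to \Sigma$ makes the set of degree-one-lifting classes $H$-invariant; but identifying the multiplier with a character does not by itself constrain it. Your proposed route to $\chi(h) = \pm 1$ --- transplanting the construction of Lemma~\ref{lem:recur-contradiction} into the intermediate cover to produce a configuration violating Lemma~\ref{lem:wave} --- is only a sketch, and it is precisely in such constructions (choosing the meridian, keeping it simple, forcing the no-wave intersection pattern) that the real work of this section lies. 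As submitted, the proposal reduces the lemma to an open step rather than proving it. A secondary discrepancy: even if your plan were carried out, it would yield $h_*[\alpha] = \pm[\alpha]$ only in $H_1(\Sigma'/\langle m \rangle;\ZZ/n)$, whereas the lemma asserts, and Lemma~\ref{lem:sign-swaps} later uses with honest integer intersection numbers, the integral statement.

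For comparison, the paper's own proof is purely formal and never invokes the meridian hypotheses: normality makes the degree-one-lifting set $H$-invariant; by ii) that set consists of the classes pairing to $0$ mod $n$ with $[\alpha]$; hence $h[\alpha]$ and $[\alpha]$ have the same annihilator and are proportional; and the multiple is declared to be $\pm 1$ ``as $H$ is finite.'' Your skepticism about this last inference is substantive: what the annihilator argument actually yields is a congruence $h[\alpha] \equiv c[\alpha] \bmod n$ with $c \in (\ZZ/n\ZZ)^\times$, and a finite-order unit mod $n$ need not be $\pm 1$ (for instance $2$ has order $3$ modulo $7$; your metacyclic example realizes exactly such a multiplier for a normal cover satisfying ii) but lacking the handlebody hypotheses). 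Finiteness of $H$ forces the multiple to be $\pm 1$ only if the proportionality were integral, which the kernel argument does not provide. So you have correctly isolated a point where the paper's proof is too quick, and your insistence that input beyond pure covering theory must enter is justified --- but having diagnosed the difficulty, your write-up does not resolve it, and therefore does not establish the lemma.
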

\begin{proof}
  Since the cover $\Sigma' \to \Sigma$ is normal, we have that a loop
  $\beta \subset \Sigma'/\langle m \rangle$ lifts to $\Sigma'$ with degree $1$ if and only if this
  is true for $h^{-1}\beta$, for every $h\in H$. In other words,
  \[ i(\beta, \alpha) = 0 \quad\Leftrightarrow\quad i(h\beta, \alpha) = i(\beta, h\alpha) = 0 \]
  Thus, $h\alpha$ has algebraic intersection number $0$ with exactly the same loops as $\alpha$.
  This implies that $h[\alpha]$ is a multiple of $[\alpha]$, and the multiple is $\pm 1$ as $H$ is
  finite.
\end{proof}

\begin{lemma}\label{lem:sign-swaps}
  Either some meridian for $V$ lifts to $\Sigma'$ with degree $1$, or
  the following is true: Let $\delta_1, \ldots, \delta_g$ be any
  reduced disk system for $V$, and let $Y$ be the complementary
  subsurface. Choose orientations on $\delta_i$. Let $Y' \subset
  \Sigma'/\langle m \rangle$ be a (homeomorphic) lift of $Y$, and let
  $\delta_i^+, \delta_i^-$ be lifts of $\delta_i$ on the boundary of
  $Y'$, oriented compatibly with the orientation of $\delta_i$. Then
  \[ i(\delta_i^+, \alpha) = -i(\delta_i^-, \alpha) \]
\end{lemma}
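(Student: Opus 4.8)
The plan is to reduce the whole statement to a single sign computation governed by the deck action on $[\alpha]$, and then to realize the exceptional ``wrong sign'' case by an explicit degree-one meridian.

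First I would check that $Y$ lifts homeomorphically to $\Sigma'/\langle m\rangle$. Every simple closed curve contained in $Y$ is disjoint from the reduced disk system $\delta_1,\dots,\delta_g$, hence a meridian, and by property i) above every meridian lifts with degree $1$ to $\Sigma'/\langle m\rangle$, i.e. maps trivially to $H$. As $\pi_1(Y)$ is generated by loops of this type, its image in $H$ is trivial, so the preimage of $Y$ is a disjoint union of $|H|$ homeomorphic copies of $Y$; fix one of them, $Y'$. Its boundary consists of the $2g$ circles $\delta_i^{\pm}$, each a closed lift of the corresponding $\delta_i$. Since the cover is regular, $H$ acts transitively on the lifts of $\delta_i$, so there is $h_i\in H$ with $h_i\delta_i^+=\delta_i^-$; because deck transformations cover the identity, they carry the compatible orientation of $\delta_i^+$ to that of $\delta_i^-$, so this is an equality of \emph{oriented} curves.

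Next I would invoke the preceding lemma, which gives a sign homomorphism $\epsilon\colon H\to\{\pm1\}$ determined by $h[\alpha]=\epsilon(h)[\alpha]$. Writing $u_i=i(\delta_i^+,\alpha)$ and using that deck transformations preserve the intersection pairing,
\[ i(\delta_i^-,\alpha)=i(h_i\delta_i^+,\alpha)=i(\delta_i^+,h_i^{-1}\alpha)=\epsilon(h_i)\,u_i, \]
where $\epsilon(h_i^{-1})=\epsilon(h_i)$ since $\epsilon$ takes values in $\{\pm1\}$. Thus the asserted identity $i(\delta_i^+,\alpha)=-i(\delta_i^-,\alpha)$ holds for the index $i$ exactly when $\epsilon(h_i)=-1$. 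I may therefore assume, aiming at the second alternative, that no meridian lifts with degree $1$ to $\Sigma'$, and it suffices to rule out $\epsilon(h_i)=+1$ for each $i$ of any reduced disk system.

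For this I would build, exactly as in the proof of Lemma~\ref{lem:recur-contradiction}, a meridian $\delta$ by band-summing $\delta_i$ with an oppositely oriented parallel copy of itself along a dual arc $\rho$ crossing $\delta_i$ once (using $g\geq 3$ to place $\rho$ and the auxiliary curves). This $\delta$ bounds a disk in $V$, and a lift $\widetilde\delta$ to $\Sigma'/\langle m\rangle$ traverses $\delta_i^+$, a lift of $\rho$, then $h_i\delta_i^+=\delta_i^-$ with reversed orientation, then the returning lift of $\rho$, so that $[\widetilde\delta]=[\delta_i^+]-[\delta_i^-]$ up to the contribution of the connecting arc. If $\epsilon(h_i)=+1$ then $i(\delta_i^+,\alpha)=i(\delta_i^-,\alpha)$, and these main terms cancel, forcing $i(\widetilde\delta,\alpha)\equiv 0 \bmod n$; by property ii) above $\delta$ would then lift to $\Sigma'$ with degree $1$, contradicting our standing assumption. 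Hence $\epsilon(h_i)=-1$ for every $i$, which is the desired sign swap. The main obstacle is precisely this last step: verifying that the band-summed curve is genuinely a meridian and, more delicately, that the contribution of the arc $\rho$ to $i(\widetilde\delta,\alpha)$ can be arranged to vanish mod $n$, so that the $\epsilon(h_i)=+1$ case really produces a degree-one meridian rather than a mere homological coincidence. Here the genus hypothesis and the wave lemma (Lemma~\ref{lem:wave}) are what keep the construction honest, controlling the intersection pattern of $\widetilde\delta$ with the preimage of $\alpha$.
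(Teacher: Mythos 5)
Your argument is correct and is essentially the paper's proof: both use the preceding lemma to get $i(\delta_i^-,\alpha)=\pm\, i(\delta_i^+,\alpha)$ via a deck transformation carrying $\delta_i^+$ to $\delta_i^-$, and both exclude the positive sign by exhibiting a meridian homologous to $[\delta_i^+]-[\delta_i^-]$, which would then have zero algebraic intersection with $\alpha$, hence lift to $\Sigma'$ with degree one by property ii), contradicting the standing assumption that no meridian does. The only real difference is where the auxiliary curve is built. The paper builds it directly upstairs: inside $Y'$ there is a simple closed curve $\mu$ which, together with the boundary circles $\delta_i^+$ and $\delta_i^-$, bounds a pair of pants, so $[\mu]=[\delta_i^+]-[\delta_i^-]$ on the nose; and since $\mu\subset Y'$ projects homeomorphically to a simple closed curve in $Y$, disjoint from the reduced disk system, it is automatically a degree-one lift of a meridian for $V$. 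You build the curve downstairs as the boundary of a neighborhood of $\delta_i\cup\rho$ (the commutator of $\delta_i$ with a dual loop) and then lift it; this works equally well, but it makes you worry about two points that are in fact non-issues. First, choosing $\rho$ disjoint from the other $\delta_j$, your curve is a simple closed curve in $Y$, hence a meridian by the standard fact quoted in Section~\ref{sec:handlebody-prereq} --- no appeal to Lemma~\ref{lem:recur-contradiction} or to $g\geq 3$ is needed. Second, the lift of the commutator loop is $\delta_i^+ * \tilde{\rho} * (\delta_i^-)^{-1} * \tilde{\rho}^{-1}$, where the two occurrences of $\tilde{\rho}$ are the \emph{same} arc traversed in opposite directions; their contributions to the homology class, and hence to the algebraic intersection number with $\alpha$, cancel identically, not merely mod $n$. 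In particular the ``main obstacle'' you flag at the end does not exist, and Lemma~\ref{lem:wave} is a red herring here: waves are needed to control \emph{geometric} intersection patterns (as in Lemma~\ref{lem:recur-contradiction}), whereas this lemma is purely about algebraic intersection numbers.
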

\begin{proof}
  \begin{figure}
  \centering
  \includegraphics[width=0.75\textwidth]{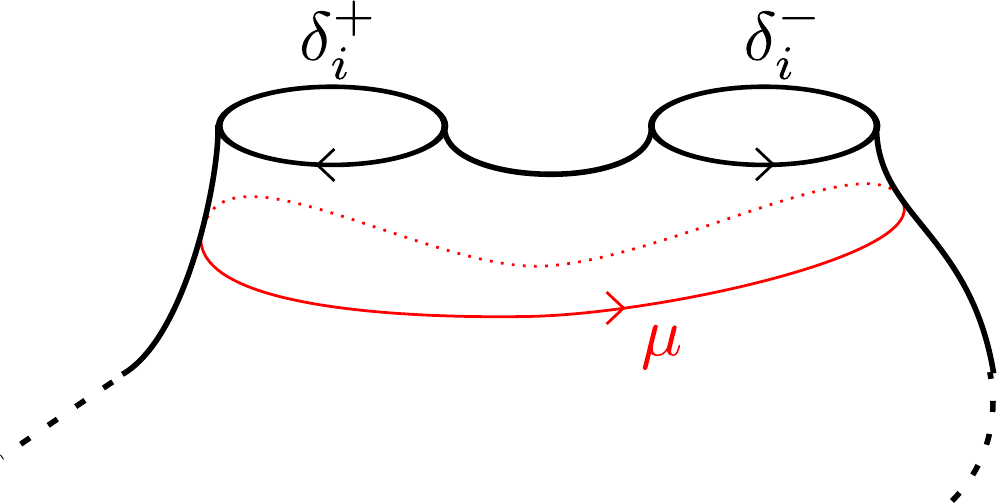}
  \caption{The construction in the proof of Lemma~\ref{lem:sign-swaps}.}
  \label{fig:sign-flips}
\end{figure}
  Since $\delta_i^+, \delta_i^-$ are (compatibly oriented) lifts of the same meridian,
  there is some element $h \in H$ so that $\delta_i^- = h\delta_i^+$. This already
  implies
  \[  i(\delta_i^+, \alpha) = \pm i(\delta_i^-, \alpha) \]
  by the above. We have to exclude the positive sign. However, note that there is a
  simple closed meridian $\mu$ in $Y'$ which is homologous to $[\delta_i^+] - [\delta_i^-]$
  (compare Figure~\ref{fig:sign-flips}).
  If in the previous equation the sign is positive, this meridian has algebraic intersection number
  $0$ with $\alpha$, therefore lifts with degree $1$ to $\Sigma'$. Being a meridian in $Y'$ it
  is also a degree $1$ lift of a simple closed meridian for $V$. This shows the lemma.
\end{proof}
\begin{lemma}\label{lem:induct-intersect}
  There is a meridian for $V$ which lifts with degree $1$ to $\Sigma'$.
\end{lemma}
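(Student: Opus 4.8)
The plan is to argue by contradiction. Assuming that no meridian for $V$ lifts with degree $1$ to $\Sigma'$, the sign-swap alternative of Lemma~\ref{lem:sign-swaps} holds, and I will exhibit a single meridian for $V$ whose elevation to $\Sigma'/\langle m\rangle$ has algebraic intersection number $0$ with the curve $\alpha$ from property ii). By property ii) this elevation then lifts with degree $1$ to $\Sigma'$, so the meridian itself lifts with degree $1$ to $\Sigma'$, contradicting the assumption. To set up, I fix a reduced disk system $\delta_1,\ldots,\delta_g$ for $V$ with chosen orientations, let $Y$ be the complementary $2g$--holed sphere, and let $Y'\subset \Sigma'/\langle m\rangle$ be the homeomorphic lift provided by Lemma~\ref{lem:sign-swaps}, with boundary components $\delta_1^\pm,\ldots,\delta_g^\pm$. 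Since $Y'\to Y$ is a homeomorphism, any simple closed curve in $Y'$ is the degree--$1$ elevation of a curve in $Y$; as every curve in $Y$ is disjoint from the reduced disk system, it is a meridian, and by property i) its full preimage in $\Sigma'/\langle m\rangle$ is this single elevation.

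The key construction is the simple closed curve $c\subset Y$ that separates one boundary circle from each $\delta_i$ (the circles $\delta_1^+,\ldots,\delta_g^+$ upstairs) from the remaining $g$ boundary circles. Cutting $Y$ along $c$ yields two planar pieces, and re-gluing the two sides of each $\delta_i$ in $\Sigma$ reconnects them along the $\delta_i$; hence $\Sigma\setminus c$ is connected, so $c$ is non-separating, and in particular an essential, non-peripheral meridian. Its elevation $\tilde c\subset Y'$ encloses precisely $\delta_1^+,\ldots,\delta_g^+$, so in $H_1(\Sigma'/\langle m\rangle)$ one has $[\tilde c]=\pm\sum_i[\delta_i^+]$ in boundary orientations.

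It then remains to show $i(\tilde c,\alpha)=0$. Since $\partial Y'$ bounds $Y'$, the total relation $\sum_i\big(i(\delta_i^+,\alpha)+i(\delta_i^-,\alpha)\big)=0$ holds in boundary orientations. The two lifts of each $\delta_i$ carry opposite boundary orientations relative to the fixed orientation of $\delta_i$, and combining this with the sign-swap $i(\delta_i^+,\alpha)=-i(\delta_i^-,\alpha)$ of Lemma~\ref{lem:sign-swaps} (which is stated in compatible orientations) shows that $\delta_i^+$ and $\delta_i^-$ contribute the \emph{same} value $A_i$ in boundary orientations. The total relation then becomes $2\sum_i A_i=0$, so $\sum_i A_i=0$, whence $i(\tilde c,\alpha)=\pm\sum_i A_i=0$. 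By property ii) this means $\tilde c$ lifts with degree $1$ to $\Sigma'$, producing a meridian for $V$ lifting with degree $1$ and the desired contradiction.

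I expect the main obstacle to be the orientation bookkeeping in the last step: one must carefully reconcile the \emph{compatible} orientations in which Lemma~\ref{lem:sign-swaps} is phrased with the \emph{boundary} orientations natural to the planar surface $Y'$, since it is precisely the interaction of the global relation $[\partial Y']=0$ with the per-handle sign-swap that forces $i(\tilde c,\alpha)=0$ (the sign-swap alone only controls each handle, while the planar relation alone does not eliminate the $A_i$). A secondary point needing care is confirming that $c$ is genuinely essential and non-peripheral in $\Sigma$, which is exactly what the non-separation argument supplies, so that $c$ is an honest meridian rather than a trivial or boundary-parallel curve.
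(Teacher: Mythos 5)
Your proof is correct, and it takes a genuinely different route from the paper's. Both arguments start from the same data --- a reduced disk system $\delta_1,\ldots,\delta_g$, the homeomorphic lift $Y'\subset \Sigma'/\langle m\rangle$ of its complement $Y$, Lemma~\ref{lem:sign-swaps}, and property ii) --- but the paper runs a descent: assuming every $i(\delta_i^\pm,\alpha)$ is nonzero and $i(\delta_1^+,\alpha)>0$ is minimal, it constructs a pair-of-pants curve $\mu'$ homologous to $[\delta_1^+]+[\delta_2^+]$, performs a disk-system exchange move replacing $\delta_2$ by the corresponding meridian $\mu$, re-applies Lemma~\ref{lem:sign-swaps} to the modified lift, and iterates, producing meridians with intersection numbers $i(\delta_2^+,\alpha)+k\,i(\delta_1^+,\alpha)$ until a Euclidean-algorithm/minimality argument yields $0$. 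You instead let a single curve do all the work: the meridian $c\subset Y$ separating $\delta_1^+,\ldots,\delta_g^+$ from $\delta_1^-,\ldots,\delta_g^-$ (essential because $\Sigma\setminus c$ is connected, and a meridian because it is disjoint from the reduced disk system), whose elevation satisfies $[\tilde c]=\pm\sum_i[\delta_i^+]$ in boundary orientations; pairing the relation $[\partial Y']=0$ with $\alpha$ and applying the sign swap to each pair $\delta_i^\pm$ then forces $i(\tilde c,\alpha)=0$, and property ii) finishes. In effect your $c$ is the sum of all of the paper's pair-of-pants curves taken at once, and the global relation $[\partial Y']=0$ replaces the paper's induction; this buys a one-shot proof with no termination bookkeeping (the least transparent part of the paper's argument), at the cost only of the orientation bookkeeping, which you identify and carry out correctly. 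Two harmless slips: Lemma~\ref{lem:sign-swaps} assumes rather than provides the lift $Y'$ (its existence follows from property i), since $\pi_1(Y)$ is generated by loops freely homotopic to the meridians $\delta_i$); and the full preimage in $\Sigma'/\langle m\rangle$ of a curve in $Y$ is not a single elevation but one copy per element of the deck group --- you only need that the component $\tilde c\subset Y'$ maps to $c$ with degree $1$, which the homeomorphism $Y'\to Y$ supplies.
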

\begin{proof}
  Let $\delta_1, \ldots, \delta_g$ be a reduced disk system for $V$,
  and let $Y$ be the complementary subsurface. Choose orientations on
  $\delta_i$. Let $Y' \subset \Sigma'/\langle m \rangle$ be a
  (homeomorphic) lift of $Y$, and let $\delta_i^+, \delta_i^-$ be
  lifts of $\delta_i$ on the boundary of $Y'$, oriented compatibly
  with the orientation of $\delta_i$. If any
  $i(\delta_i^\pm,\alpha)=0$, we are done. Otherwise, assume that
  $i(\delta_1^+,\alpha)>0$ and minimal among all $\delta^\pm_i$.

  Now, note that (up to possibly swapping orientation of $\delta_2$)
  there is a curve $\mu'$ in $Y'$ which is homologous to
  $[\delta_2^+] + [\delta_1^+]$. This curve $\mu'$ is a degree $1$ lift
  of a meridian $\mu$ for $V$, and we have
  \[ i(\mu, \alpha) = i(\delta_2^+, \alpha) + i(\delta_1^+,\alpha). \]
  Perform an disk system exchange move, replacing $\delta_2$ by $\mu$. Similarly,
  we modify $Y'$ by removing the pair of pants bounded by $\delta_1^+, \delta_2^+, \mu'$
  and adding a pair of pants at $\delta_2^-$, whose boundary components are
  other lifts $h_1\delta_1^+, h_2\mu'$ of $\delta_1, \mu$. Applying Lemma~\ref{lem:sign-swaps}
  twice, both for $Y'$ and its modification, we have
  \[ i(\delta_1^+, \alpha) = - i(\delta_1^-,\alpha) = i(h_1\delta_1^+, \alpha) \]
  Thus, repeating the argument, with $\mu$ in place of $\delta_2$, we can find 
  a meridian $\nu$, with lift $\nu'$ so that 
  \[ i(\nu, \alpha) = i(\delta_2^+, \alpha) + 2(\delta_1^+,\alpha). \]
  By induction, and since $\delta_1$ was chosen to minimize
  intersection with $\alpha$, after finitely many steps we will have
  found a meridian with $i(\nu, \alpha) = 0$, which is the desired
  one.
  \begin{figure}
  \centering
  \includegraphics[width=0.9\textwidth]{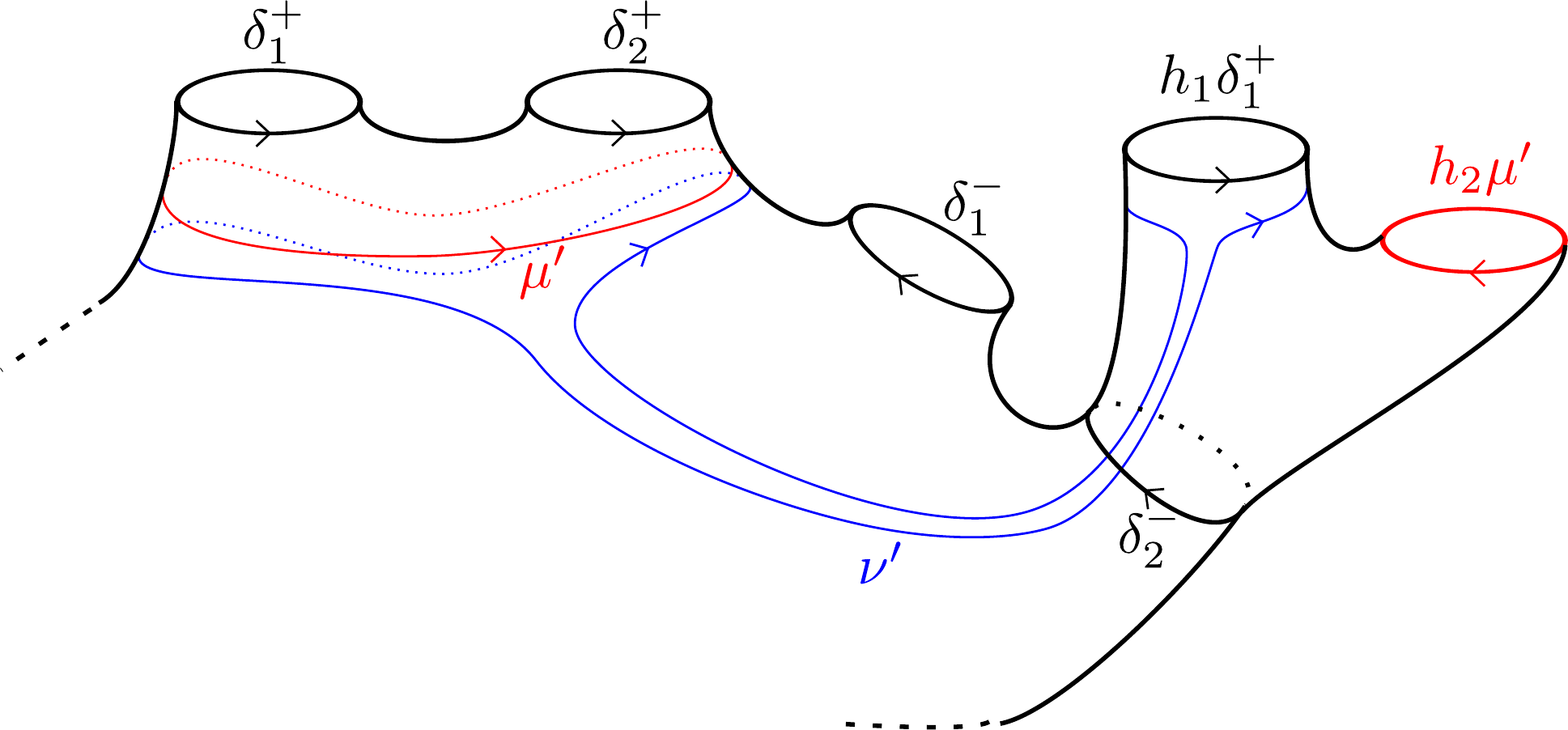}
  \caption{The construction in the proof of Lemma~\ref{lem:induct-intersect}.}
  \label{fig:induct-intersect}
\end{figure}
\end{proof}
With Corollary~\ref{cor:one-is-enough}, this is enough to finish the proof of Theorem~\ref{thm:covers-main}.

\bibliographystyle{math}
\bibliography{hbrig}

\end{document}